\useunder{\uline}{\ul}{}
\definecolor{webgreen}{rgb}{0,.5,0}
\definecolor{webbrown}{rgb}{.6,0,0}
\theoremstyle{plain}
\newtheorem{theorem}{Theorem}
\newtheorem{corollary}[theorem]{Corollary}
\newtheorem{res}[theorem]{Result}
\newtheorem{conj}[theorem]{Conjecture}
\newtheorem{lemma}[theorem]{Lemma}
\newtheorem{proposition}[theorem]{Proposition}
\theoremstyle{definition}
\newtheorem{definition}[theorem]{Definition}
\newtheorem{example}[theorem]{Example}
\theoremstyle{remark}
\newtheorem{remark}[theorem]{Remark}
\newcommand{\Al}{\mathcal{A}}
\newcommand{\Cech}{\v{C}ech{} }
\newcommand{\seqnum}[1]{\href{http://oeis.org/#1}{\underline{#1}}}
\begin{document}

\begin{center}
\vskip 1cm
{\LARGE\bf Computations for Symbolic Substitutions}\\
\vskip 1cm
\large
Scott Balchin\\
Department of Mathematics \\ 
University of Leicester \\ 
University Road, Leicester, LE1 7RH \\ 
United Kingdom\\
\href{mailto:slb85@le.ac.uk}{\tt slb85@le.ac.uk} \\
\ \\
Dan Rust\\
Department of Mathematics \\ Universit\"{a}t Bielefeld \\ 
Bielefeld, Universit\"{a}tsstra{\ss}e, D-33615 \\
Germany\\
\href{mailto:drust@math.uni-bielefeld.de}{\tt drust@math.uni-bielefeld.de}\\
\end{center}

\begin{abstract}

We provide a survey of results from symbolic dynamics and algebraic topology relating to Grout, a new user-friendly program developed to calculate combinatorial properties and topological invariants of a large class of symbolic substitutions. We study their subshifts (and related spaces) with an emphasis on examples of computations. We implement a check to verify that no counterexample exists to the so-called ``strong coincidence conjecture'' for a large number of substitutions on three and four letters.
\end{abstract}

\section{Introduction}
	Aperiodic sequences have been well-studied throughout the 20th century, with \emph{substitution systems} being an important subclass in their study since the work of Thue \cite{T:thue-morse}. These are sequences generated from an iterated replacement rule on a finite alphabet that assigns finite words to each letter. Substitutions are also called \emph{morphisms} in the literature. There are several hundred entries containing the term \emph{morphism} in the Online Encyclopedia of Integer Sequences \cite{OEIS}. These sequences are of great interest to the field of combinatorics on words and have been extensively studied \cite{F:book}.
	
	Today, aperiodic order is a topic of general interest to the study of tiling theory, dynamical systems, topology, Diophantine approximation, ergodic theory, computer graphics, mathematical physics and even virology \cite{CH:codim-one-attractors, S:book, BS:beta-numerations, BG:book, L:comp-sci-book, BHZ:gap-labeling, T:virology-tilings}. Once it was discovered by Berger \cite{B:undecidability} that sets of tiles exist which can only tile the plane aperiodically, a flurry of advances quickly followed, culminating in the celebrated discovery of the famous \emph{Penrose tilings} \cite{P:pentaplexity} and in the discovery of quasicrystals \cite{S:nobel} for which Schectman was awarded the Nobel prize in Chemistry in 2011. However, quite apart from the higher-dimensional setting, there is still much that is not well-understood about the low-dimensional case of bi-infinite sequences. In many respects, substitution systems provide the simplest examples of aperiodic sequences.

	The seminal work of Anderson and Putnam \cite{AP} has led to a new algebraic topological approach to the study of aperiodic sequences and tilings associated with substitutions. It is this style of approach, and subsequent related methods for bi-infinite sequences, which we address here. In this setting, 1-dimensional tilings are combinatorially equivalent to bi-infinite sequences over an alphabet whereby letters from the alphabet are assigned to distinct tiles. Informally, the symbolic dynamicist takes the position that (in some instances) a sequence $s$ can be better understood by taking $s$ in concert with all other sequences which are \emph{locally isomorphic} to $s$. One then provides a suitable metric or topological structure to this space of sequences. A simple dynamical system on such a space, given by the \emph{shift map}, is then studied. Combinatorial properties of the original sequence $s$ translate to dynamical properties of this space of sequences. We can in turn translate these dynamical properties of the sequence space into topological properties of a related space---its so-called \emph{tiling space}. Algebraic invariants of tiling spaces are hence important for combinatorially distinguishing aperiodic sequences.
	
	There are still long-standing conjectures concerning the dynamics for symbolic substitutions which need to be settled; principal among them being the \emph{Pisot conjecture} \cite{ABBL:pisot} and related problems. It is hoped that the use of new programs will make testing conjectures in tiling theory and symbolic substitutional dynamics more efficient, as well as allowing for the confirmation of hand calculations and comparing different methods of calculation. In this direction, we present here supporting material for a program (\emph{Grout}) developed to calculate some of these properties (especially methods of calculating cohomology). Analysis of large data sets which can be potentially generated by the Grout source code, and the recognition of underlying patterns in the data may also aid to further the theory.
	
	Grout has been designed with user experience in mind and includes many ease-of-use tools such as the ability to save and load examples, and convenient methods of sharing examples with other users via short strings that encode a substitution. There is also an option to export all of the data that has been calculated to a pre-formatted \LaTeX{} file, including all the TikZ code for the considered complexes.  This is useful to those needing to typeset such diagrams by fully automating the generation of diagrams in TikZ.

In Section \ref{SEC:background}, we introduce basic notions of subshifts (sequence spaces) and tiling spaces associated with substitutions on finite alphabets. In Section \ref{SEC:main}, we introduce the relevant theory from symbolic dynamics. In this section we also include a new fully deterministic check for recognizability for primitive substitutions. The algorithm is simple enough to check by hand in a reasonable amount of time for small substitutions. Section \ref{SEC:cohomology} will cover specifically those methods implemented to compute cohomology for tiling spaces. Throughout, we give instances of these methods being applied to well-known example substitutions. In Section \ref{SEC:pisot} we give a brief overview of results relating to the \emph{Pisot conjecture} and the \emph{strong coincidence conjecture}. We focus our efforts on applying the functions developed for Grout to the study of Pisot substitutions. In particular, we perform a search for counterexamples to the strong coincidence conjecture.

Section \ref{SEC:pisot} is the only section in which original results appear (with the exception of the recognizability check). The preceding sections are meant as a survey of results in order to familiarize the reader with the types of calculations that Grout has been developed to compute.

	\section{Background}\label{SEC:background}
	Most of the definitions that follow can be found in any standard reference text on symbolic dynamics. For one such text, we refer the reader to the book of Fogg \cite{F:book}.
		\begin{definition}
			Let $\mathcal{A}$ be a finite alphabet on $l$ letters (also called symbols). An alphabet $\Al$ may be a set of integers (e.g., $\{0,1,2, \ldots, (l-1)\}$) or a set of letters (e.g., $\{a,b,c\}$). For all positive integers $n$, let $\mathcal{A}^n$ be the $n$-fold cartesian product of the alphabet $\mathcal{A}$ with itself. An \emph{$n$-letter word} is an element of $\mathcal{A}^n$. We formally identify an $n$-letter word, which is an $n$-tuple $(a_1, \ldots, a_n)$ of letters from the alphabet $\mathcal{A}$, with the finite string of letters $a_1 \cdots a_n$. We let $\mathcal{A}^+$ denote the union of the sets of $n$-letter words $\bigcup_{n \geq 1} \mathcal{A}^n$. Further, if the empty word $\epsilon$ is included, we let $\mathcal{A}^\ast$ denote the union $\mathcal{A}^+ \cup \{\epsilon\}$. For $w \in \mathcal{A}^n$, we say that $n$ is the \emph{length} of the word $w$ and write $|w|=n$. The empty word $\epsilon$ has length $|\epsilon| = 0$.
			
			Let $a_1, \ldots, a_n, b_1, \ldots, b_m \in \mathcal{A}$, $w \in \mathcal{A}^*$. \emph{Concatenation} of finite words is a binary operation $\bullet \colon \mathcal{A}^* \times \mathcal{A}^* \to \mathcal{A}^*$ defined by $a_1 \cdots a_n \bullet b_1 \cdots b_m = a_1 \cdots a_n b_1 \cdots b_m$ and $\epsilon \bullet w = w \bullet \epsilon = w$. As concatenation is associative, we will omit this formal notation, and without loss of generality write $(uv)w = u(vw) = uvw$ for words $u,v,w \in \mathcal{A}^*$.
			
			A \emph{substitution} $\phi$ on $\mathcal{A}$ is a function $\phi \colon \mathcal{A} \to \mathcal{A}^+$ which assigns a non-empty word $\phi(a) \in \mathcal{A}^+$ to each letter $a \in \mathcal{A}$. We extend $\phi$ to a function $\phi \colon \mathcal{A}^+ \to \mathcal{A}^+$ by concatenation; given a word $w = w_1 \cdots w_n \in \mathcal{A}^n$, we set $\phi(w) = \phi(w_1) \cdots \phi(w_n)$. In this way, we can consider finite positive powers of the substitution $\phi$ acting on $\mathcal{A}^+$. We define $\phi^1 = \phi$ and $\phi^{p+1} = \phi \circ \phi^{p}$.
			
			An infinite sequence $s$ over the alphabet $\mathcal{A}$ is a function $s \colon \mathbb{N} \to \mathcal{A}$. We write $s$ sequentially as $s = s_0 s_1 s_2 \cdots$.
			A bi-infinite sequence $s$ over the alphabet $\mathcal{A}$ is a function $s \colon \mathbb{Z} \to \mathcal{A}$. We write $s$ sequentially as $s = \cdots s_{-1} \cdot s_0 s_1 \cdots$ where a single dot $\cdot$ is used to mark the position left of the \emph{origin} $s(0) = s_0$.
			
			If $s$ is an infinite sequence $s = s_0 s_1 s_2 \cdots$, or a bi-infinite sequence $s = \cdots s_{-1}\cdot s_0 s_1 \cdots$, then we define the shifted sequence $\sigma(s)$ to be given by $\sigma(s)_n = s_{n+1}$.
		
		We say that a word $w$ is a \emph{factor} of the word $w'$ if there exist words $u$ and $v$ (possibly empty) such that $u w v = w'$. Similarly, the word $w$ of length $|w| = n$ is a factor of the bi-infinite sequence $s$ if there exists an integer $k$, such that $w = s_{k+1} s_{k+2}\cdots s_{k+n}$.
		\end{definition}

		Some prefer to extend the definition of a substitution to maps whose codomain is $\Al^*$, where the empty word $\epsilon$ is allowed to be the image of a letter under substitution. However, such substitutions are difficult to work with in our setting (and a further standing assumption to be introduced will negate this extended definition). So, for our purposes, we shall always require that the image of a letter under a substitution is non-empty.

		\begin{definition}
			Let $\phi \colon \mathcal{A} \to \mathcal{A}^+$ be a substitution. We say a word $w \in \mathcal{A}^\ast$ is \emph{admitted} by the substitution $\phi$ if there exists a letter $i \in \mathcal{A}$ and a natural number $p \geq 0$ such that $w$ is a factor of $\phi^p(i)$. We let $\mathcal{L}^n_\phi \subset \mathcal{A}^n$ denote the set of all words of length $n$ admitted by $\phi$. We form the \emph{language} of $\phi$ by taking the set of all admitted words $\mathcal{L}_\phi = \bigcup_{n \geq 0} \mathcal{L}^n_\phi$.
			
			We say a bi-infinite sequence $s \in \mathcal{A}^\mathbb{Z}$ is \emph{admitted} by $\phi$ if every factor of $s$ is admitted by $\phi$. We let $X_\phi$ denote the set of all bi-infinite sequences admitted by $\phi$. The set $X_\phi$ has a natural (metric) topology inherited from the product topology on $\mathcal{A}^\mathbb{Z}$ and a natural shift map $\sigma \colon X_\phi \to X_\phi$ given by $\sigma(s)_i= s_{i+1}$. We call the pair $(X_\phi, \sigma)$ the \emph{subshift} associated with $\phi$ and we will often abbreviate the pair to just $X_{\phi}$ when the context is clear.
		\end{definition}
		
		An infinite sequence of words $u_0, u_1, \ldots$ is \emph{nested to the right} (or simply \emph{nested} when context is clear) if, for each $n \geq 0$, there exists a word $v_n$ such that $ u_{n+1} = u_n v_n$, where $v_n$ may be the empty word.
		\begin{example}\label{EX:fibonacci}
		Let $\Al = \{0,1\}$ and let $\phi \colon \Al \to \Al^+$ be given by $\phi \colon 0 \mapsto 01, 1 \mapsto 0$. By iterating $\phi$ on the initial seed letter $0$ we get a nested sequence of words:
		$$0 \mapsto 01 \mapsto 010 \mapsto 01001 \mapsto 01001010 \mapsto 0100101001001 \mapsto \cdots, $$
		whose limit sequence is the \emph{Fibonacci word} \seqnum{A003849} \cite{F:book}, (see also \seqnum{A096270}):
		$$010010100100101001010010010100100101001010010010100 \cdots.$$		
		For the purpose of studying the combinatorics and dynamics of such sequences, we do not distinguish this sequence from the version on the alphabet $\{1,2\}$ given by $1\mapsto 12, 2\mapsto 1$ \seqnum{A003842} or on the alphabet $\{a,b\}$ given by $a\mapsto ab, b\mapsto a$, etc.
		\end{example}
		\begin{example}\label{EX:thue-morse}
		Let $\Al = \{0, 1\}$ and let $\phi \colon \Al \to \Al^+$ be given by $\phi \colon 0 \mapsto 01, 1 \mapsto 10$. In this case, the limit sequence given by repeated substitution on the seed letter $0$ is known as a \emph{Thue-Morse sequence} \seqnum{A010060} \cite{P:thue-morse} (see also \seqnum{A010059}, \seqnum{A001285}):
		$$011010011001011010010110011010011001011001101001011 \cdots.$$
		\end{example}
		Note that, in general, a sequence of substituted iterates of a letter does not necessarily form a nested sequence of words extending to the right. However, if the substitution is suitably nice (if there exists an element $a$ of the alphabet, a natural number $n \geq 1$ and a non-empty word $u$ such that $\phi^n(a) = au$), then there exists a finite power of the substitution and some seed letter in the alphabet for which this property does hold. Infinite sequences of letters which are limits of such nested sequences of words whose lengths grow without bound are called \emph{fixed points} of the substitution. Bi-infinite analogues also exist \cite{SW:bi-infinite}, which we study here.
		
		\begin{definition}
Let $\mathcal{A} = \{0, \ldots, (l-1)\}$ be an alphabet with a substitution $\phi \colon \mathcal{A} \to \mathcal{A}^+$. Then $\phi$ has an associated \emph{substitution matrix} $M_\phi$ of dimension $l \times l$ given by setting the entry $m_{ij}$ to be the number of times the letter $i$ appears in the substituted word $\phi(j)$.
\end{definition}
\begin{example}\label{EX:period-doubling}
Let $\Al = \{0,1\}$ and let $\phi \colon \Al \to \Al^+$ be given by $\phi \colon 0 \mapsto 01, 1\mapsto 00$, whose limit sequence with seed $0$ is the \emph{period doubling sequence} \seqnum{A096268} \cite{D:period-doubling}, (see also \seqnum{A035263}):
$$01000101010001000100010101000101 \cdots.$$
This substitution has associated substitution matrix
		$M_\phi =
		\begin{bmatrix}
		1 & 2 \\
		1 & 0
		\end{bmatrix}$.
\end{example}

The first property that we check for a substitution matrix is primitivity. Other properties will be introduced in Section \ref{SEC:matrix}.

\begin{definition}
A substitution $\phi \colon \mathcal{A} \to \mathcal{A}^+$ is called \emph{primitive} if there exists a positive natural number $p$ such that the matrix $M^p_\phi$ has strictly positive entries. Such a matrix $M_\phi$ is also called \emph{primitive}.  This condition is equivalent to having a positive natural number $p$ such that for all $i,j \in \mathcal{A}$ the letter $j$ appears in the word $\phi^p(i)$.
\end{definition}

It is an easy consequence of the definition that any primitive substitution admits a letter $a \in \mathcal{A}$, a natural number $p \geq 1$ and a word $u$ such that $\phi^p(a) = au$. In particular, all primitive substitutions admit a power $p$, and a seed letter $a$, such that the sequence of words $a, \phi^p(a), \phi^{2p}(a), \ldots$ is nested. Note that primitivity is only a sufficient condition for this nesting property to hold, and not necessary.

Checking primitivity of a substitution is algorithmically simple: consider the sequence of matrices $M_\phi, M_\phi^2, M_\phi^3, \ldots$; as the number of zeros appearing as entries in the matrices of this sequence is non-increasing and will remain constant as soon as no difference occurs between steps, either the number of zeros in $M_\phi^k$ and $M_\phi^{k+1}$ are identical and positive for some $k$, in which case $\phi$ is not primitive, or else there exists a power $p$ such that $M_\phi^p$ has all positive entries, in which case $\phi$ is primitive. Exactly one of these events occurs and in finite time.

All examples which have been presented so far are easily checked to be primitive.
\begin{example}\label{EX:chacon}
Let $\Al = \{0,1\}$ and let $\phi \colon \Al \to \Al^+$ be given by $\phi \colon 0 \mapsto 0010, 1 \mapsto 1$, whose limit sequence with seed $0$ is the \emph{non-primitive Chacon sequence} \seqnum{A049320} \cite{C:chacon-sub}:

$$0010 0010 1 0010 0010 0010 1 0010 1 0010 0010 1 0010 \cdots.$$

As the name suggests, this particular substitution that generates the non-primitive Chacon sequence fails to be primitive as its substitution matrix $M_\phi = \left[\begin{smallmatrix}3 & 0 \\ 1 & 1\end{smallmatrix}\right]$ is lower triangular. Hence, $M^p_{\phi}$ has a zero entry for all $p \geq 1$.

Although they are very closely related, the non-primitive Chacon sequence should not be confused with the \emph{primitive Chacon sequence} \seqnum{A049321} \cite{F:chacon}:

$$0012 0012 1 2012 0012 0012 1 2012 1 2012 0012 1 2012 \cdots$$

given as the limit sequence of the substitution $\phi \colon 0 \mapsto 0012, 1 \mapsto 12, 2 \mapsto 012$ with seed $0$. This substitution \emph{is} primitive.
\end{example}

Primitivity is a standing assumption that we ask all considered substitutions to satisfy. For many reasons, non-primitive substitutions are difficult to work with and Grout cannot be used to calculate properties of non-primitive substitutions. The study of non-primitive substitutions via their tiling spaces has only recently been seriously considered \cite{MR:non-primitive}, and we are not aware of any results in the literature concerning the tiling spaces associated with substitutions with letters that have empty image.

\subsection{The subshift and tiling space}
			The subshift associated with a substitution is one of the key objects with which we are concerned. We say $\phi$ is a \emph{periodic} substitution if $X_\phi$ is finite, and $\phi$ is non-periodic otherwise. Equivalently, $\phi$ is periodic if $X_\phi$ contains only periodic sequences, and $\phi$ is non-periodic if $X_\phi$ contains at least one non-periodic sequence (whereas a bi-infinite sequence $s$ is periodic if there exists a non-zero integer $n$ such that $\sigma^n(s)=s$). We say $\phi$ is \emph{aperiodic} if $X_\phi$ contains no periodic shift-invariant subspaces. Equivalently, $\phi$ is aperiodic if $X_\phi$ contains no periodic sequences. We say $\sigma$ is a \emph{minimal} action on $X_\phi$ if the only closed shift-invariant subsets of $X_\phi$ are the empty set $\emptyset$ and the subshift itself $X_\phi$. Equivalently, $\sigma$ is minimal if the orbit of every point under $\sigma$ is dense in $X_\phi$. We say that $X_\phi$ is \emph{minimal} if the shift $\sigma$ is a minimal action on $X_\phi$.
			
			If $\phi$ is non-periodic and primitive, then $\phi$ is automatically aperiodic (this follows from Proposition \ref{PROP:minimal} below). If $\phi$ is aperiodic, then $X_\phi$ is a Cantor set and $\sigma$ is a \emph{minimal} action on $X_\phi$.
			
			If $X_\phi$ is minimal in the sense given above, then one can replace this \emph{internal} definition of a substitution subshift in terms of its language with an \emph{external} definition in terms of the orbit closure of some fixed sequence of the substitution.
			
			Let $A$ be a subset of the metric space $X$. Let $\overline{A}$ denote the closure of $A$. Recall that the closure of $A$ is the intersection of all closed subsets of $X$ which contain $A$. Equivalently, $$\overline{A} = \{ \lim_{n \to \infty} (a_n)_{n \geq 0} \mid (a_n)_{n \geq 0} \in A^\mathbb{N} \}$$ is the collection of all the limit points of $A$.
			\begin{proposition}
			Let $s \in \mathcal{A}^{\mathbb{Z}}$ be a bi-infinite fixed point of the substitution $\phi$ on $\mathcal{A}$. 
			If $X_\phi$ is minimal, then $X_\phi = \overline{\{ \sigma^n(s) \mid n \in \mathbb{Z} \}}$.
			\end{proposition}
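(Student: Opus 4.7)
The plan is to establish two inclusions; minimality of $X_\phi$ will do essentially all of the work, so the only nontrivial task is verifying that $s$ itself lies in $X_\phi$.

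First I would show $s \in X_\phi$. Since $s$ is a bi-infinite fixed point, $\phi^p(s) = s$ for every $p \geq 1$. Given a factor $w$ of $s$, there exists (for any sufficiently large $p$) a factor $u$ of $s$ of bounded length such that $w$ is a factor of $\phi^p(u)$. By the standing primitivity assumption, there exists an integer $m$ such that $u$ is a factor of $\phi^m(i)$ for every letter $i \in \mathcal{A}$. Concatenating substitutions, $w$ is then a factor of $\phi^{p+m}(i)$, so $w$ is admitted. Since every factor of $s$ is admitted, $s \in X_\phi$.

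Next I would note that $X_\phi$ is closed in $\mathcal{A}^\mathbb{Z}$ (admissibility is a condition on finite factors, hence preserved under pointwise limits) and shift-invariant (factors of $\sigma(t)$ are factors of $t$). Therefore $\{\sigma^n(s) \mid n \in \mathbb{Z}\} \subseteq X_\phi$, and passing to the closure in the ambient space (which agrees with closure in $X_\phi$) yields
\[
\overline{\{\sigma^n(s) \mid n \in \mathbb{Z}\}} \subseteq X_\phi.
\]

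For the reverse inclusion I would invoke minimality. The orbit closure $\overline{\{\sigma^n(s) \mid n \in \mathbb{Z}\}}$ is a non-empty closed subset of $X_\phi$, and it is shift-invariant because $\sigma$ is a homeomorphism (so $\sigma$ maps the orbit to itself and extends continuously to the closure). Since $X_\phi$ is minimal, its only non-empty closed shift-invariant subset is $X_\phi$ itself, giving the reverse inclusion and hence equality.

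The only step requiring real care is the first: showing $s \in X_\phi$ depends on primitivity, which allows a uniform bound $m$ such that every factor of $s$ of bounded length appears in $\phi^m(i)$ for every $i$. The remaining two steps are soft point-set topology plus a direct appeal to the definition of minimality.
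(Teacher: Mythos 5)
Your overall route is the same as the paper's: show the orbit of $s$ lies in $X_\phi$, observe that its closure is a non-empty, closed, shift-invariant subset of $X_\phi$, and let minimality force equality. The second and third steps are fine and match the paper essentially verbatim (the paper verifies shift-invariance of the closure with a sequence/continuity argument rather than by citing that $\sigma$ is a homeomorphism, but that is cosmetic).

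The one place you diverge is the first step, where you try to actually prove $s \in X_\phi$; the paper simply asserts that ``the fixed point $s$ is clearly admitted.'' Your argument for this sub-claim has a genuine hole. A factor $w$ of $s$ that straddles the origin is, for large $p$, a factor of $\phi^p(s_{-1}s_0)$, so your auxiliary word $u$ must be taken to be (at least) the two-letter seed $s_{-1}s_0$. But primitivity only guarantees that every \emph{letter} occurs in $\phi^m(i)$ for all $i$; it does not guarantee that a two-letter word occurring in $s$ occurs in some $\phi^m(i)$ --- that is precisely the statement that $u$ is admitted, which is what you are trying to establish, so the step is circular. Indeed, admissibility of a bi-infinite fixed point is not automatic from $\phi(s)=s$ alone: it requires the seed $s_{-1}s_0$ to be an admitted word, which is normally built into the definition of a (legal) bi-infinite fixed point. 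If you want to keep this step honest, either take legality of the seed as part of the hypothesis (as the paper implicitly does) or supply a separate argument that $s_{-1}s_0 \in \mathcal{L}_\phi$; for single-letter $u$ your primitivity argument is correct, and once the seed is known to be admitted the rest of your step goes through.
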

			\begin{proof}
			Let $A = \{ \sigma^n(s) \mid n \in \mathbb{Z} \}$. As the fixed point $s$ is clearly admitted by $\phi$, and $X_\phi$ is closed under the action of the shift map $\sigma$, we see that $A$ is a subset of $X_\phi$. It is also clear that $A$ is non-empty and so $\overline{A}$ is also non-empty. Suppose $x \in \overline{A}$. Then there exists a sequence $(a_n)_{n \geq 0}$ of points $a_n \in A$ whose limit is $x$. Note that the shift map is continuous on $X_\phi$ and so $\lim_{n \to \infty} \sigma(a_n) = \sigma (\lim_{n \to \infty} a_n) = \sigma(x)$. It follows that $\sigma(x)$ is in the closure of $A$ because all the points $\sigma(a_n)$ lie in $A$. This means that $\overline{A}$ is a shift-invariant subset of $X_\phi$. As $\overline{A}$ is the intersection of closed sets, it is also closed, and so by shift-invariance and minimality we must have $\overline{A} = X_\phi$.
			\end{proof}
			The following result is well-known \cite{F:book}.
			\begin{proposition}\label{PROP:minimal}
			If $\phi$ is primitive, then $X_\phi$ is minimal.
			\end{proposition}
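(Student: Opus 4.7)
The plan is to show that the orbit of every point is dense in $X_\phi$, which is the characterization of minimality stated just before the proposition. Fixing an arbitrary $x \in X_\phi$, density of its orbit under $\sigma$ is equivalent---using the cylinder sets as a basis for the product topology inherited from $\mathcal{A}^\mathbb{Z}$---to the assertion that every word $w \in \mathcal{L}_\phi$ occurs as a factor of $x$.

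The key step is an auxiliary uniform recurrence lemma: for every $w \in \mathcal{L}_\phi$ there exists an integer $K$ such that every admitted word $u \in \mathcal{L}_\phi$ with $|u| \geq K$ contains $w$ as a factor. To prove it, fix $w$ and choose $k \geq 0$ and $a_0 \in \mathcal{A}$ so that $w$ is a factor of $\phi^k(a_0)$. Let $p$ be a power witnessing primitivity, so that for every $a' \in \mathcal{A}$ the letter $a_0$ appears in $\phi^p(a')$. Applying $\phi^k$, the block $\phi^k(a_0)$---and in particular the subword $w$---appears as a factor of $\phi^{k+p}(a')$ for every $a' \in \mathcal{A}$. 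Set $\ell := \max_{a \in \mathcal{A}} |\phi^{k+p}(a)|$ and $K := 2\ell$. Now given $u \in \mathcal{L}_\phi$ with $|u| \geq K$, realize $u$ as a factor of some $\phi^n(b)$ with $n \geq k+p$, and decompose $\phi^n(b) = \phi^{k+p}(c_1) \phi^{k+p}(c_2) \cdots \phi^{k+p}(c_m)$ where $c_1 \cdots c_m = \phi^{n-k-p}(b)$. Since each block $\phi^{k+p}(c_j)$ has length at most $\ell$ while $u$ has length at least $2\ell$, the window $u$ must fully contain at least one entire block $\phi^{k+p}(c_j)$, and therefore contains $w$.

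With the lemma in hand, the proposition follows quickly. Given $x \in X_\phi$ and any $w \in \mathcal{L}_\phi$, select any factor $u$ of $x$ of length at least $K$; since every factor of $x$ is admitted by $\phi$, we have $u \in \mathcal{L}_\phi$, and the lemma forces $w$ to appear in $u$ and hence in $x$. Thus the orbit of $x$ under $\sigma$ meets every basic cylinder and is dense in $X_\phi$, which is the desired minimality.

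The main obstacle in this plan is the combinatorial lemma, and specifically the ``block-swallowing'' step that converts the primitivity hypothesis into uniform recurrence: this is where the choice $K = 2\ell$ is essential, so that any window of that length is forced to engulf at least one of the super-letter blocks $\phi^{k+p}(c_j)$. Everything else amounts to unwinding the definitions of $\mathcal{L}_\phi$, $X_\phi$, and the product topology.
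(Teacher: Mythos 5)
The paper offers no proof of this proposition --- it is stated as well-known and attributed to Fogg's book --- so there is no in-paper argument to compare against; your write-up supplies the standard textbook proof, and it is essentially correct. You reduce minimality to the claim that every admitted word occurs as a factor of every element of $X_\phi$ (only this implication is needed, even though you phrase it as an equivalence), and you establish that claim via uniform recurrence of the language: primitivity forces $w$ into every block $\phi^{k+p}(a')$, and a window of length $2\ell$ must swallow at least one whole block of the decomposition $\phi^n(b) = \phi^{k+p}(c_1)\cdots\phi^{k+p}(c_m)$. The one step you assert without justification is that an arbitrary $u \in \mathcal{L}_\phi$ with $|u| \geq K$ can be realized as a factor of some $\phi^n(b)$ with $n \geq k+p$: by definition $u$ is only guaranteed to be a factor of some $\phi^m(i)$ with $m \geq 0$ possibly small, and to raise the exponent you need the further observation (again from primitivity) that $i$ occurs in $\phi^p(i)$, hence $\phi^m(i)$ is a factor of $\phi^{m+tp}(i)$ for every $t \geq 0$. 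With that line added, the argument is complete.
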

		
		\begin{definition}
			Let $\phi$ be a substitution on the alphabet $\mathcal{A}$ with associated subshift $X_\phi$. The \emph{tiling space} associated with $\phi$ is the quotient space $$\Omega_\phi = (X_\phi\times [0,1]) /{\sim},$$ where $\sim$ is the equivalence relation generated by the relation $(s,1)\sim (\sigma(s),0)$.
		\end{definition}
		One should imagine the point $(s,t)\in\Omega_\phi$ as being a partition or tiling of the real line $\mathbb{R}$ into labelled unit-length intervals (called tiles), where the labels are determined by the letters appearing in $s$, as shown in Figure \ref{FIG:tiling}, and the origin of $\mathbb{R}$ is shifted a distance $t$ to the right from the left of the tile labelled by the letter $s_0$. Two tilings $T$ and $T'$ in $\Omega_\phi$ are considered $\epsilon$-\emph{close} in this topology if, after a translate by a distance at most $\epsilon$, the tiles around the origin in $T'-\epsilon$ within a ball of radius $1/\epsilon$ lie exactly on top of the tiles around the origin in $T$ within a ball of the same radius and share the same labels.

\begin{figure}[h]
\centering

\begin{tikzpicture}[node distance=0.5cm]
\filldraw [fill=red, draw=black] (0.0,0.0) rectangle (0.5,0.25);
\filldraw [fill=blue, draw=black] (0.5,0.0) rectangle (1.0,0.25);
\filldraw [fill=blue, draw=black] (1.0,0.0) rectangle (1.5,0.25);
\filldraw [fill=red, draw=black] (1.5,0.0) rectangle (2.0,0.25);

\filldraw [fill=blue, draw=black] (2.0,0.0) rectangle (2.5,0.25);
\filldraw [fill=red, draw=black] (2.5,0.0) rectangle (3.0,0.25);
\filldraw [fill=red, draw=black] (3.0,0.0) rectangle (3.5,0.25);
\filldraw [fill=blue, draw=black] (3.5,0.0) rectangle (4.0,0.25);

\filldraw [fill=blue, draw=black] (4.0,0.0) rectangle (4.5,0.25);
\filldraw [fill=red, draw=black] (4.5,0.0) rectangle (5.0,0.25);
\filldraw [fill=red, draw=black] (5.0,0.0) rectangle (5.5,0.25);
\filldraw [fill=blue, draw=black] (5.5,0.0) rectangle (6.0,0.25);

\filldraw [fill=red, draw=black] (6.0,0.0) rectangle (6.5,0.25);
\filldraw [fill=blue, draw=black] (6.5,0.0) rectangle (7.0,0.25);
\filldraw [fill=blue, draw=black] (7.0,0.0) rectangle (7.5,0.25);
\filldraw [fill=red, draw=black] (7.5,0.0) rectangle (8.0,0.25);

\filldraw [fill=blue, draw=black] (8.0,0.0) rectangle (8.5,0.25);
\filldraw [fill=red, draw=black] (8.5,0.0) rectangle (9.0,0.25);
\filldraw [fill=red, draw=black] (9.0,0.0) rectangle (9.5,0.25);
\filldraw [fill=blue, draw=black] (9.5,0.0) rectangle (10.0,0.25);

\filldraw [fill=red, draw=black] (10.0,0.0) rectangle (10.5,0.25);
\filldraw [fill=blue, draw=black] (10.5,0.0) rectangle (11.0,0.25);
\filldraw [fill=blue, draw=black] (11.0,0.0) rectangle (11.5,0.25);
\filldraw [fill=red, draw=black] (11.5,0.0) rectangle (12.0,0.25);

\filldraw [fill=red, draw=black] (12.0,0.0) rectangle (12.5,0.25);
\filldraw [fill=blue, draw=black] (12.5,0.0) rectangle (13.0,0.25);
\filldraw [fill=blue, draw=black] (13.0,0.0) rectangle (13.5,0.25);
\filldraw [fill=red, draw=black] (13.5,0.0) rectangle (14.0,0.25);

\filldraw [fill=blue, draw=black] (14.0,0.0) rectangle (14.5,0.25);
\filldraw [fill=red, draw=black] (14.5,0.0) rectangle (15.0,0.25);
\filldraw [fill=red, draw=black] (15.0,0.0) rectangle (15.5,0.25);
\filldraw [fill=blue, draw=black] (15.5,0.0) rectangle (16.0,0.25);

\node  (0) at (0.25,0.5) {$0$}; 
\node  (1) [right of=0] {$1$};
\node  (2) [right of=1] {$1$}; 
\node  (3) [right of=2] {$0$}; 

\node  (4) [right of=3] {$1$}; 
\node  (5) [right of=4] {$0$}; 
\node  (6) [right of=5] {$0$}; 
\node  (7) [right of=6] {$1$}; 

\node  (8) [right of=7] {$1$}; 
\node  (9) [right of=8] {$0$}; 
\node  (10) [right of=9] {$0$}; 
\node  (11) [right of=10] {$1$}; 

\node  (12) [right of=11] {$0$}; 
\node  (13) [right of=12] {$1$}; 
\node  (14) [right of=13] {$1$}; 
\node  (15) [right of=14] {$0$};

\node  (16) [right of=15] {$1$};
\node  (17) [right of=16] {$0$};
\node  (18) [right of=17] {$0$};
\node  (19) [right of=18] {$1$};

\node  (20) [right of=19] {$0$};
\node  (21) [right of=20] {$1$};
\node  (22) [right of=21] {$1$};
\node  (23) [right of=22] {$0$};

\node  (24) [right of=23] {$0$};
\node  (25) [right of=24] {$1$};
\node  (26) [right of=25] {$1$};
\node  (27) [right of=26] {$0$};

\node  (28) [right of=27] {$1$};
\node  (29) [right of=28] {$0$};
\node  (30) [right of=29] {$0$};
\node  (31) [right of=30] {$1$};

\end{tikzpicture}

\caption{A portion of the tiling associated with a Thue-Morse sequence.}

\label{FIG:tiling}

\end{figure}
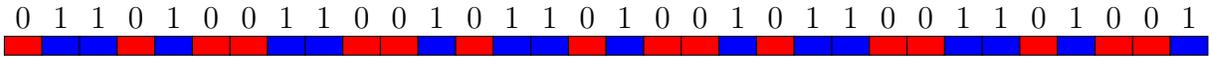
		
		If $\phi$ is primitive and aperiodic, then $\Omega_\phi$ is a compact connected metric space which is nowhere locally connected. The tiling space $\Omega_\phi$ can be loosely seen as a twisted product of a Cantor set with the circle\footnote{In topological language, $\Omega_\phi$ is a fiber bundle over the circle with Cantor set fibers.}. The natural translation $T\mapsto T+t$ for $t\in\mathbb{R}$ equips $\Omega_\phi$ with a continuous $\mathbb{R}$-action which is minimal (in the sense that all $\mathbb{R}$-orbits are dense) as long as $\phi$ is primitive. In this respect, tiling spaces are closely related to the more well-known spaces, the solenoids.
		
		\begin{definition}
		For substitutions $\phi$ and $\eta$, we say that a homeomorphism $f \colon X_\phi \to X_\eta$ is a \emph{topological conjugacy} if $f \circ \sigma = \sigma \circ f$, where $\sigma$ is the shift map on the respective subshifts. If such a topological conjugacy exists, we say that $\phi$ and $\eta$ are \emph{conjugate} substitutions.
		\end{definition}
		\begin{example}
		Any letter-for-letter relabelling of a substitution describes a topological conjugacy. Suppose $\phi$ is the Fibonacci substitution $\phi \colon 0 \mapsto 01, 1 \mapsto 0$ as in Example \ref{EX:fibonacci}. Also consider the substitution $\eta \colon a \mapsto b, b \mapsto ba$. We can describe a rule $0 \mapsto b, 1 \mapsto a$ which extends globally to a topological conjugacy $X_\phi \to X_\eta$.
		\end{example}
		\begin{figure}[H]
\centering
\includegraphics[scale=0.5]{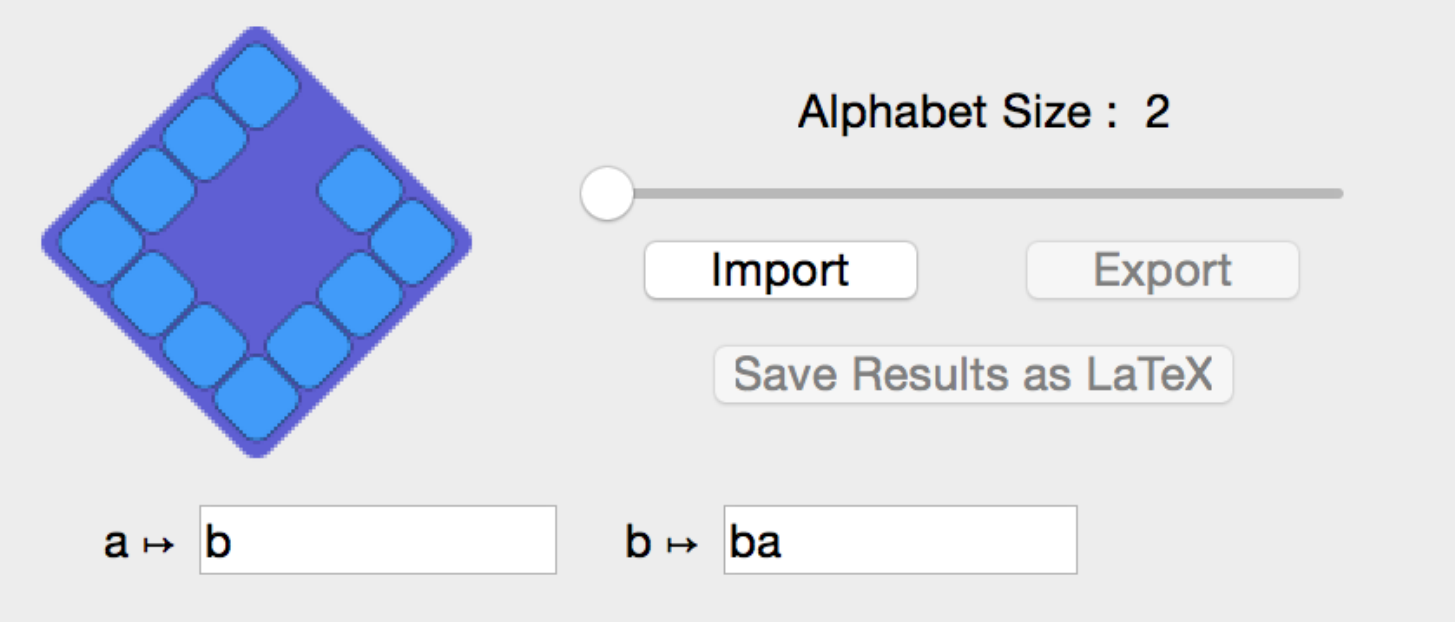}
\caption{The input of the substitution $a \mapsto b, b \mapsto ba$ into Grout.}
		\end{figure}
		Conjugacy is a strong form of equivalence for substitutions and it is an important task to be able to decide when two substitutions are conjugate. The next proposition hints that perhaps an easier object to study is the tiling space associated with a substitution.
		\begin{proposition}
		If $\phi$ and $\eta$ are conjugate, then $\Omega_\phi$ and $\Omega_\eta$ are homeomorphic.
		\end{proposition}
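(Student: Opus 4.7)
The plan is to lift the conjugacy $f \colon X_\phi \to X_\eta$ to a map between the ``cylinders'' $X_\phi \times [0,1]$ and $X_\eta \times [0,1]$, verify compatibility with the equivalence relations, and then descend to a homeomorphism of the quotients.

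First, I would define $F \colon X_\phi \times [0,1] \to X_\eta \times [0,1]$ by $F(s,t) = (f(s), t)$. This is continuous as a product of continuous maps (the identity on $[0,1]$ and $f$ on the Cantor factor). The key compatibility check is that $F$ descends to the quotients: if $(s,1) \sim (\sigma(s), 0)$ in $X_\phi \times [0,1]$, then
\[ F(s,1) = (f(s), 1) \quad \text{and} \quad F(\sigma(s), 0) = (f(\sigma(s)), 0) = (\sigma(f(s)), 0), \]
where the last equality uses $f \circ \sigma = \sigma \circ f$. Since $(f(s),1) \sim (\sigma(f(s)),0)$ in $X_\eta \times [0,1]$, the map $F$ identifies equivalent points. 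By the universal property of the quotient, $F$ factors through a continuous map $\widetilde{F} \colon \Omega_\phi \to \Omega_\eta$.

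Next, I would construct the inverse. Since $f$ is a homeomorphism, $f^{-1} \colon X_\eta \to X_\phi$ is well-defined and continuous, and applying $f^{-1}$ to both sides of $f \circ \sigma = \sigma \circ f$ gives $\sigma \circ f^{-1} = f^{-1} \circ \sigma$, so $f^{-1}$ is itself a topological conjugacy. Running the same construction yields a continuous map $\widetilde{F^{-1}} \colon \Omega_\eta \to \Omega_\phi$. The compositions $\widetilde{F^{-1}} \circ \widetilde{F}$ and $\widetilde{F} \circ \widetilde{F^{-1}}$ are induced from the identity maps on $X_\phi \times [0,1]$ and $X_\eta \times [0,1]$ respectively, so they are the identities on $\Omega_\phi$ and $\Omega_\eta$. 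Hence $\widetilde{F}$ is a homeomorphism.

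There is no real obstacle here; the only slightly subtle point is being careful about the universal property when passing to quotients. If one prefers to avoid invoking universal properties, one can instead observe that $\Omega_\phi$ and $\Omega_\eta$ are compact Hausdorff (as continuous images of the compact Hausdorff spaces $X_\phi \times [0,1]$ and $X_\eta \times [0,1]$), so it suffices to exhibit $\widetilde{F}$ as a continuous bijection, with continuity of the inverse then being automatic.
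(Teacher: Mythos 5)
Your proof is correct and is exactly the argument the paper has in mind when it says the result is ``an immediate consequence of the definition of the tiling space'': the conjugacy $f$ induces $F(s,t)=(f(s),t)$ on the cylinders, the relation $f\circ\sigma=\sigma\circ f$ guarantees compatibility with the gluing, and the same construction applied to $f^{-1}$ gives the inverse. Nothing further is needed.
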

		The proof is an immediate consequence of the definition of the tiling space. The converse is false---as a simple counterexample, consider the periodic substitutions $a \mapsto ab, b \mapsto ab$ and $a \mapsto aab, b \mapsto aab$ whose tiling spaces are both homeomorphic to circles, but whose subshifts do not even have the same cardinality (2 for the former, 3 for the latter).
		
		We will revisit the homeomorphism type of a tiling space and algebraic invariants of such spaces in the section on cohomology, Section \ref{SEC:cohomology}.

\section{Combinatorics and geometry}\label{SEC:main}

\subsection{Substitution matrices and their properties}\label{SEC:matrix}

After primitivity, the next properties that can be calculated from a substitution matrix are the natural tile frequencies and tile lengths of the substitution \cite{S:frequency}.  This requires us to compute the eigenvalues of the matrix and make use of the Perron-Frobenius theorem \cite{LM:introduction-to-symbolic}.

\begin{proposition}[Perron-Frobenius]
	Let $M$ be a primitive matrix.
	\begin{enumerate}
	 \item[(i)] There is a positive real number $\lambda_{PF}$, called the \emph{Perron-Frobenius eigenvalue}, such that $\lambda_{PF}$ is a simple eigenvalue of $M$ and any other eigenvalue $\lambda$ is such that $|\lambda|<\lambda_{PF}$.
	 \item[(ii)] There exist left and right eigenvectors, called the \emph{left and right Perron-Frobenius eigenvectors}, $\mathbf{l}_{PF}$ and $\mathbf{r}_{PF}$ associated with $\lambda_{PF}$ whose entries are all positive and which are unique up to scaling.
	\end{enumerate}

\end{proposition}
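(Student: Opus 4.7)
The plan is to combine a fixed-point argument with the strict positivity consequences of primitivity. First I would establish the existence of a positive eigenvalue with a strictly positive eigenvector. Consider the standard simplex $\Delta = \{x \in \R^l : x_i \geq 0,\ \sum_i x_i = 1\}$. Primitivity ensures no column of $M$ is identically zero, so $Mx \neq 0$ for every $x \in \Delta$, and the map $f(x) = Mx/\|Mx\|_1$ is therefore a continuous self-map of $\Delta$; Brouwer's fixed point theorem produces some $v \in \Delta$ with $Mv = \lambda v$ for $\lambda := \|Mv\|_1 > 0$. Choosing $p$ with $M^p$ strictly positive, the equation $\lambda^p v = M^p v$ forces $v$ to have all positive entries. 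Set $\lambda_{PF} := \lambda$ and $\mathbf{r}_{PF} := v$. Applying the same reasoning to $M^T$ (which is primitive since $(M^T)^p = (M^p)^T$ is strictly positive) produces a strictly positive left eigenvector $\mathbf{l}_{PF}$; the matching of the two eigenvalues follows from $\lambda_{PF}\,\mathbf{l}_{PF}^T \mathbf{r}_{PF} = \mathbf{l}_{PF}^T M \mathbf{r}_{PF}$ together with $\mathbf{l}_{PF}^T \mathbf{r}_{PF} > 0$.

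Next I would prove geometric uniqueness of the eigenvector by a subtraction trick: given any real eigenvector $w$ for $\lambda_{PF}$, after possibly negating, choose $t > 0$ so that $v - tw \geq 0$ componentwise with at least one zero coordinate. Then $M^p(v - tw) = \lambda_{PF}^p(v - tw)$; if $v - tw$ were nonzero, the left-hand side would be strictly positive by primitivity, contradicting the zero on the right. Hence $w$ is a scalar multiple of $v$. Algebraic simplicity is obtained by assuming a generalized eigenvector $w$ with $(M - \lambda_{PF} I) w = c\,\mathbf{r}_{PF}$ for some $c \neq 0$, expanding $M^n w = \lambda_{PF}^n w + n c \lambda_{PF}^{n-1}\,\mathbf{r}_{PF}$, and pairing with $\mathbf{l}_{PF}^T$ to derive the contradiction $\mathbf{l}_{PF}^T \mathbf{r}_{PF} = 0$.

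For the strict spectral gap, let $\mu$ be any eigenvalue with a (possibly complex) eigenvector $u$, and write $|u|$ for the vector of entrywise absolute values. The triangle inequality gives $|\mu|\,|u| = |Mu| \leq M|u|$ componentwise; pairing with $\mathbf{l}_{PF}^T$ yields $|\mu| \leq \lambda_{PF}$. If equality holds, then $M|u| = \lambda_{PF}|u|$, so by geometric uniqueness $|u|$ is a positive multiple of $\mathbf{r}_{PF}$ and in particular strictly positive. Replacing $M$ with $M^p$ throughout, the equality case of the triangle inequality in $|M^p u|_i = (M^p|u|)_i$ forces all the complex numbers $M^p_{ij} u_j$ (for those $j$ with $M^p_{ij} > 0$) to share a common phase; since $M^p_{ij} > 0$ for every pair $(i,j)$, this collapses every $u_j$ to a single common phase, so $u$ is a complex scalar multiple of $\mathbf{r}_{PF}$ and $\mu = \lambda_{PF}$.

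The main obstacle is precisely this last step, which is where primitivity (as opposed to mere irreducibility) is indispensable. For an irreducible but imprimitive matrix the peripheral spectrum is a full cyclic group of $h$-th roots of unity times $\lambda_{PF}$ for some $h > 1$, and a corresponding eigenvector has $|u|$ positive while $u$ itself varies in phase across a cyclic partition of the index set, so the phase-collapsing step genuinely fails. The hypothesis that some $M^p$ is strictly positive is exactly what couples every coordinate to every other and forces all phases to agree, yielding the strict inequality $|\lambda| < \lambda_{PF}$.
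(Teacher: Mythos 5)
Your argument is correct and complete. For the record, the paper does not prove this statement at all: it is quoted as a classical result (the Perron--Frobenius theorem) with a citation to Lind and Marcus, so there is no in-paper proof to compare against. What you have written is essentially the standard Wielandt-style proof, and every step checks out: the Brouwer fixed-point argument on the simplex is legitimate (primitivity does rule out zero columns, since a zero column of $M$ would persist in every power $M^p$); positivity of the eigenvector follows correctly from $M^p v = \lambda^p v$ with $M^p>0$; the subtraction trick gives geometric simplicity; the generalized-eigenvector computation paired against $\mathbf{l}_{PF}^T$ upgrades this to algebraic simplicity; and the phase-collapsing argument in the equality case of the triangle inequality is exactly where strict positivity of $M^p$ (rather than mere irreducibility) is used, as you correctly emphasize in your discussion of the peripheral spectrum of imprimitive matrices. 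The only stylistic remark is that one could replace Brouwer by a more elementary compactness or monotone-iteration argument (e.g., maximizing the Collatz--Wielandt functional $\min_{i:x_i>0}(Mx)_i/x_i$ over the simplex), which avoids invoking a fixed-point theorem, but this is a matter of taste and not a gap.
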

Given the above theorem, it is natural to ask what information is contained in the PF (Perron-Frobenius) eigenvalue and eigenvectors of $M_\phi$ for a primitive substitution $\phi$.

By assigning a length to the tiles labelled by each letter, we hope for such a length assignment to behave well with the given substitution. The left PF eigenvector offers a natural choice of length assignments. If we assign the length $(\mathbf{l}_{PF})_i$ to the letter $i$, the $i$th component of the left PF eigenvector, then we can replace our combinatorial substitution by a geometric substitution---this is sometimes called an \emph{inflation rule} \cite{BG:book}. This geometric substitution expands the tile with label $i$ by a factor of $\lambda_{PF}$ and then partitions this new interval into tiles of prescribed lengths and labels according to the symbolic substitution. In order to give a unique output, Grout normalizes the left PF eigenvector so that the smallest entry is $1$.

The information contained in the right PF eigenvector is also useful. The right PF eigenvector, once normalized so that the sum of the entries is $1$, gives the relative frequencies of each of the letters appearing in any particular bi-infinite sequence which is admitted by $\phi$. That is, if $|w|$ is the length of the word $w$, $|w|_i$ is the number of times the letter $i$ appears in the word $w$, and letting $s_{[-k,k]} = s_{-k} \cdots s_{-1} s_0 s_1 \cdots s_k$, then $$\lim_{k \to \infty} |s_{[-k,k]}|_i/|s_{[-k,k]}| = (\mathbf{r}_{PF})_i$$ for any $s \in X_\phi$.
\begin{example}\label{EX:rudin-shapiro}
Let $\Al = \{0,1,2,3\}$ and let $\phi \colon \Al \to \Al^+$ be given by $\phi \colon 0 \mapsto 01, 1 \mapsto 02, 2 \mapsto 31, 3 \mapsto 32$ whose limit sequence with seed 0 is the fixed point of the \emph{Rudin-Shapiro substitution} \seqnum{A100260} \cite{R:rudin-shapiro}, (see also \seqnum{A073057}, \seqnum{A020985}):
$$0102 0131 0102 3202 0102 0131 3231 0102 \cdots,$$
which is related to the \emph{paper-folding sequence} \seqnum{A014577}, (see also \seqnum{A014709}, \seqnum{A014710}, \seqnum{A106665}).
The substitution matrix for $\phi$ has PF eigenvalue $\lambda_{PF} = 2$ and PF eigenvectors $\mathbf{l}_{PF} = (1,1,1,1)$ and $\mathbf{r}_{PF} = (\frac{1}{4},\frac{1}{4},\frac{1}{4},\frac{1}{4})$. So, from the left PF eigenvector, we assign unit length to all four tiles. From the normalized right PF eigenvector, we can tell that each letter appears with equal frequency of $\frac{1}{4}$.
\end{example}
The PF eigenvalues of a substitution matrix will play a role in Section \ref{SEC:pisot} when we introduce Pisot substitutions. Eigenvalues and entries of eigenvectors are often irrational. Grout displays an approximation of these values, as shown in Figure \ref{FIG:matrix}.

\begin{figure}[h!]
\centering
\includegraphics[scale=0.5]{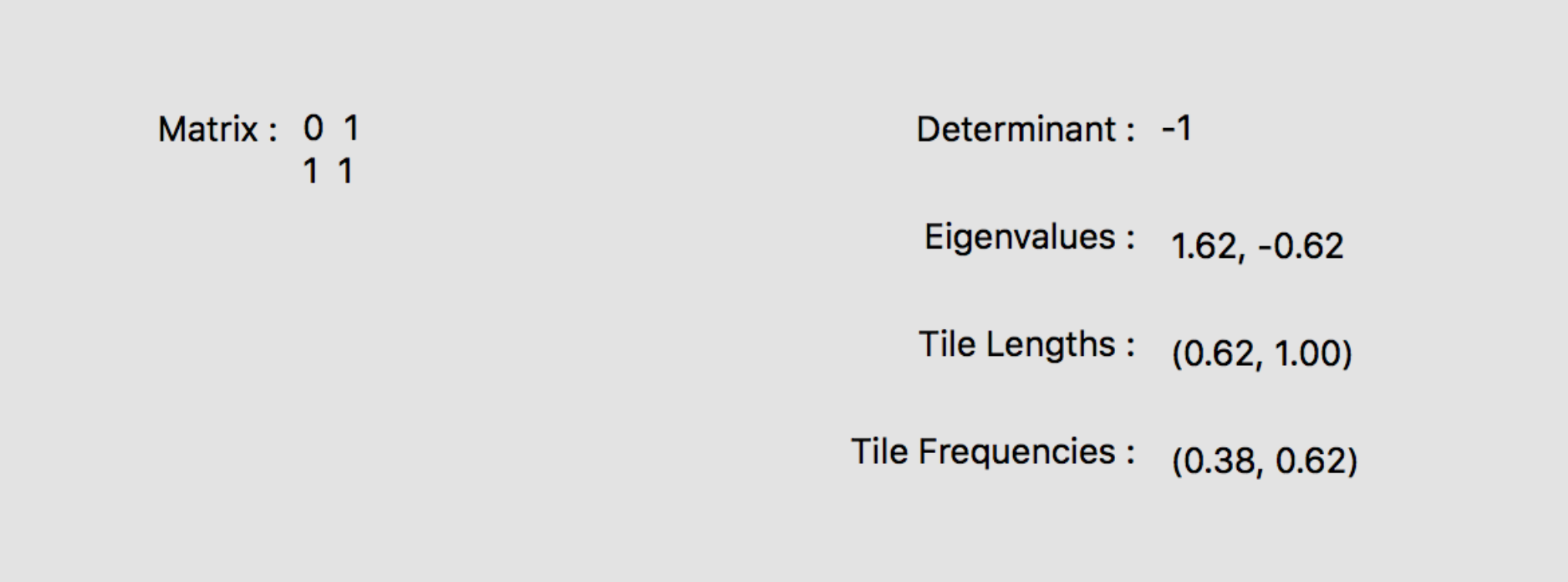}
\caption{The results of matrix calculations in Grout for the substitution $a \mapsto b, b\mapsto ba$.}
\label{FIG:matrix}
\end{figure}

\subsection{Enumerating \texorpdfstring{$n$}{n}-letter words}

Now that we have introduced the basic structure of the substitutions, and discussed the problem of primitivity and other matrix related calculations, we can discuss the first main combinatorial function in Grout.

\begin{definition}
Given a substitution $\phi \colon \mathcal{A} \to \mathcal{A}^+$, we define the \emph{complexity function} at $n$ to be the number of unique $n$-letter words admitted by $\phi$.  We let $p_\phi$ denote this function, and so $p_\phi(n) = |\mathcal{L}^n_\phi|$. If $s$ is a sequence, we similarly let $p_s$ denote the complexity function of $s$, where $p_s(n)$ is the number of unique length-$n$ factors of $s$.

In the literature, one also encounter the terms \emph{subword complexity} or \emph{factor complexity}.
\end{definition}
Clearly, if $\phi$ is primitive, then for all $s \in X_\phi$ we have $p_s(n) = p_\phi(n)$.
\begin{example}
The period doubling sequence from Example \ref{EX:period-doubling} is found to have a complexity function \seqnum{A275202} whose first few terms are $$p_\phi(n) \colon 2, 3, 5, 6, 8, 10, 11, 12, 14, 16, 18, 20, 21, 22, 23, 24, 26, 28, 30, 32, 34, 36, 38, 40, 41, 42, \ldots.$$
\end{example}

Complexity functions are clearly non-decreasing, and for an alphabet $\mathcal{A}$ on $l$ letters, we have $p_s(n) \leq l^n$. The celebrated Morse-Hedlund theorem \cite{MH:complexity} says that a bi-infinite sequence $s$ is periodic if and only if $p_s(n)$ is eventually constant if and only if there exists $n \geq 1$ such that $p_s(n) \leq n$. In particular, for aperiodic sequences one knows that $p_s(n) \geq n+1$ for all natural numbers $n$. The non-periodic sequences which attain this minimal complexity are called the \emph{Sturmian} sequences \cite{F:book}. The complexity function of a sequence is a useful measure of disorder in a sequence. The asymptotic growth rate gives topological and dynamical information about the subshift and tiling space of a sequence \cite{F:complexity} and, in particular, is a topological invariant \cite{J:complexity}.

One is usually interested in either a deterministic formula for $p_\phi$ or information about the asymptotic growth rate of $p_\phi$ such as polynomial degree; Grout can be used to at least give circumstantial evidence for these, though has no means of calculating either (this appears to be a very difficult problem in general). A useful survey article by Ferenczi collates many of the key results on complexity functions \cite{F:complexity2}.

Of particular interest are the number of two-letter and three-letter words admitted by a substitution, as we use them later to compute cohomology.  For this reason, Grout not only enumerates the number of $n$-letter words, but prints out the two-letter and three-letter words as required.

The algorithm to find $n$-letter admitted words begins by generating a seed word\footnote{By primitivity, we can begin with any letter as a seed, so the choice of $0$ here is arbitrary.} $w = \phi^k(0)$ for some $k \geq 1$ such that $|w|\geq n$, and count all unique $n$-letter words appearing as factors of the seed.  We then iterate $\phi$ on $w$ and add all new $n$-letter words appearing in $\phi(w)$ to the result.  At each stage of the iteration, we count the size before and after adding the new words. If our list of admitted words does not change between iterates, we can stop as no new $n$-letter words are generated after a step that generates no new $n$-letter words.  It follows that the value $p_\phi(n)$ is computable in finite time for any fixed $n \geq 1$. Moreover, it is efficient to compute successive terms of the complexity function of a primitive substitution \cite{GSS:subword-complexity}.

\begin{example}
It is well known that the complexity function for the Fibonacci substitution satisfies $p_\phi(n)=n+1$ (so the Fibonacci word is Sturmian), and we can verify this for any value of $n$ by computing its complexity in Grout.

\begin{figure}[H]
\centering
\includegraphics[scale=0.5]{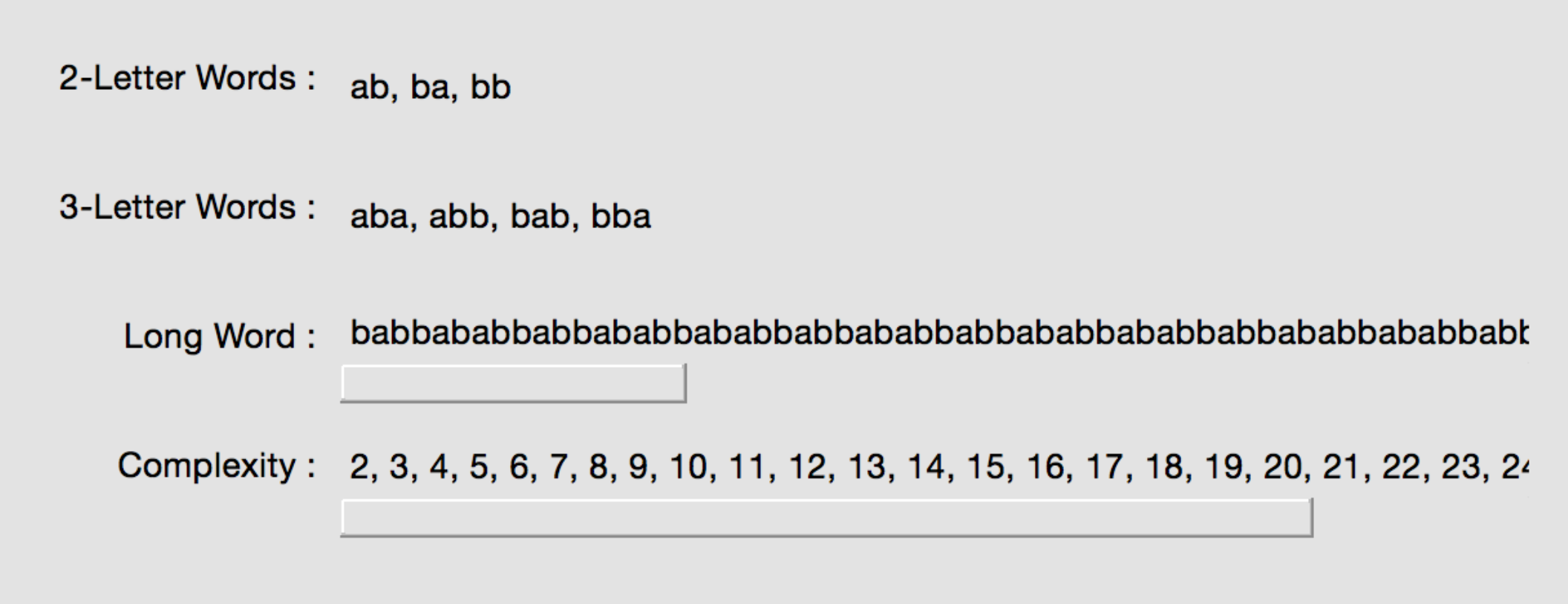}
\caption{The words results for the substitution $a \mapsto b, b\mapsto ba$ displayed in Grout.}
\end{figure}
\end{example}

\subsection{Barge-Diamond and Anderson-Putnam complexes}\label{SEC:complexes}
Grout has the ability to output two graphs as pdf files (provided that the user has PDFLaTeX installed).  The first of these is the \emph{Barge-Diamond complex} \cite{BD}, which is the key tool used in one of the methods of computing the \Cech cohomology of the tiling space.

\begin{definition}
Let $\mathcal{A} = \{0, \ldots , (l-1)\}$ be an alphabet with a substitution $\phi \colon \mathcal{A} \to \mathcal{A}^+$. We construct the \emph{Barge-Diamond complex} of $\phi$ (\emph{BD complex} for short) as follows:  we have two vertices for each $i \in \Al$; an \emph{in} node $v_i^+$ and an \emph{out} node $v_i^-$.  We draw an edge from $v_i^+$ to $v_i^-$ for all $i$.  Then, for all two-letter words $ij \in \mathcal{L}^2_\phi$ admitted by $\phi$, we draw an edge from $v_i^-$ to $v_j^+$.
\end{definition}

\begin{example}\label{EX:platinum}
The \emph{platinum mean} \cite[Ex.\ 4.7, p.\ 93]{BG:book} is the algebraic number $\lambda_{pm} = 2 + \sqrt{3}$, which is the PF eigenvalue of the matrix $\left[\begin{smallmatrix} 3 & 2 \\ 1 & 1 \end{smallmatrix}\right]$ (this matrix is obviously not unique). For this reason, we call the substitution $\phi \colon 0 \mapsto 0001, 1 \mapsto 001$ the \emph{platinum mean substitution} with fixed point \seqnum{A275855}:
$$0 0 0 1 0 0 0 1 0 0 0 1 0 0 1 0 0 0 1 0 0 0 1 0 0 0 1 0 0 1 0 0 0 1 0 0 0 1 0 0 0 1 0 0 1 0 0 0 1 0 0 0 1 0 0 1 0 0 0 1 0 0 0 1 0 0 0 1 0 0 1 \cdots.$$
The two-letter and three-letter admitted words for $\phi$ are $\mathcal{L}^2_\phi = \{00, 01, 10\}$ and $\mathcal{L}^3_\phi = \{000,001,010,100\}$. The BD complex for $\phi$ is given in Figure \ref{FIG:BD}.

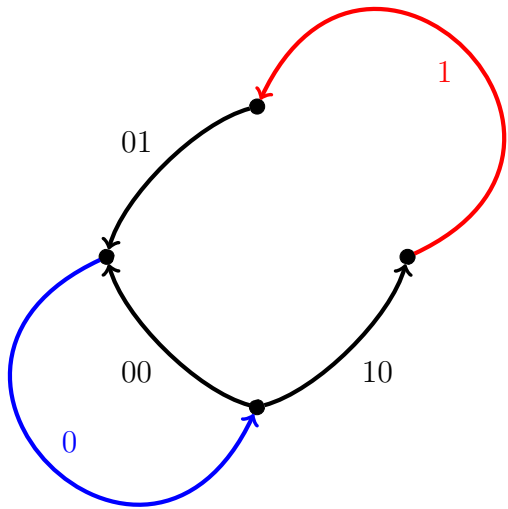
\begin{figure}[H]
\centering

\begin{tikzpicture}[->, node distance=2cm, auto] 
\clip (-5,-3.4) rectangle (5,3.5);
\node [fill,circle,draw,inner sep = 0pt, outer sep = 0pt, minimum size=2mm] (ai) at (2.000000,0.000000) {}; 
\node [fill,circle,draw,inner sep = 0pt, outer sep = 0pt, minimum size=2mm] (ao) at (0.000593,2.000000) {}; 
\node [fill,circle,draw,inner sep = 0pt, outer sep = 0pt, minimum size=2mm] (bi) at (-2.000000,0.001185) {}; 
\node [fill,circle,draw,inner sep = 0pt, outer sep = 0pt, minimum size=2mm] (bo) at (-0.001778,-1.999999) {}; 
\draw (ai) edge[bend right=110, looseness=3, ->, ultra thick, color=red] node {$1$}(ao); 
\draw (bi) edge[bend right=110, looseness=3, ->, ultra thick, color=blue] node {$0$}(bo); 
\draw [-,ultra thick, bend right, draw=white, line width=6pt, looseness=0.7] (bo) to (ai); 
\draw [->,ultra thick, bend right, looseness=0.7, swap] (bo) to node {$10$} (ai); 
\draw [-,ultra thick, bend right, draw=white, line width=6pt, looseness=0.7] (ao) to (bi); 
\draw [->,ultra thick, bend right, looseness=0.7, swap] (ao) to node {$01$} (bi); 
\draw [-,ultra thick, bend right, draw=white, line width=6pt, looseness=0.7] (bo) to (bi); 
\draw [->,ultra thick, bend left, looseness=0.7] (bo) to node {$00$} (bi); 

\end{tikzpicture}
\caption{The Barge-Diamond complex for the platinum mean substitution.}
\label{FIG:BD}
\end{figure}
\end{example}

The other complex that we consider for a substitution is (a variant of) the \emph{collared Anderson-Putnam complex} \cite{AP}, which is again the key tool used in one of the methods of computing \Cech cohomology of the tiling space. This particular definition is based on what G\"{a}hler and Maloney call the \emph{Modified Anderson-Putnam complex} \cite{GM:multi-one-d}.  The Anderson-Putnam complex is constructed in a similar fashion to the Barge-Diamond complex, but makes use of both the two-letter and three-letter words.

\begin{definition}\label{DEF:ap-complex}
Let $\mathcal{A} = \{0, \ldots , (l-1)\}$ be an alphabet with a substitution $\phi \colon \mathcal{A} \to \mathcal{A}^+$. We construct the \emph{Anderson-Putnam complex} of $\phi$ (\emph{AP complex} for short) as follows:  we have a vertex $v_{ij}$ for each two-letter word $ij \in \mathcal{L}^2_\phi$ admitted by $\phi$.  We draw an edge from $v_{ij}$ to $v_{jk}$ if and only if the three-letter word $ijk$ is admitted by $\phi$.
\end{definition}

\begin{remark}
Readers familiar with the family of Rauzy graphs \cite{R:rauzy-graphs} associated with an infinite sequence over a finite alphabet will recognize a similarity between the Anderson-Putnam complex and the \emph{second Rauzy graph} of an infinite sequence. Indeed, they are isomorphic as graphs. Anderson and Putnam originally introduced their complex in the context of tilings of $d$-dimensional Euclidean space, where the case $d=1$ corresponds to tilings of the line. Such one-dimensional tilings are combinatorially described by bi-infinite sequences over finite alphabets. In their definition, the edges of the AP complex also come equipped with data on the lengths of the corresponding tiles in the associated tiling. Although useful in other contexts, the cohomology groups that we later study via the AP complex are independent of this edge-length data.
\end{remark}

\begin{remark}
	One should note that this definition of the AP complex is slightly different from the definition originally introduced by Anderson and Putnam. In particular, the original definition distinguishes between different occurrences of a two-letter word $ij$ if the occurrences of three-letter words containing $ij$ as a factor do not overlap on some admitted four-letter word. For example, if the language of a substitution includes the two-letter word $ab$, the three-letter words $xab, yab, abw, abz$, and the four-letter words $xabw, yabz$, but the words $xabz$ and $yabw$ do not belong to $\mathcal{L}_\phi$, then the original definition of the AP complex has two instances of vertices with the label $ab$, say $(ab)_1$ and $(ab)_2$. In our definition, these vertices are identified, so that $(ab)_1 \sim (ab)_2 \sim ab$. An example of such a substitution is given by $\phi \colon a \mapsto bc, b \mapsto baab, c \mapsto caac$ where we label exactly one vertex with the label $aa$, but the original definition would require we include two distinct vertices labelled $(aa)_1$ and $(aa)_2$.
\end{remark}
In our discussion of cohomology calculated via AP complexes in Section \ref{SEC:ap-cohomology}, we use the version of the AP complex given in Definition  \ref{DEF:ap-complex} to describe the performed calculations. This is also the method implemented for Grout. It would have been possible to use the original definition, or one of the many variant AP complexes that have been defined in the literature. There are at least three such variants discussed by G\"{a}hler and Maloney \cite{GM:multi-one-d}, of varying complexities and situations in which they can be used.

\begin{example}
As discussed in Example \ref{EX:platinum}, the platinum mean substitution has two-letter and three-letter admitted words $\mathcal{L}^2_\phi = \{00, 01, 10\}$ and $\mathcal{L}^3_\phi = \{000, 001, 010, 100\}$. The AP complex for the platinum mean substitution is given in Figure \ref{FIG:AP}.

\begin{figure}[H]

\centering

\begin{tikzpicture}[->, node distance=3cm, auto, text=black, line width=0.5mm] 
\clip (-3,-4) rectangle (2.4,2);
\node [circle,inner sep = 0pt, outer sep = 2pt, minimum size=2mm] (abi) at (2.000000,0.000000) {$10$}; 
\node [circle,inner sep = 0pt, outer sep = 2pt, minimum size=2mm] (bai) at (-0.999316,1.732446) {$01$}; 
\node [circle,inner sep = 0pt, outer sep = 2pt, minimum size=2mm] (bbi) at (-1.001368,-1.731260) {$00$}; 
\draw [-, bend left, draw=white, line width=2pt, looseness=0.7] (abi) to (bbi); 
\draw [->, bend left, looseness=0.7] (abi) to (bbi); 
\draw [-, bend left, draw=white, line width=2pt, looseness=0.7] (bai) to (abi); 
\draw [->, bend left, looseness=0.7] (bai) to (abi); 
\draw [-, bend left, draw=white, line width=2pt, looseness=0.7] (bbi) to (bai); 
\draw [->, bend left, looseness=0.7] (bbi) to (bai);
\draw (bbi) to [out=300,in=180,looseness=20] (bbi);

\end{tikzpicture}

\caption{The Anderson-Putnam complex for the platinum mean substitution.}
\label{FIG:AP}
\end{figure}
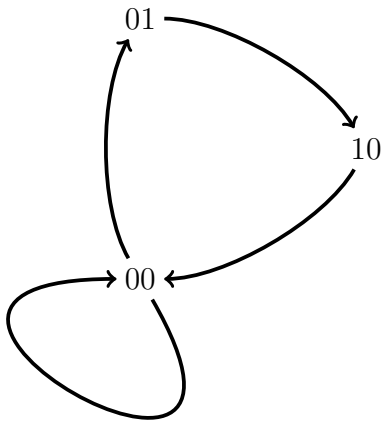
\end{example}

\subsection{Recognizability}\label{SEC:recog}

	The results in this section on recognizability are not new, but are rarely cited in the literature. It follows that this section may be of interest to experts who may be unaware of such results, especially given the strength of Corollary \ref{COR:recog}.
\begin{definition}	
		Let $\phi$ be a substitution on the alphabet $\mathcal{A}$. We say $\phi$ is \emph{recognizable} if, for every bi-infinite sequence $s \in X_\phi$ admitted by $\phi$, there is a unique way of decomposing $s$ into words of the form $\phi(a)$ for $a \in\mathcal{A}$. That is, there exists a unique bi-infinite sequence $s' = \cdots a_{-1} a_0 a_1 \cdots$ and integer $n<|\phi(a_0)|$ such that $s = \sigma^n(\phi(s'))$. In this sense, one can \emph{recognize} which substituted letter each letter in $s$ has come from.
		
		Equivalently, we say $\phi$ is recognizable if there exists a natural number $K \geq 1$ such that for all admitted words $w \in \mathcal{L}_\phi$ with $|w| > 2K$, there exist unique words $x, y$ of length $|x|, |y| \leq K$ and a unique admitted word $u \in \mathcal{L}_\phi$ such that $w = x \phi(u) y$.
	\end{definition}
Recognizability is an important property of a tiling, as many of the tools used to study the topology of the associated tiling spaces rely on recognizability as a hypothesis, much like primitivity. Recognizability of a primitive substitution is equivalent to aperiodicity of the subshift $X_\phi$ by a celebrated result of Moss\'{e} \cite{M:aperiodic} (later extended to higher dimensions by Solomyak \cite{S:aperiodic}), and so any recognizability check for substitutions will also constitute an aperiodicity check for fixed points of primitive substitutions.  The algorithm designed to determine if a given substitution is recognizable relies on finding a fixed letter and return words to that fixed letter.

\begin{definition}
Given a substitution $\phi$ on an alphabet $\mathcal{A}$, the letter $a$ is said to be \emph{fixed} (on the left) of \emph{order} $k$ if there exists some integer $k$ such that $\phi^k(a) = a u$ for some word $u$. Every substitution has at least one fixed letter and the value of $k$ for such a letter is bounded by the size of the alphabet.
\end{definition}

\begin{definition}
Given a fixed letter $a$, a \emph{return word} to $a$ is a word $v$ such that $v = au $ for some (possibly empty) word $u \in (\mathcal{A}\setminus\{a\})^\ast$, and $va = aua$ is an admitted word of the substitution.
\end{definition}

Since their introduction by Durand \cite{D:derived-seqs}, return words have proved to be an extremely useful tool in studying substitutions. If $\phi$ is primitive, then the set of return words to any letter is finite. We use these return words to determine whether a substitution is recognizable or not.

The following is a result of Harju and Linna \cite{HL:periodicity}.
	\begin{theorem}
		Let $\phi$ be a primitive substitution on $\mathcal{A}$ and let $a$ be a fixed letter. Let $\mathcal{R}_{\phi,a}$ be the set of all return words to $a$. So $\mathcal{R}_{\phi,a} = \{v \mid v = au,\: aua \in \mathcal{L}_\phi,\: u \in (\mathcal{A} \setminus \{a\})^\ast\}$. The substitution $\phi$ is not recognizable if and only if, for all $v, v' \in \mathcal{R}_{\phi,a}$, there exists a natural number $p \geq 1$ such that $\phi^p(vv') = \phi^p(v'v)$.
	\end{theorem}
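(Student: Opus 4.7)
The plan is to invoke Moss\'e's theorem (cited just before the statement) to restate the biconditional as: $X_\phi$ contains a periodic sequence if and only if some power of $\phi$ sends every return word to a positive power of a single fixed word. The commutation $\phi^p(vv') = \phi^p(v'v)$ is equivalent, by the classical Lyndon--Sch\"utzenberger theorem on commuting words, to $\phi^p(v)$ and $\phi^p(v')$ being powers of a common word; applying this pairwise and using transitivity over a finite set yields a single word $r$ such that every $\phi^p(v)$ is a positive power of $r$. This reformulation drives both directions.

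For the $(\Leftarrow)$ direction, first observe that $\mathcal{R}_{\phi,a}$ is finite (by primitivity) and that $\phi^p(vv') = \phi^p(v'v)$ is inherited by every larger exponent, so a single $p$ can be chosen to realise the identity for every pair simultaneously. The Lyndon--Sch\"utzenberger theorem and transitivity then produce the common root $r$. Any $s \in X_\phi$, after being shifted to place an occurrence of $a$ at the origin, admits a unique bi-infinite decomposition $s = \cdots v_{-1} v_0 v_1 \cdots$ into return words (every admitted sequence meets $a$ infinitely often on both sides by primitivity and minimality). Therefore $\phi^p(s) = \cdots r^{n_{-1}} r^{n_0} r^{n_1} \cdots$ is a bi-infinite repetition of $r$, hence periodic with period $|r|$. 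Since $\phi^p(s) \in X_\phi$ by $\phi$-invariance of the subshift, $X_\phi$ fails to be aperiodic and Moss\'e's theorem delivers the failure of recognizability.

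For the $(\Rightarrow)$ direction, Moss\'e yields a periodic point in $X_\phi$, and Proposition~\ref{PROP:minimal} together with minimality forces the whole subshift to be its finite orbit, sharing one common minimal period $T$. Fix a periodic $s \in X_\phi$ with $s_0 = a$ and write $s = w^\infty$; the period $w$ factors canonically into return words $w = v_{j_1} \cdots v_{j_m}$, and by minimality $\{v_{j_1}, \ldots, v_{j_m}\} = \mathcal{R}_{\phi,a}$. Choosing $p$ to be a multiple of the fixing order of $a$, the sequence $\phi^p(s) \in X_\phi$ is also periodic of period $T$, so $\phi^p(w) = \rho_p^{N_p}$ for some period word $\rho_p$ of length $T$.

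The main obstacle is lifting this block-level periodicity to the individual-return-word identity that each $\phi^p(v)$ is a power of a common word. The plan here is to pass to the derived substitution $\tilde\phi$ on the alphabet $\mathcal{R}_{\phi,a}$ (where $\tilde\phi(v)$ records the return-word decomposition of $\phi(v)$): $\tilde\phi$ is primitive and has subshift topologically conjugate to $X_\phi$, hence also periodic of period $m$. The return-word decomposition of $\phi^p(s) = \rho_p^\infty$ must therefore be a cyclic shift of $(v_{j_1}, \ldots, v_{j_m})^\infty$, which constrains how each substituted block $\phi^p(v_{j_k})$ sits inside successive copies of $\rho_p$: each block begins at an $a$-position of the period and covers a contiguous arc whose image in the derived alphabet is a rotation of the periodic pattern. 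Replacing $p$ by a sufficiently large multiple absorbs these cyclic rotations into honest powers, forcing every $\phi^p(v)$ for $v \in \mathcal{R}_{\phi,a}$ to be a positive power of a single primitive root, which is the desired commutation relation.
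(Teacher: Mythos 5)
First, note that the paper does not prove this statement at all: it is quoted verbatim from Harju and Linna \cite{HL:periodicity}, so there is no in-paper proof to compare against and your argument has to stand on its own. Your overall reduction is the right one: Moss\'e turns the claim into ``$X_\phi$ contains a periodic point iff the images of return words eventually commute,'' and Lyndon--Sch\"utzenberger (plus uniqueness of primitive roots and finiteness of $\mathcal{R}_{\phi,a}$, which lets you take one $p$ for all pairs) correctly converts the commutation hypothesis into ``all $\phi^p(v)$ are positive powers of one primitive word $r$.'' Your $(\Leftarrow)$ direction is then essentially complete: every $s\in X_\phi$ decomposes into return words by primitivity, $\phi^p(s)$ is the bi-infinite repetition of $r$, and $\phi$-invariance of $X_\phi$ plus Moss\'e kills recognizability.

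The $(\Rightarrow)$ direction, however, has a genuine gap at its final step. You correctly observe that $\phi^p(w)=\rho_p^{N_p}$ at the level of the whole period block $w=v_{j_1}\cdots v_{j_m}$, and that the problem is to show each individual block $\phi^p(v_{j_i})$ is a power of a common word --- equivalently (since that common word must then be the length-$T$ period, $T$ the minimal period) that $T$ divides each $|\phi^p(v_{j_i})|$. Your proposed mechanism, ``replacing $p$ by a sufficiently large multiple absorbs these cyclic rotations into honest powers,'' is asserted, not proved, and it is not the right mechanism: it would require the number of return words occurring in the decomposition of $\phi^p(v)$ to become divisible by $m$ for a suitable multiple of $p$, and nothing you have said rules out, say, that this count stays $\equiv 1 \pmod m$ for all $p$; the vector of cut-point phases mod $T$ is merely eventually periodic in $p$, so pigeonhole alone never forces it to hit zero. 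The missing idea is simpler and does not need the derived substitution: every return word begins with $a$, so every block $\phi^p(v)$ begins with the \emph{common} prefix $\phi^p(a)$, whose length exceeds $T$ for large $p$ by primitivity. A factor of length at least $T$ of a sequence with primitive minimal period $T$ occurs at exactly one phase modulo $T$, so all block boundaries in $\phi^p(s)=\phi^p(v_{j_1})\phi^p(v_{j_2})\cdots$ lie in a single residue class mod $T$; hence $T\mid|\phi^p(v)|$ for every return word, each $\phi^p(v)$ is literally a power of the length-$T$ period word $\rho_p$, and the required identities $\phi^p(vv')=\phi^p(v'v)$ follow. With that substitution your proof closes; as written, the key step is a gap.
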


	The set of return words $\mathcal{R}_{\phi,a}$ is finite, but a priori one still needs to check arbitrarily high values of $p$ according to the above theorem. This, in fact, is not the case. The next result appears in the work of Culik \cite{C:iterates-of-words} and also Ehrenfeucht and Rozenberg \cite{ER:iterates-of-words}.
	\begin{lemma}
		Let $\phi$ be a substitution on $\mathcal{A}$ and let $|\mathcal{A}| = l$. For words $u, w \in \mathcal{A}^+$, there exists a natural number $p \geq 1$ such that $\phi^p(u) = \phi^p(w)$ if and only if $\phi^l(u) = \phi^l(w)$.
	\end{lemma}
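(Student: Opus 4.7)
The plan is to split the biconditional. The $(\Leftarrow)$ direction is immediate: if $\phi^l(u) = \phi^l(w)$, then $p = l$ witnesses the existence. For the $(\Rightarrow)$ direction, I would introduce the ascending chain of equivalence relations $\sim_k$ on $\mathcal{A}^+$ defined by $u \sim_k w$ if and only if $\phi^k(u) = \phi^k(w)$. Each $\sim_k$ is a congruence on the free monoid, the chain is increasing because applying $\phi$ preserves equality, and the relation ``there exists $p \geq 1$ with $\phi^p(u) = \phi^p(w)$'' is exactly $\bigcup_k \sim_k$. The task therefore reduces to proving $\sim_l = \sim_{l+1}$.

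The first structural observation I would establish is the pullback identity $u \sim_{k+1} w$ if and only if $\phi(u) \sim_k \phi(w)$. This immediately yields self-stabilization: if $\sim_k = \sim_{k+1}$, then $u \sim_{k+2} w$ is equivalent in turn to $\phi(u) \sim_{k+1} \phi(w)$, to $\phi(u) \sim_k \phi(w)$, and to $u \sim_{k+1} w$, so $\sim_{k+1} = \sim_{k+2}$, and inductively the chain is constant from $\sim_k$ onward. It thus suffices to exhibit some $k \leq l$ with $\sim_k = \sim_{k+1}$. To bound this, I would restrict the chain to single letters: let $P_k$ be the partition of $\mathcal{A}$ in which $a$ and $b$ lie in the same block if and only if $\phi^k(a) = \phi^k(b)$. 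The sequence $P_1, P_2, \ldots$ consists of increasingly coarse partitions of a size-$l$ set, and each strict coarsening step decreases the block count by at least one. Since the block count takes values in $\{1, \ldots, l\}$, there can be at most $l-1$ strict coarsenings, so some $k \leq l-1$ satisfies $P_k = P_{k+1}$.

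The main obstacle, and the technically delicate core of the proof, is upgrading this letter-level stabilization to word-level stabilization: showing that $P_k = P_{k+1}$ on $\mathcal{A}$ forces $\sim_k = \sim_{k+1}$ on all of $\mathcal{A}^+$. My attack would be by induction on $|u| + |w|$: given $u \sim_{k+1} w$, compare the leftmost factorization-blocks $\phi^{k+1}(u_1)$ and $\phi^{k+1}(w_1)$ inside the common word $\phi^{k+1}(u) = \phi^{k+1}(w)$, invoke the pullback identity together with letter-partition stability to identify the first letters $u_1, w_1$ up to $\sim_k$-equivalence, strip them off, and recurse on a strictly shorter pair. A possibly cleaner alternative is to track simultaneously the abelianized kernel $\ker M_\phi^k \subseteq \mathbb{Q}^l$, which stabilizes at step $l$ by the standard linear algebra fact that the kernels of powers of an $l \times l$ matrix stabilize by exponent $l$, and argue that the letter-partition together with the abelianized kernel jointly pin down all of $\sim_k$. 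Carrying out the combinatorial bookkeeping in either approach is the heart of the argument.
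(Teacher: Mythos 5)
The paper does not actually prove this lemma---it is quoted from Culik and from Ehrenfeucht--Rozenberg---so there is no internal proof to compare against; judged on its own terms, your proposal has a fatal gap. The framing is fine up to a point: the pullback identity $u\sim_{k+1}w \Leftrightarrow \phi(u)\sim_k\phi(w)$ and the self-stabilization of the chain $(\sim_k)$ are correct, so it would indeed suffice to find $k\leq l$ with $\sim_k=\sim_{k+1}$. The problem is the step you yourself flag as the ``technically delicate core'': the claim that stabilization of the letter partitions, $P_k=P_{k+1}$, forces $\sim_k=\sim_{k+1}$ on all of $\mathcal{A}^+$. This claim is false. Take $\mathcal{A}=\{a,b,c,e,p,q\}$ and $\phi\colon a\mapsto p,\ b\mapsto q,\ c\mapsto ae,\ e\mapsto q,\ p\mapsto p,\ q\mapsto q$. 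Then $\phi^2\colon a\mapsto p,\ b\mapsto q,\ c\mapsto pq,\ e\mapsto q,\ p\mapsto p,\ q\mapsto q$, so $P_1=P_2=\{\{a,p\},\{b,e,q\},\{c\}\}$; yet $\phi(ab)=pq\neq ae=\phi(c)$ while $\phi^2(ab)=pq=\phi^2(c)$, so $ab\not\sim_1 c$ but $ab\sim_2 c$, i.e., $\sim_1\subsetneq\sim_2$. Your proposed induction on $|u|+|w|$ breaks down on exactly this configuration: the leftmost blocks $\phi^{k+1}(u_1)$ and $\phi^{k+1}(w_1)$ need not be equal (here $p$ is a proper prefix of $pq$), so there is no first letter to ``strip off,'' and no amount of bookkeeping can repair the argument since the target statement is untrue. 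Your fallback via $\ker M_\phi^k$ does not rescue it as presented: in the example above the abelianized kernels have also not stabilized at step $1$ (which shows the two trackers decouple from the letter partition), and you give no argument that joint stabilization of the letter partition and the kernel chain would pin down $\sim_k$.

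The genuinely missing idea is the one the cited proofs use: control non-injectivity of a morphism by the \emph{defect theorem} rather than by letter identifications. If $h\colon\Sigma^*\to\Sigma^*$ is not injective, then either $h$ identifies two letters or $h(\Sigma)$ is not a code; in both cases $h(\Sigma)$ lies in $Y^*$ for some set $Y$ of fewer than $|\Sigma|$ words, so $h$ factors as $g\circ f$ through an alphabet $\Gamma$ with $|\Gamma|<|\Sigma|$ (morphisms admitting no such factorization are called \emph{elementary}, and elementary morphisms are injective). One then inducts on $|\Sigma|$: in the simplifiable case, $f\circ h^p=(f\circ g)^p\circ f$, so $h^p(u)=h^p(w)$ gives $(fg)^p(f(u))=(fg)^p(f(w))$; the inductive hypothesis on the smaller alphabet yields $(fg)^{|\Gamma|}(f(u))=(fg)^{|\Gamma|}(f(w))$, and applying $g$ gives $h^{|\Gamma|+1}(u)=h^{|\Gamma|+1}(w)$ with $|\Gamma|+1\leq l$, after which equality persists. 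The decrement of the alphabet size through the defect theorem is what produces the bound $l$; counting coarsenings of the letter partition cannot substitute for it.
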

	That is, if some iterated substitution of $u$ and $w$ are ever equal, then their iterates must become equal at least by the $l$th iteration of the substitution, where $l$ is the size of the alphabet.
	Together, these results give us the following corollary which provides us with a finite deterministic check for recognizability:
	\begin{corollary}\label{COR:recog}
	Let $\phi$ be a primitive substitution on an alphabet with $l$ letters. Let $a \in \mathcal{A}$ be a fixed letter. The substitution $\phi$ is recognizable if and only if, for all return words $v, v'$ to $a$ $$\phi^l(vv') = \phi^l(v'v) \implies v = v'.$$
	\end{corollary}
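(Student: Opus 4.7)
The plan is to chain together the two preceding results: the Harju--Linna theorem characterizes non-recognizability in terms of the existence of some power $p$ equating $\phi^p(vv')$ and $\phi^p(v'v)$ for return words $v, v'$, while the Culik/Ehrenfeucht--Rozenberg lemma lets us collapse this existential quantifier down to the single test at $p = l$. Once both are in hand, the corollary reduces to rewriting quantifiers.

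First I would restate Harju--Linna in its most usable form: $\phi$ fails to be recognizable precisely when there exist distinct return words $v, v' \in \mathcal{R}_{\phi,a}$ and some natural number $p \geq 1$ with $\phi^p(vv') = \phi^p(v'v)$. (The case $v = v'$ is vacuous and contributes no information, so the real content of the criterion concerns distinct pairs.) Next I would apply the Culik/Ehrenfeucht--Rozenberg lemma to each candidate pair $u = vv'$ and $w = v'v$, both of which are elements of $\mathcal{A}^+$. The lemma says that the existence of \emph{any} power $p$ witnessing $\phi^p(u) = \phi^p(w)$ is equivalent to the single equality $\phi^l(u) = \phi^l(w)$, where $l = |\mathcal{A}|$. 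Substituting this equivalence into the Harju--Linna criterion gives that $\phi$ is not recognizable if and only if there exist distinct $v, v' \in \mathcal{R}_{\phi,a}$ with $\phi^l(vv') = \phi^l(v'v)$. Taking the contrapositive, $\phi$ is recognizable if and only if no distinct pair of return words satisfies $\phi^l(vv') = \phi^l(v'v)$, which is exactly the statement that $\phi^l(vv') = \phi^l(v'v) \Rightarrow v = v'$ for all $v, v' \in \mathcal{R}_{\phi,a}$.

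The main obstacle I anticipate is purely in the quantifier bookkeeping: one has to be careful not to conflate the tautological case $v = v'$ with the meaningful content of the criterion, and to verify that the Culik/Ehrenfeucht--Rozenberg lemma really does apply uniformly to each candidate pair $(vv', v'v)$ independently of the other pairs. Beyond this rearrangement of quantifiers, nothing new is required; the deterministic nature of the final check (finitely many return words, each of bounded length, with a single iterate $\phi^l$ to compare) then follows immediately from the finiteness of $\mathcal{R}_{\phi,a}$ guaranteed by primitivity.
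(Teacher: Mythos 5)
Your overall strategy -- chain the Harju--Linna theorem with the Culik/Ehrenfeucht--Rozenberg lemma and then rearrange quantifiers -- is exactly what the paper does (the paper offers no further detail than ``together, these results give us the corollary''). However, your write-up contains a step that is not the innocuous bookkeeping you present it as. You ``restate'' Harju--Linna as: $\phi$ fails to be recognizable precisely when \emph{there exist distinct} return words $v, v'$ and some $p$ with $\phi^p(vv') = \phi^p(v'v)$. The theorem as stated says something different: $\phi$ fails to be recognizable if and only if \emph{for all} pairs $v, v'$ there is such a $p$. Your justification -- that the diagonal case $v = v'$ is vacuous -- only explains why one may restrict attention to distinct pairs inside the universal quantifier; it does not license replacing $\forall$ by $\exists$. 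These are genuinely different conditions, and the direction of the corollary that asserts ``some distinct pair with $\phi^l(vv') = \phi^l(v'v)$ implies non-recognizability'' is exactly the direction where this matters: from the theorem you only get non-recognizability once \emph{every} pair commutes after substitution. To close this you must argue that a single distinct commuting pair of return words already forces all pairs to commute (equivalently, forces periodicity of the subshift) -- for instance by noting that $\phi^p(vv') = \phi^p(v'v)$ means $\phi^p(v)$ and $\phi^p(v')$ are powers of a common primitive word (Lyndon--Sch\"utzenberger) and then leveraging primitivity and the decomposition of the fixed point into return words. That argument is the real content of one direction of the corollary, and it is absent.

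A smaller point, which your existential ``restatement'' also quietly breaks: when $\mathcal{R}_{\phi,a}$ has a single element $v$, the fixed point is $vvv\cdots$, hence periodic and non-recognizable, yet the condition ``$\phi^l(vv') = \phi^l(v'v) \Rightarrow v = v'$'' holds vacuously and your version of the theorem (no distinct pair exists) would declare $\phi$ recognizable. So the claimed equivalence needs at least the standing hypothesis that there are two or more return words; your remark about ``the deterministic nature of the final check'' should acknowledge this degenerate case rather than fold it into the tautological $v = v'$ case.
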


\begin{figure}[H]
\centering
\includegraphics[scale=0.5]{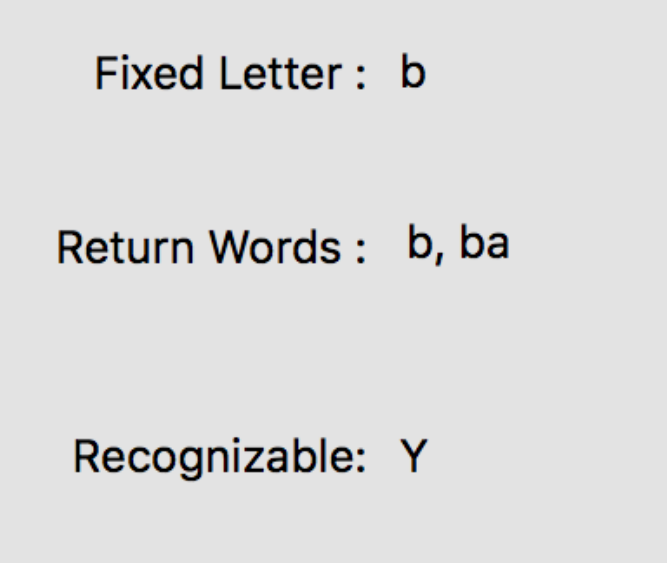}
\caption{The recognizability results in Grout for the substitution $a \mapsto b, b \mapsto ba$.}
\end{figure}
\begin{example}\label{EX:TM-return}
Let $\phi \colon 0 \mapsto 01, 1 \mapsto 10$ be the Thue-Morse substitution. The letter $0$ is a fixed letter and the return words to $0$ are $\mathcal{R}_{\phi,0} = \{0, 01, 011\}$. We check all three distinct pairs:

$$\begin{array}{ll}
\phi^2(0\:01) = 011001101001 				& \phi^2(01\:0) = 011010010110\\
\phi^2(0\:011) = 0110 011010011001 			& \phi^2(011\:0) = 011010011001 0110\\
\phi^2(01\:011) = 01101001 011010011001 	& \phi^2(011\:01) =  011010011001 01101001.
\end{array}$$
As no row produces an equal pair of words, we conclude from Corollary \ref{COR:recog} that the Thue-Morse substitution is recognizable.
\end{example}

\section{Cohomology of tiling spaces}\label{SEC:cohomology}
In this section, we discuss methods for calculating the first \Cech cohomology of the tiling space associated with a primitive recognizable substitution. In order to understand the topological structure of these tiling spaces, one would ordinarily need to be familiar with the notion of an inverse limit of topological spaces. Indeed, to prove that the following methods are correct, inverse limits are a necessary intermediary. Our approach, however, omits any discussion of inverse limits, and we instead refer the interested reader to the very approachable book by Sadun \cite{S:book}. We replace such discussion instead with combinatorial and algebraic machinery.

We first introduce two important tools from the theory of abelian groups and homological algebra---\emph{exact sequences} and \emph{direct limits}. These notions and results can be found in any introductory text on homological algebra \cite{CE:book}.
\begin{definition}
Let $A_i$ be a bi-infinite sequence of abelian groups for $i \in \mathbb{Z}$ and let $F = \{f_i \colon A_i \to A_{i+1}\}$ be a sequence of group homomorphisms. As a diagram, we represent such a sequence of homomorphisms as follows:
$$\cdots \to A_{i} \stackrel{f_{i}}{\to} A_{i+1} \stackrel{f_{i+1}}{\to} A_{i+2} \to \cdots.$$
We say that the sequence of homomorphisms $F$ is \emph{exact}, and call the above diagram a \emph{long exact sequence} (LES) if, for all $i \in \mathbb{Z}$, $\ker f_{i+1} = \operatorname{im} f_i$.

If all but a consecutive collection of three of the groups $A_i, A_{i+1}, A_{i+2}$ are trivial, we say that the diagram is a \emph{short exact sequence} (SES). Generally, an SES will be written as a finite diagram capped by trivial groups:
$$0 \to A \to B \to C \to 0.$$
\end{definition}
The following result is an easy exercise.
\begin{proposition}
The diagram $0 \stackrel{f}{\to} A \stackrel{g}{\to} B \stackrel{h}{\to} 0$ is exact if and only if $g$ is an isomorphism.
\end{proposition}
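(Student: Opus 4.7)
The plan is to unpack the definition of exactness at each of the two non-trivial positions in the diagram and observe that each condition translates directly into one half of the definition of an isomorphism. The diagram $0 \stackrel{f}{\to} A \stackrel{g}{\to} B \stackrel{h}{\to} 0$ has (potential) exactness at three positions: at $A$, at $B$, and trivially at the flanking zero groups. So the proof will consist of showing (i) exactness at $A$ is equivalent to $g$ being injective, and (ii) exactness at $B$ is equivalent to $g$ being surjective. Together these give the biconditional.

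First I would handle the position at $A$. The only map into $A$ is $f \colon 0 \to A$, whose image is the trivial subgroup $\{0\} \subseteq A$. Exactness at $A$ says $\ker g = \operatorname{im} f = \{0\}$, which is precisely the statement that $g$ is injective. Next I would handle the position at $B$. The only map out of $B$ in the diagram is $h \colon B \to 0$, whose kernel is all of $B$. Exactness at $B$ therefore says $\operatorname{im} g = \ker h = B$, which is precisely the statement that $g$ is surjective.

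Combining the two equivalences, the sequence is exact (at both non-trivial positions) if and only if $g$ is both injective and surjective, i.e., an isomorphism of abelian groups. I would also remark briefly that exactness at the flanking zero groups is automatic, since both the kernel and image there are forced to be the trivial group, so no further conditions arise.

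There is essentially no obstacle here; the proposition is a direct unwinding of definitions. The only minor point worth being careful about is making clear that the zero group $0$ admits unique homomorphisms in and out of any abelian group, so the maps $f$ and $h$ in the statement are uniquely determined and carry no additional data beyond what is encoded by $g$.
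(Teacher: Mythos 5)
Your proof is correct and is exactly the standard unwinding of definitions that the paper has in mind; the paper itself states this proposition as ``an easy exercise'' and gives no proof, so there is nothing to diverge from. Exactness at $A$ giving injectivity and exactness at $B$ giving surjectivity is precisely the intended argument.
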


The next result appears in more general forms, but we give here only the version that we require. The proof is well-known \cite{CE:book}.

Let $0 \to A \stackrel{f}{\to} B \stackrel{g}{\to} C \to 0$ be a short exact sequence of abelian groups. If there exists a homomorphism $i \colon C \to B$ such that $g \circ i = \operatorname{Id}_C$, or there exists a homomorphism $j\colon A \to B$ such that $j \circ f = \operatorname{Id}_A$, then we say that the SES \emph{splits}.

\begin{lemma}[Splitting lemma]
If $0 \to A \to B \to C \to 0$ is a split SES, then there exists an isomorphism $h \colon B \to A \oplus C$.
\end{lemma}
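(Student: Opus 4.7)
The plan is to construct explicit isomorphisms directly from the splitting data, handling the two hypotheses in parallel. Write the SES as $0 \to A \xrightarrow{f} B \xrightarrow{g} C \to 0$; exactness guarantees that $f$ is injective, $g$ is surjective, and $\ker g = \operatorname{im} f$, and I will invoke each of these three facts at least once.

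First I would treat the retraction case: suppose $j\colon B\to A$ satisfies $j\circ f = \operatorname{Id}_A$. Define
\[
h\colon B \to A \oplus C, \qquad h(b) = (j(b), g(b)).
\]
It is a homomorphism componentwise. For injectivity, if $h(b)=0$ then $g(b)=0$, so by exactness $b = f(a)$ for some $a\in A$; applying $j$ gives $a = j(f(a)) = j(b) = 0$, hence $b = f(0) = 0$. For surjectivity, given $(a,c)\in A\oplus C$, use surjectivity of $g$ to pick any $b'\in B$ with $g(b') = c$, and set $b = b' + f(a - j(b'))$. Since $g\circ f = 0$ by exactness, $g(b)=g(b')=c$; and since $j\circ f = \operatorname{Id}_A$, one has $j(b) = j(b') + (a - j(b')) = a$.

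The section case, $i\colon C \to B$ with $g \circ i = \operatorname{Id}_C$, is formally dual. I would define $\psi\colon A\oplus C \to B$ by $\psi(a,c) = f(a) + i(c)$ and show it is an isomorphism: if $f(a)+i(c)=0$, then applying $g$ forces $c=0$, hence $f(a)=0$, hence $a=0$ by injectivity of $f$; and for surjectivity I would observe that for any $b\in B$, the element $b - i(g(b))$ lies in $\ker g = \operatorname{im} f$, so $b = f(a) + i(g(b))$ for the unique $a$ such that $f(a) = b - i(g(b))$. Alternatively, a section induces a retraction by the assignment $b \mapsto f^{-1}(b - i(g(b)))$, which reduces this case to the previous one.

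The argument is essentially formal, driven entirely by the three exactness conditions, so the main obstacle is bookkeeping rather than ideas. The one slightly delicate point is the surjectivity check in the retraction case: one must adjust an arbitrary preimage of the desired $C$-coordinate by an element of $\operatorname{im} f$ in order to hit the prescribed $A$-coordinate simultaneously, and this is precisely the step where the retraction identity $j\circ f = \operatorname{Id}_A$ combines with exactness at $B$ to produce the correction term.
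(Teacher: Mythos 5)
Your proof is correct and complete: both the retraction and section cases are handled by the standard explicit constructions, and each use of exactness (injectivity of $f$, surjectivity of $g$, and $\ker g = \operatorname{im} f$) is in the right place. The paper itself omits the proof entirely, citing it as well-known, and your argument is precisely the standard one being referenced, so there is nothing to compare beyond noting that you have supplied the details the paper chose to skip.
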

It is useful to establish criteria under which a short exact sequence splits. One such criterion is that the third non-trivial group in the sequence is free abelian.
\begin{proposition}
If $0 \to A \to B \stackrel{f}{\to} \mathbb{Z}^n \to 0$ is an SES, then it is split. Further, $B \cong A \oplus \mathbb{Z}^n$.
\end{proposition}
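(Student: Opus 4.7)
The plan is to apply the splitting lemma recalled immediately above, so the entire task reduces to exhibiting a right inverse $i\colon \mathbb{Z}^n \to B$ with $f\circ i = \operatorname{Id}_{\mathbb{Z}^n}$. The hook that makes this possible is that $\mathbb{Z}^n$ is free abelian of finite rank, hence enjoys the universal mapping property: any set-map from a basis to an abelian group extends uniquely to a homomorphism.

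First I would fix the standard basis $e_1,\ldots,e_n$ of $\mathbb{Z}^n$. Surjectivity of $f$ (which is exactness at $\mathbb{Z}^n$, since $\operatorname{im} f$ must equal $\ker(\mathbb{Z}^n \to 0) = \mathbb{Z}^n$) lets me choose, for each $j$, an element $b_j \in B$ with $f(b_j)=e_j$. Only finitely many choices are made, so no appeal to choice is needed. Next I would invoke the universal property of $\mathbb{Z}^n$ to define the unique homomorphism $i\colon \mathbb{Z}^n \to B$ determined by $i(e_j) = b_j$, i.e.\ $i(k_1,\ldots,k_n) = k_1 b_1 + \cdots + k_n b_n$.

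Then I would verify $f\circ i = \operatorname{Id}_{\mathbb{Z}^n}$. Both maps are homomorphisms $\mathbb{Z}^n \to \mathbb{Z}^n$, and they agree on the generating set $\{e_1,\ldots,e_n\}$ since $(f\circ i)(e_j) = f(b_j) = e_j$; hence they coincide on all of $\mathbb{Z}^n$. This exhibits $i$ as a section of $f$, so the SES splits in the sense defined above. Applying the splitting lemma yields an isomorphism $B \cong A \oplus \mathbb{Z}^n$, completing both claims of the proposition.

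There is really no hard step here; the only conceptual point to flag is that the argument depends essentially on the freeness of the right-hand term, not on any specific feature of $\mathbb{Z}^n$ beyond that. If one tried to replace $\mathbb{Z}^n$ by an arbitrary finitely generated abelian group, the universal-property-based lift would fail because relations among the generators would have to be respected, and indeed the conclusion can fail (e.g.\ $0 \to \mathbb{Z} \stackrel{\times 2}{\to} \mathbb{Z} \to \mathbb{Z}/2 \to 0$ does not split). So the only "obstacle" is really a matter of properly invoking freeness; everything else is a direct application of the already-cited splitting lemma.
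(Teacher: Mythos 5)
Your argument is correct and is essentially identical to the paper's own proof: both use surjectivity of $f$ to lift the standard generators $e_j$ to elements $b_j \in B$, extend $e_j \mapsto b_j$ to a homomorphism by freeness of $\mathbb{Z}^n$, check $f \circ i = \operatorname{Id}_{\mathbb{Z}^n}$ on generators, and conclude via the splitting lemma. Your added remark about why freeness is essential is a nice touch but does not change the substance.
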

\begin{proof}
As the sequence is exact, the image of $f$ is the kernel of a homomorphism whose image is trivial, hence $\operatorname{im}f = \mathbb{Z}^n$. Let $\{e_j \mid 1,
\ldots, n \}$ be a set of generating elements of $\mathbb{Z}^n$ and let $b_1, \ldots, b_n$ be such that $f(b_j) = e_j$. Define the map $i \colon \mathbb{Z}^n \to B$ on generators by $i(e_j) = b_j$ and extend this to a homomorphism. We have $f (i (e_j)) = f(b_j) = e_j$ for all $j$ and so $f \circ i = \operatorname{Id}_{\mathbb{Z}^n}$. Hence, the SES splits and, by the splitting lemma, we conclude that $B \cong A \oplus \mathbb{Z}^n$.
\end{proof}
The groups which appear as cohomology groups of tiling spaces are usually presented as a so-called \emph{direct limit}.
\begin{definition}
Let $ A_0 \stackrel{f_0}{\to} A_1 \stackrel{f_1}{\to} A_2 \to \cdots$ be an infinite sequence of abelian groups $A_i$ with homomorphisms $f_i$ between them. We let the pair $(A_i,f_i)$ denote this \emph{direct system}. For $j \geq i$, write $f_{ij} = f_j \circ \cdots \circ f_i \colon A_i \to A_{j+1}$ for the composition of a consecutive subsequence of these homomorphisms.

We define the \emph{direct limit} of a direct system $(A_i,f_i)$ to be the disjoint union of the $A_i$ modulo the equivalence relation $a_i \sim a_j$ for $a_i \in A_i$, $a_j \in A_j$ if there exists $N \geq \max\{i,j\}$ such that $f_{iN}(a_i) = f_{jN}(a_j)$. We write
$$\varinjlim(A_i,f_i) = \bigsqcup A_i \bigg/{\sim}.$$
There is an abelian group operation on $\varinjlim(A_i,f_i)$ given by $[a_i]+[a_j] = [f_{iN}(a_i) + f_{jN}(a_j)]$ where $N = \max\{i,j\}$ and square brackets $[-]$ denote an equivalence class with respect to $\sim$.
\end{definition}
The equivalence relation $\sim$ should be thought of as identifying elements in the disjoint union if they are `eventually equal' under applications of the homomorphisms $f_i$ as they move down the direct system of groups. When the context is clear, we may write a direct limit in terms of the homomorphisms appearing in the direct system $\varinjlim (A_i,f_i) = \varinjlim f_i$.

A diagram of commuting homomorphisms $\phi_i \colon A_i \to B_i$ between direct systems $(A_i,f_i)$ and $(B_i,g_i)$ induces a homomorphism $\phi \colon \varinjlim f_i \to \varinjlim g_i$ by the rule $\phi([a_i]) = [\phi_i(a_i)]$.
\begin{example}
If $f_i$ is an isomorphism for all $i$, then $\varinjlim (A_i,f_i) \cong A_i$ for any of the $A_i$. In particular, an isomorphism $h \colon \varinjlim f_i \to A_0$ is given by $h([a_k]) = f_{0k}^{-1}(a_k)$ for $a_k \in A_k$.
\end{example}
\begin{example}
If the homomorphism $f_n$ is the zero homomorphism, then any element in the direct limit that appears before the $n$th position in the sequence is identified with zero by the equivalence relation $\sim$. It follows that if $f_i$ is the zero homomorphism for infinitely many $i$, then $\varinjlim f_i = 0$.
\end{example}
\begin{example}
Suppose $A_i = \mathbb{Z}$ for all $i$ and the homomorphisms are all given by the doubling map $\times 2 \colon n \mapsto 2n$. The direct limit of this direct system is isomorphic to the set of \emph{dyadic rationals} $\mathbb{Z}[1/2]$---the rational numbers whose denominator is a power of $2$ when written in reduced form. The isomorphism $h \colon \mathbb{Z}[1/2] \to \varinjlim (\mathbb{Z},\times 2)$ is given by $h(n/2^k) = [n_k]$ where $n_k$ is the copy of the integer $n$ that appears in the $k$th group $A_k$ in the direct system.
\end{example}
In general, it is difficult to reduce the direct limit of a sequence of matrices acting on free abelian groups to a form which is more familiar; for instance as a direct sum of free abelian parts and $n$-adic rational parts. In fact, it is often the case that this cannot be done \cite{D:decomposable}. However, in certain cases we can give an explicit description in this form.

To an $(n \times n)$-matrix $M$, we can associate a homomorphism $f \colon \mathbb{Z}^n \to \mathbb{Z}^n$ given by the action of the matrix $M$ on the left of column vectors. That is, $f(v) = Mv$. Without loss of generality, we may then write $\varinjlim M = \varinjlim f = \varinjlim (\mathbb{Z}^n,f_i)$ where $f_i = f$ for all $i$ in the case that every map in the direct system is given by the same action of the matrix $M$.
\begin{example}\label{EX:TM-direct-limit}
Suppose $A_i = \mathbb{Z}^2$ for all $i$ and the homomorphisms $f_i$ are given by the action of the matrix $M = \left[\begin{smallmatrix}1 & 1 \\ 1 & 1\end{smallmatrix}\right]$ on the left of column vectors $\left[\begin{smallmatrix}n \\ m\end{smallmatrix}\right] \in \mathbb{Z}^2$. The direct limit $\varinjlim f_i = \varinjlim \left[\begin{smallmatrix}1 & 1 \\ 1 & 1\end{smallmatrix}\right]$ is isomorphic to the dyadic integers $\mathbb{Z}[1/2]$.

We note that each element $\left[\begin{smallmatrix}n \\ m\end{smallmatrix}\right] \in \mathbb{Z}^2$ in the group $A_k$ has a representative in $A_{k+1}$ which lies on the diagonal subgroup $D = \{\left[\begin{smallmatrix}n \\ n\end{smallmatrix}\right] \mid n \in \mathbb{Z}\} < \mathbb{Z}^2$. In particular, $\left[\begin{smallmatrix}1 & 1 \\ 1 & 1\end{smallmatrix}\right] \left[\begin{smallmatrix}n \\ m\end{smallmatrix}\right] = \left[\begin{smallmatrix}n+m \\ n+m \end{smallmatrix}\right]$, and so the direct limit is isomorphic to the direct limit of the induced action on this subgroup $\varinjlim (D,f_i)$ where $f_i\left[\begin{smallmatrix}n \\ n\end{smallmatrix}\right] = \left[\begin{smallmatrix}2n\\ 2n\end{smallmatrix}\right]$. This direct limit is isomorphic to the direct limit of the doubling map on the integers. The isomorphism is given by mapping $\left[\begin{smallmatrix}n \\ n\end{smallmatrix}\right]$ in the $k$th copy of $D$ to $n$ in the $k$th copy of $\mathbb{Z}$. This is illustrated in the following commutative diagram
$$
		\begin{tikzpicture}[node distance=2cm, auto]
  		\node (00) {$\mathbb{Z}$};
  		\node (20) [above of=00] {$D$};
  		\node (10) [above of=20] {$\mathbb{Z}^2$};
  		\node (01) [right of=00] {$\mathbb{Z}$};
  		\node (11) [right of=10] {$\mathbb{Z}^2$};
  		\node (21) [right of=20] {$D$};
  		\node (02) [right of=01] {$\mathbb{Z}$};
  		\node (12) [right of=11] {$\mathbb{Z}^2$};
  		\node (22) [right of=21] {$D$};
  		\node (03) [right of=02] {$\cdots$};
  		\node (13) [right of=12] {$\cdots$};
  		\node (23) [right of=22] {$\cdots$};
  		
  		\node (L2) [left of=20] {$\varinjlim\left(D,\left[\begin{smallmatrix}1 & 1 \\ 1 & 1\end{smallmatrix}\right]\right):$};
  		\node (L1) [left of=10] {$\varinjlim\left(\mathbb{Z}^2,\left[\begin{smallmatrix}1 & 1 \\ 1 & 1\end{smallmatrix}\right]\right):$};
  		\node (L0) [left of=00] {$\varinjlim\left(\mathbb{Z},\times 2\right):$};

  		\draw[->] (10) to node [swap] {} (20);
  		\draw[->] (11) to node [swap] {} (21);
  		\draw[->] (12) to node [swap] {} (22);
  		\draw[->] (20) to node [swap] {} (00);
  		\draw[->] (21) to node [swap] {} (01);
  		\draw[->] (22) to node [swap] {} (02);
  		\draw[->] (L2) to node [swap] {} (L0);
  		\draw[->] (L1) to node [swap] {} (L2);
  		\draw[->] (00) to node {$\times 2$} (01);
  		\draw[->] (01) to node {$\times 2$} (02);
  		\draw[->] (02) to node {$\times 2$} (03);
  		\draw[->] (10) to node {$\left[\begin{smallmatrix}1 & 1 \\ 1 & 1\end{smallmatrix}\right]$} (11);
  		\draw[->] (11) to node {$\left[\begin{smallmatrix}1 & 1 \\ 1 & 1\end{smallmatrix}\right]$} (12);
  		\draw[->] (12) to node {$\left[\begin{smallmatrix}1 & 1 \\ 1 & 1\end{smallmatrix}\right]$} (13);
  		\draw[->] (20) to node {$\left[\begin{smallmatrix}1 & 1 \\ 1 & 1\end{smallmatrix}\right]$} (21);
  		\draw[->] (21) to node {$\left[\begin{smallmatrix}1 & 1 \\ 1 & 1\end{smallmatrix}\right]$} (22);
  		\draw[->] (22) to node {$\left[\begin{smallmatrix}1 & 1 \\ 1 & 1\end{smallmatrix}\right]$} (23); 
		\end{tikzpicture}
$$
where the top row of vertical homomorphisms are given by $\left[\begin{smallmatrix}n \\ m\end{smallmatrix}\right] \mapsto \left[\begin{smallmatrix}n+m \\ n+m\end{smallmatrix}\right]$, and the bottom row of vertical homomorphisms are given by $\left[\begin{smallmatrix}n \\ n\end{smallmatrix}\right] \mapsto n$. The induced homomorphisms on the direct limits are both isomorphisms by the reasoning above.
Hence, from the previous example, we know that $\varinjlim(\mathbb{Z}^2, \left[\begin{smallmatrix}1 & 1 \\ 1 & 1\end{smallmatrix}\right]) \cong \mathbb{Z}[1/2]$; the dyadic rationals.
\end{example}
Even if the direct limit is not easily parsable, we can at least easily extract the rank of a direct limit of a sequence of matrices when the sequence is constant. Here, the rank $\operatorname{rk}A$ of an abelian group $A$ is the dimension of the $\mathbb{Q}$-vector space $A\otimes \mathbb{Q}$, or equivalently, the cardinality of a maximal linearly independent subset.

If $M$ is an $(n\times n)$-matrix, let $p$ be a power such that the rank of the matrix $M^p$ is the same as the rank of $M^{p+1}$ (after which point the ranks of successive powers remain constant). We call the rank $\operatorname{rk} M^p$ the \emph{eventual rank} of $M$ and let $\operatorname{e-rk}M$ denote this value. The next result is an easy exercise in linear algebra and follows from the definition of the direct limit of a matrix.
\begin{proposition}
$$\operatorname{rk}\varinjlim M = \operatorname{e-rk}M.$$
\end{proposition}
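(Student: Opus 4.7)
The plan is to pass from $\mathbb{Z}$-coefficients to $\mathbb{Q}$-coefficients so that the linear algebra of $M$ becomes tractable. Since $\operatorname{rk} A = \dim_\mathbb{Q}(A \otimes \mathbb{Q})$ and tensor product with $\mathbb{Q}$ commutes with direct limits, I would first establish
$$(\varinjlim M) \otimes \mathbb{Q} \;\cong\; \varinjlim\bigl(\mathbb{Q}^n, M_{\mathbb{Q}}\bigr),$$
where $M_\mathbb{Q}$ denotes the induced $\mathbb{Q}$-linear endomorphism of $\mathbb{Q}^n$. The right-hand side is a $\mathbb{Q}$-vector space whose dimension is what we wish to compute, so the task becomes purely linear-algebraic: show that $\dim \varinjlim M_\mathbb{Q} = \operatorname{e-rk} M$.

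Next, I would invoke the Fitting decomposition. For sufficiently large $p$, the chains $\ker M^p \subseteq \ker M^{p+1}$ and $\operatorname{im} M^p \supseteq \operatorname{im} M^{p+1}$ stabilize simultaneously, yielding an $M$-invariant direct sum decomposition $\mathbb{Q}^n = V_0 \oplus V_\infty$, where $V_0 = \ker M_\mathbb{Q}^p$ and $V_\infty = \operatorname{im} M_\mathbb{Q}^p$. On $V_0$, the restriction of $M$ is nilpotent (in particular $M^p|_{V_0} = 0$), while on $V_\infty$ the restriction is an isomorphism. Because direct limits respect $M$-invariant direct sums, one gets
$$\varinjlim M_\mathbb{Q} \;\cong\; \varinjlim\bigl(V_0, M|_{V_0}\bigr) \;\oplus\; \varinjlim\bigl(V_\infty, M|_{V_\infty}\bigr).$$
Now the two examples computed earlier in the section apply directly: the nilpotent summand contributes $0$ since infinitely many transition maps in its direct system are the zero map, and the invertible summand contributes $V_\infty$ itself since all transition maps there are isomorphisms. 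Therefore $\dim \varinjlim M_\mathbb{Q} = \dim V_\infty = \operatorname{rk} M^p = \operatorname{e-rk} M$, which is the desired identity.

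The main obstacle is the interchange of tensor products and direct limits in the first step, a standard fact about flat modules that has not been introduced in the paper. If the authors prefer a self-contained argument, one can bypass this by working directly with representatives: every class in $\varinjlim M$ has a representative in $\operatorname{im} M^p$ for $p$ sufficiently large, so the canonical map $\operatorname{im} M^p \to \varinjlim M$ is surjective; its kernel is exactly the torsion, since $M$ restricted to $\operatorname{im} M^p$ is injective up to torsion (being rank-preserving). This shows $\varinjlim M$ has rank $\operatorname{rk} M^p = \operatorname{e-rk} M$ without invoking tensor products. Either route requires only routine verifications once the stabilization of $\operatorname{rk} M^k$ is understood, so the real content is the reduction to the Fitting-decomposed picture where $M$ is either nilpotent or invertible on each summand.
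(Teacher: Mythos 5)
Your argument is correct, but there is nothing in the paper to compare it against: the authors explicitly decline to prove this proposition, dismissing it as ``an easy exercise in linear algebra'' that ``follows from the definition of the direct limit of a matrix.'' Your write-up is a legitimate filling-in of that exercise. The main route (tensor with $\mathbb{Q}$, apply the Fitting decomposition $\mathbb{Q}^n = \ker M_{\mathbb{Q}}^p \oplus \operatorname{im} M_{\mathbb{Q}}^p$ at the stabilizing power $p$, and then quote the paper's two worked examples --- zero maps give a trivial limit, isomorphisms give a constant limit) is sound; note only that commuting $-\otimes\mathbb{Q}$ past a direct limit needs no flatness, just the fact that tensoring is a left adjoint and so preserves all colimits. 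Two small points of polish. First, one should check that the paper's $p$ (the first power with $\operatorname{rk}M^p = \operatorname{rk}M^{p+1}$) already yields the Fitting splitting; it does, since $\operatorname{im}M_{\mathbb{Q}}^{p+1}\subseteq\operatorname{im}M_{\mathbb{Q}}^p$ of equal dimension forces equality and hence stabilization of all higher powers. Second, in your ``self-contained'' alternative the phrase ``its kernel is exactly the torsion'' is off: $M^p(\mathbb{Z}^n)$ is a subgroup of $\mathbb{Z}^n$ and hence torsion-free, and the canonical map $M^p(\mathbb{Z}^n)\to\varinjlim M$ from stage $p$ is actually injective ($M$ is rationally injective on the image, and the image is torsion-free). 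A cleaner elementary version of the same idea: $\varinjlim M$ is the increasing union of the subgroups $\iota_j(\mathbb{Z}^n)\cong \mathbb{Z}^n/\ker\iota_j$, where $\ker\iota_j=\{v : M^k v = 0 \text{ for some } k\}$ has rank $n-\operatorname{e-rk}M$, so each term, and hence the union, has rank exactly $\operatorname{e-rk}M$. Either way, the result stands.
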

There is an entire industry devoted to studying direct limits of integer-valued matrices, their invariants, and classifying such groups, possibly with an additional order structure. We refer the reader to the work of Boyle and Handelman \cite{BH:shift-equivalence} and references therein.

Associated with any topological space $X$ is a collection of groups $\check{H}^k(X)$ for all $k \geq 0$ called the \emph{\Cech cohomology groups} of $X$. The \Cech cohomology of a space is a topological invariant. That is, if $X \cong Y$, then $\check{H}^k(X) \cong \check{H}^k(Y)$ for all $k\geq 0$. In particular, if $\phi$ and $\eta$ are conjugate substitutions, then $\check{H}^k(\Omega_\phi) \cong \check{H}^k(\Omega_\eta)$ for all $k\geq 0$. As we are only studying $1$-dimensional tiling spaces of primitive substitutions, it will always be the case for us that $\check{H}^0(\Omega_\phi) = \mathbb{Z}$ and $\check{H}^k(\Omega_\phi) = 0$ for all $k \geq 2$. Hence, the only interesting group to study is $\check{H}^1(\Omega_\phi)$.

		There are many standard texts providing an introduction to \Cech cohomology \cite{B:bott-tu}. \Cech cohomology is an important topological invariant of tiling spaces and it is of general interest to be able to calculate and study these groups even beyond the task of classifying substitutions up to conjugacy \cite{BGG:substitutions,BBJS:homological-pisot,CS:shape}. Grout implements three different methods for calculating the cohomology of tiling spaces associated with primitive aperiodic substitutions on finite alphabets:
		\begin{enumerate}
			\item The method of Barge-Diamond complexes \cite{BD}.
			\item The method of Anderson-Putnam complexes  \cite{AP}.
			\item The method of forming a conjugate \emph{pre-left-proper} substitution \cite{DHS:return-words}.
		\end{enumerate}
		All three outputs are algebraically equivalent---that is, they represent isomorphic groups---but it is not always obvious that this is the case given the presentations that each method produces. Compare Examples \ref{EX:TM-BD}, \ref{EX:TM-AP}, \ref{EX:TM-proper}, where the Thue-Morse substitution is used as a reference example. These presentations take the form of direct limits of matrices (often of different sizes) and, in the case of the Barge-Diamond method, also direct sums of such groups and quotients by free abelian subgroups. This disparity between presentations of results for the equivalent methods was one of the major motivating factors for developing Grout.  These groups are extremely laborious to calculate by hand for large alphabets unless special criteria are met.
\subsection{Via Barge-Diamond}
	 Let $\phi$ be a primitive, recognizable substitution on the alphabet $\mathcal{A}$. Let $\Gamma_\phi$ be the Barge-Diamond complex of $\phi$ and let $S_\phi$ be the subcomplex of $\Gamma_\phi$ formed by all those edges labelled with two-letter admitted words $ij$.
	
	Let $\tilde{\phi}\colon S_\phi \to S_\phi$ be a graph morphism defined in the following way on vertices: let $l(i)$ and $r(i)$ be the leftmost and rightmost letters of $\phi(i)$ so $\phi(i) = l(i) u r(i)$ (note that $u$ may be empty and $l(i)$ and $r(i)$ could overlap if $|\phi(i)| = 1$). Define $\tilde{\phi}(v_i^+) = v_{l(i)}^+$ and $\tilde{\phi}(v_i^-) = v_{r(i)}^-$. Note that if $ij$ is admitted by $\phi$, then $r(i) l(j)$ is also admitted by $\phi$, and so $\tilde{\phi}$ is a well-defined graph morphism on $S_\phi$. As $\tilde{\phi}(S_\phi) \subset S_\phi$, we can define the eventual range $ER_\phi = \bigcap_{m \geq 0} \tilde{\phi}^m(S_\phi)$ (which stabilizes after finitely many substitutions).
	
	Let $\tilde{H}^0(X) = \mathbb{Z}^{k-1}$ where $k$ is the number of connected components of the space $X$---this group is called the \emph{reduced cohomology} of $X$ in degree zero and always satisfies $\check{H}^0(X) = \tilde{H}^0(X) \oplus \mathbb{Z}$. For this method of computation we make use of the following result attributed to Barge and Diamond \cite{BD}.
	\begin{proposition}
		There is an exact sequence
		$$0 \to \tilde{H}^0(ER_\phi) \stackrel{h_1}{\to} \varinjlim M_\phi^T \stackrel{h_2}{\to} \check{H}^1(\Omega_\phi) \stackrel{h_3}{\to} \check{H}^1(ER_\phi) \to 0.$$
	\end{proposition}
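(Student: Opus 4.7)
The plan is to realise $\Omega_\phi$ as an inverse limit involving $\Gamma_\phi$, and then derive the four-term exact sequence from the long exact sequence in cohomology of the pair $(\Gamma_\phi, S_\phi)$ by passing to a direct limit under the substitution-induced maps. First, I would invoke the Barge-Diamond inverse limit presentation: there is a natural continuous cellular self-map $f_\phi \colon \Gamma_\phi \to \Gamma_\phi$ lifting $\phi$, satisfying $f_\phi(S_\phi) \subset S_\phi$, and with the property that $\Omega_\phi \cong \varprojlim(\Gamma_\phi, f_\phi)$. By continuity of \v{C}ech{} cohomology for inverse limits of compact Hausdorff spaces, $\check{H}^*(\Omega_\phi) \cong \varinjlim(H^*(\Gamma_\phi), f_\phi^*)$, and similarly for $S_\phi$.

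Next, I would analyse the quotient $\Gamma_\phi/S_\phi$. By primitivity, every letter appears in some admitted two-letter word, so every vertex of $\Gamma_\phi$ is an endpoint of some transition edge in $S_\phi$; thus collapsing $S_\phi$ identifies all vertices to a single point. The remaining $1$-cells are the $|\mathcal{A}|$ letter edges $v_i^+ \to v_i^-$, each becoming a circle, so $\Gamma_\phi/S_\phi$ is homotopy equivalent to a wedge of $|\mathcal{A}|$ circles. Therefore $H^1(\Gamma_\phi/S_\phi) \cong \mathbb{Z}^{|\mathcal{A}|}$, $\tilde{H}^0(\Gamma_\phi/S_\phi)=0$, and $H^k(\Gamma_\phi/S_\phi)=0$ for $k \geq 2$. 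A direct computation on generators shows that the endomorphism induced by $f_\phi$ on $\mathbb{Z}^{|\mathcal{A}|}$ is $M_\phi^T$: the letter edge $i$ is sent in the quotient to the concatenation of the letter edges appearing in $\phi(i)$, and the resulting abelianised map in the natural basis is the transpose of the substitution matrix.

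Combining this with the long exact sequence of the pair $(\Gamma_\phi, S_\phi)$, using connectedness of $\Gamma_\phi$ (so $\tilde{H}^0(\Gamma_\phi)=0$) and the vanishing of higher cohomology of $\Gamma_\phi/S_\phi$, I obtain the four-term exact sequence
$$0 \to \tilde{H}^0(S_\phi) \to \mathbb{Z}^{|\mathcal{A}|} \to H^1(\Gamma_\phi) \to H^1(S_\phi) \to 0.$$
This sequence is compatible with the endomorphisms induced by $f_\phi$, so applying $\varinjlim$ (which is exact on sequences of abelian groups) yields
$$0 \to \varinjlim \tilde{H}^0(S_\phi) \to \varinjlim M_\phi^T \to \check{H}^1(\Omega_\phi) \to \varinjlim H^1(S_\phi) \to 0,$$
where the middle term is identified via the continuity isomorphism and the identification of $f_\phi^*$ on $\mathbb{Z}^{|\mathcal{A}|}$ as $M_\phi^T$.

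It remains to identify the outer direct limits with $\tilde{H}^0(ER_\phi)$ and $\check{H}^1(ER_\phi)$. By definition $ER_\phi = \bigcap_{m \geq 0}\tilde{\phi}^m(S_\phi)$ stabilises after finitely many steps, so for $m$ large enough, $\tilde{\phi}^m$ factors as $S_\phi \to ER_\phi \hookrightarrow S_\phi$, and the restriction $\tilde{\phi}|_{ER_\phi}\colon ER_\phi \to ER_\phi$ is a surjective self-map of a finite graph that is eventually a bijection on cells, hence induces an isomorphism on cohomology. A standard cofinality argument then replaces the direct system on $H^*(S_\phi)$ by the system on $H^*(ER_\phi)$ under this self-isomorphism, whose direct limit is just $H^*(ER_\phi)$. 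This last identification, together with the verification that $\tilde{\phi}|_{ER_\phi}$ is indeed a cellular equivalence on the stable part, is the main obstacle and the delicate point of the argument; once established, the sequence displayed above becomes exactly the claimed Barge-Diamond sequence.
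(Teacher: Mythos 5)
The paper does not prove this proposition---it is stated with a citation to Barge and Diamond---so there is no in-paper argument to compare against; your proposal is a reconstruction of the argument from that reference, and it is essentially correct. You correctly identify the three ingredients: the inverse-limit presentation $\Omega_\phi \cong \varprojlim(\Gamma_\phi, f_\phi)$ together with continuity of \Cech cohomology; the long exact sequence of the good pair $(\Gamma_\phi, S_\phi)$, with $\Gamma_\phi/S_\phi$ a wedge of $|\mathcal{A}|$ circles on which the induced map in $H^1$ is $M_\phi^T$; and the cofinality/interleaving argument identifying $\varinjlim(\tilde{H}^0(S_\phi),\tilde{\phi}^*)$ and $\varinjlim(H^1(S_\phi),\tilde{\phi}^*)$ with the corresponding groups of $ER_\phi$, using that $\tilde{\phi}$ restricted to the stabilized eventual range is a surjective self-map of a finite cell complex sending cells to cells, hence a bijection on cells. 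The one place where you lean on an unproved black box is the existence of $f_\phi$ with $\Omega_\phi \cong \varprojlim(\Gamma_\phi, f_\phi)$ restricting to $\tilde{\phi}$ on $S_\phi$; this is the real content of the Barge--Diamond construction and requires recognizability, so you should state explicitly that you are citing it rather than proving it. Two small points worth making explicit: connectedness of $\Gamma_\phi$ (needed for $\tilde{H}^0(\Gamma_\phi)=0$) follows from primitivity via strong connectedness of the first Rauzy graph, and the fact that every vertex $v_i^\pm$ lies in $S_\phi$ uses two-sided extendability of the language, both of which you gesture at but do not justify.
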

	This exact sequence can be split into the two short exact sequences
	$$0 \to \tilde{H}^0(ER_\phi) \stackrel{h_1}{\to} \varinjlim M_\phi^T \to \operatorname{im} h_2 \to 0$$
	and
	$$0 \to \operatorname{im} h_2 \to \check{H}^1(\Omega_\phi) \stackrel{h_3}{\to} \check{H}^1(ER_\phi) \to 0.$$
	Using the first isomorphism theorem, we know that $\operatorname{im}h_2$ is isomorphic to $\varinjlim M_\phi^T / \ker h_2$. By exactness, $\ker h_2 = \operatorname{im} h_1$ and as $h_1$ is injective by exactness, the image of $h_1$ is isomorphic to its domain. Hence, $\ker h_2 = \operatorname{im} h_1 \cong \tilde{H}^0(ER_\phi)$. It follows that $\operatorname{im}h_2 \cong \varinjlim M_\phi^T / \tilde{H}^0(ER_\phi)$.
	
	The eventual range $ER_\phi$ is a non-empty (possibly disconnected) graph, and so $\tilde{H}^0(ER_\phi)$ and $\check{H}^1(ER_\phi)$ are finitely generated free abelian groups. Using the splitting lemma on the second short exact sequence allows us to write $\check{H}^1(\Omega_\phi)$ as a direct sum.
	\begin{corollary}\label{COR:BD} Let $\phi$ be a substitution on an alphabet $\mathcal{A}$ with $l$ letters. If the eventual range $ER_\phi$ has $k$ connected components and $\check{H}^1(ER_\phi) \cong \mathbb{Z}^m$, then 
		$$\check{H}^1(\Omega_\phi)\cong \varinjlim (\mathbb{Z}^{l},M_\phi^T)/ \mathbb{Z}^{k-1} \oplus \mathbb{Z}^m.$$
	\end{corollary}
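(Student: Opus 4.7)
The plan is to assemble the result directly from the two short exact sequences already derived from the Barge-Diamond proposition, feeding in the two hypotheses on $ER_\phi$.

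First I would identify $\tilde{H}^0(ER_\phi)$ in terms of the data of the corollary. By the definition given in the excerpt, $\tilde{H}^0(X) = \mathbb{Z}^{k-1}$ whenever $X$ has $k$ connected components, so the hypothesis that $ER_\phi$ has $k$ components immediately gives $\tilde{H}^0(ER_\phi) \cong \mathbb{Z}^{k-1}$. Combining this with the isomorphism $\operatorname{im} h_2 \cong \varinjlim M_\phi^T / \tilde{H}^0(ER_\phi)$ already derived from the first short exact sequence via injectivity of $h_1$ and the first isomorphism theorem, I obtain
$$\operatorname{im} h_2 \cong \varinjlim(\mathbb{Z}^{l}, M_\phi^T) / \mathbb{Z}^{k-1}.$$

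Next I would apply the splitting lemma to the second short exact sequence
$$0 \to \operatorname{im} h_2 \to \check{H}^1(\Omega_\phi) \stackrel{h_3}{\to} \check{H}^1(ER_\phi) \to 0.$$
By hypothesis $\check{H}^1(ER_\phi) \cong \mathbb{Z}^m$ is free abelian of finite rank, and the proposition in the excerpt guarantees that any short exact sequence of abelian groups ending in $\mathbb{Z}^m$ splits. That proposition produces an isomorphism $\check{H}^1(\Omega_\phi) \cong \operatorname{im} h_2 \oplus \mathbb{Z}^m$. Substituting the expression for $\operatorname{im} h_2$ from the previous paragraph yields the claimed identification
$$\check{H}^1(\Omega_\phi) \cong \varinjlim(\mathbb{Z}^{l}, M_\phi^T) / \mathbb{Z}^{k-1} \oplus \mathbb{Z}^m.$$

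The entire argument is essentially bookkeeping once the Barge-Diamond exact sequence is in hand, and I do not anticipate a genuine obstacle. The one point worth verifying is that the hypotheses of the splitting proposition really are met; this reduces to the assertion that $\check{H}^1(ER_\phi)$ is finitely generated free abelian, which is automatic because $ER_\phi$ is a finite graph (hence homotopy equivalent to a wedge of circles, and its first cohomology is free of finite rank). All of the substantive topology is absorbed into the Barge-Diamond proposition, which I am assuming as stated.
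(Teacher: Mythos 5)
Your proposal is correct and follows essentially the same route as the paper: the paper likewise splits the Barge--Diamond exact sequence into the two short exact sequences, identifies $\operatorname{im}h_2 \cong \varinjlim M_\phi^T/\tilde{H}^0(ER_\phi) \cong \varinjlim M_\phi^T/\mathbb{Z}^{k-1}$ via exactness and the first isomorphism theorem, and then invokes the splitting of the second sequence because $\check{H}^1(ER_\phi)$ is finitely generated free abelian (as $ER_\phi$ is a non-empty, possibly disconnected, finite graph). No gaps.
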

	
	Grout displays the \Cech cohomology in the above form when using the Barge-Diamond method.
	
	Recall that the Euler characteristic $\chi$ of a graph $G$ is given by $\chi = E - V$ where $E$ and $V$ are the number of edges and vertices of $G$ respectively. The Euler characteristic is also given as an alternating sum of ranks of cohomology groups; $\chi = \operatorname{rk}\check{H}^1(G) - \operatorname{rk}\check{H}^0(G) = \operatorname{rk}\check{H}^1(G) - \operatorname{rk}\tilde{H}^0(G) - 1$. If $G$ has $k$ connected components, then $\operatorname{rk}\tilde{H}^0(G) = k-1$ and so $m = \operatorname{rk}\check{H}^1(G) = E - V + k$.
	
	When using this method via Corollary \ref{COR:BD}, the only involved part of the calculation is finding the eventual range of the Barge-Diamond complex. The number of connected components of the eventual range then allows us to calculate its cohomology in degree zero. We can then calculate the cohomology in degree one by evaluating the above formula for $m$ derived from the Euler characteristic.
	
	\begin{example}\label{EX:TM-BD}
	Let $\phi \colon 0 \mapsto 01, 1 \mapsto 10$ be the Thue-Morse substitution. The Thue-Morse substitution is primitive and recognizable and has as its two-letter admitted words the set $\mathcal{L}^2_\phi = \{00,01,10,11\}$. The BD complex $\Gamma_\phi$ for $\phi$ is given in Figure \ref{FIG:BD-TM}. We calculate $\tilde{\phi}$ on the four vertices of $\Gamma_\phi$ to be 
	$$\tilde{\phi} \colon
	\begin{cases}
	v_0^+  \mapsto v_{l(0)}^+ & = v_{0}^+\\
	v_0^-  \mapsto v_{r(0)}^- & = v_1^-\\
	v_1^+  \mapsto v_{l(1)}^+ & = v_0^+\\
	v_1^-  \mapsto v_{r(1)}^- & = v_1^-
	\end{cases}$$
	and so on edges it acts by mapping $[00] \mapsto [10], [01] \mapsto [11], [10] \mapsto [01], [11] \mapsto [01]$ where we use square braces $[ij]$ to distinguish the two-letter word $ij$ from the edge in $\Gamma_\phi$ labeled by that word. As a map on the subcomplex $S_\phi$, $\tilde{\phi}$ acts by permuting edges. In particular, $ER_\phi = S_\phi$. As $S_\phi$ is topologically a circle, which has one connected component, we see that $k = 1$. As $S_\phi$ has four edges and four vertices, we see that $E = V = 4$. This gives $m = E - V + k = 1$. We calculate that $\tilde{H}^0(S_\phi) = \mathbb{Z}^{k-1} = \mathbb{Z}^0 = 0$ and $\check{H}^1(S_\phi) = \mathbb{Z}^m = \mathbb{Z}$. By Corollary \ref{COR:BD}, this tells us that $\check{H}^1(\Omega_\phi) \cong \varinjlim \left[\begin{smallmatrix} 1 & 1 \\ 1 & 1 \end{smallmatrix}\right] \oplus \mathbb{Z}$. By Example \ref{EX:TM-direct-limit}, $$\check{H}^1(\Omega_\phi) \cong \mathbb{Z}[1/2] \oplus \mathbb{Z}.$$
	
	\begin{figure}[H]
\centering

\begin{tikzpicture}[->, node distance=2cm, auto] 
\clip (-5,-3.4) rectangle (5,3.5);
\node [fill,circle,draw,inner sep = 0pt, outer sep = 0pt, minimum size=2mm] (ai) at (2.000000,0.000000) {}; 
\node [fill,circle,draw,inner sep = 0pt, outer sep = 0pt, minimum size=2mm] (ao) at (0.000593,2.000000) {}; 
\node [fill,circle,draw,inner sep = 0pt, outer sep = 0pt, minimum size=2mm] (bi) at (-2.000000,0.001185) {}; 
\node [fill,circle,draw,inner sep = 0pt, outer sep = 0pt, minimum size=2mm] (bo) at (-0.001778,-1.999999) {}; 
\draw (ai) edge[bend right=110, looseness=3, ->, ultra thick, color=red] node {$1$}(ao); 
\draw (bi) edge[bend right=110, looseness=3, ->, ultra thick, color=blue] node {$0$}(bo); 
\draw [-,ultra thick, bend right, draw=white, line width=6pt, looseness=0.7] (bo) to (ai); 
\draw [->,ultra thick, bend right, looseness=0.7, swap] (bo) to node {$10$} (ai); 
\draw [-,ultra thick, bend right, draw=white, line width=6pt, looseness=0.7] (ao) to (bi); 
\draw [->,ultra thick, bend right, looseness=0.7, swap] (ao) to node {$01$} (bi); 
\draw [-,ultra thick, bend right, draw=white, line width=6pt, looseness=0.7] (bo) to (bi); 
\draw [->,ultra thick, bend left, looseness=0.7] (bo) to node {$00$} (bi);
\draw [-,ultra thick, bend right, draw=white, line width=6pt, looseness=0.7] (ao) to (ai); 
\draw [->,ultra thick, bend left, looseness=0.7] (ao) to node {$11$} (ai); 

\end{tikzpicture}
\caption{The Barge-Diamond complex for the Thue-Morse substitution.}
\label{FIG:BD-TM}
\end{figure}
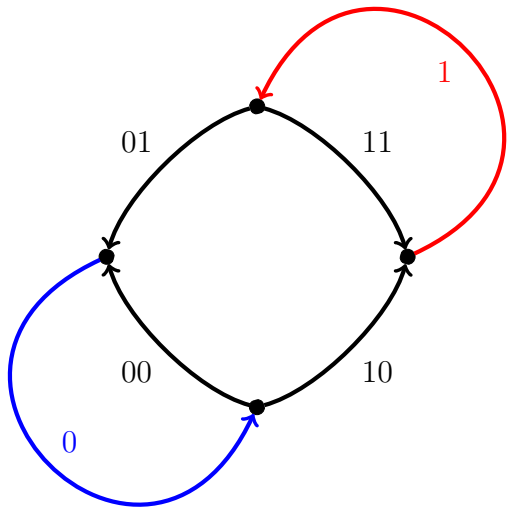

	\end{example}
	
\subsection{Via Anderson-Putnam}\label{SEC:ap-cohomology}
	The second technique for calculating cohomology is via the Anderson-Putnam complex. This method cannot as easily be reduced to the application of combinatorial tricks, such as the Euler characteristic formula and the splitting lemma, in order to circumvent complicated computational problems in homological algebra. We recommend the reader be familiar with simplicial and/or cellular (co)homology $H^i$ when calculating cohomology using this method. In particular, the reader should be comfortable with calculating homomorphisms on cohomology groups induced by continuous maps. A standard text for an introduction to these concepts is Hatcher \cite{H:book}.

	Let $\phi$ be a primitive, recognizable substitution on the alphabet $\mathcal{A}$. Let $K_\phi$ be the Anderson-Putnam complex of $\phi$. Anderson and Putnam have shown that the \Cech cohomology of $\Omega_\phi$ is determined by the direct limit of an induced homomorphism acting on the cohomology of what is now called the Anderson-Putnam complex \cite{AP}. Using the modified AP complex $K_\phi$, G\"{a}hler and Maloney have shown that this complex, as defined in Section \ref{SEC:complexes}, can be used in place of the originally defined AP complex \cite{GM:multi-one-d}. We define the map acting on the AP complex $K_\phi$ in the following way.
	
	Suppose we have an edge $e$ with label $ijk$. Suppose $\phi(i) = i_1 i_2 \cdots i_{|\phi(i)|}$ and similarly for $\phi(j)$ and $\phi(k)$. Define a continuous map called the \emph{collared substitution} $\tilde{\phi} \colon K_\phi \to K_\phi$ by mapping the edge $e$ to the ordered collection of edges with labels
	$$
	[i_{|\phi(i)|} j_1 j_2]
	[j_1 j_2 j_3]
	\cdots
	[j_{|\phi(j)|-2} j_{|\phi(j)|-1} j_{|\phi(j)|}]
	[j_{|\phi(j)|-1} j_{|\phi(j)|} k_1]
	$$
	in an orientation preserving way (the parametrization of the map will not matter).
	
	Let $H^i$ be ordinary simplicial cohomology and, if $f \colon X \to Y$ is a continuous map between topological spaces, let $f^* \colon H^i(Y) \to H^i(X)$ be the induced homomorphism between cohomology groups.
	\begin{example}
	Let $\phi \colon 0 \mapsto 001, 1 \mapsto 01$ be the proper version of the Fibonacci substitution (this will be made formal in Section \ref{SEC:proper}). The word $010$ is admitted by $\phi$ and so there is an edge labelled by $[010]$ in the AP complex $K_\phi$. This edge is mapped by the collared substitution $\tilde{\phi}$ to the collection of edges $[101] [010]$. Similarly, the collared substitution acts on the full list of edges of $K_\phi$ by the rule
	$$
	\begin{array}{rcl}
	[001] & \mapsto & [100] [001] [010]\\
	{[010]} & \mapsto & [101] [010]\\
	{[100]} & \mapsto & [100] [001] [010] \\
	{[101]} & \mapsto & [100] [001] [010].
	\end{array}
	$$
	\end{example}
	The map $\tilde{\phi}$ induces a homomorphism $\tilde{\phi}^\ast \colon H^1(K_\phi) \to H^1(K_\phi)$ on cohomology. We use the following result \cite{GM:multi-one-d} (Anderson and Putnam proved this result in the case that $K_\phi$ is replaced with the original definition of the AP complex \cite{AP}).
	\begin{theorem}\label{THM:AP}
	If $\phi$ is a primitive, recognizable substitution, then there is an isomorphism
		$$\check{H}^1(\Omega_\phi) \cong \varinjlim (H^1(K_\phi), \tilde{\phi}^*).$$
	\end{theorem}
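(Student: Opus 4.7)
The plan is to realize the tiling space $\Omega_\phi$ as an inverse limit of copies of the AP complex $K_\phi$ under the collared substitution $\tilde{\phi}$, and then invoke the continuity axiom of \v{C}ech cohomology, which converts inverse limits of compact Hausdorff spaces into direct limits on cohomology. Concretely, I will construct a homeomorphism
$$\Pi \colon \Omega_\phi \xrightarrow{\ \cong\ } \varprojlim\bigl(K_\phi, \tilde{\phi}\bigr),$$
and then apply
$$\check{H}^1\bigl(\varprojlim(K_\phi, \tilde{\phi})\bigr) \;\cong\; \varinjlim\bigl(\check{H}^1(K_\phi), \tilde{\phi}^*\bigr) \;\cong\; \varinjlim\bigl(H^1(K_\phi), \tilde{\phi}^*\bigr),$$
where the last identification uses that $K_\phi$ is a finite CW complex so \v{C}ech and simplicial cohomology agree.

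For the construction of $\Pi$, given a tiling $T \in \Omega_\phi$ represented by $(s,t)$, the tile containing the origin together with its immediate right and left neighbours is labelled by a three-letter admitted word $ijk$, and the parameter $t \in [0,1]$ picks out a point on the edge $[ijk]$ of $K_\phi$ (with the usual identifications at vertices when $t \in \{0,1\}$). This gives a continuous map $\pi \colon \Omega_\phi \to K_\phi$. Recognizability of $\phi$ (guaranteed here as a hypothesis) provides for each $T$ a unique predecessor tiling $T^{(1)}$ obtained by uniquely grouping consecutive tiles into supertiles of the form $\phi(a)$ and then rescaling to unit length; iterating yields a sequence $T^{(0)}=T, T^{(1)}, T^{(2)}, \ldots$ all in $\Omega_\phi$. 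Setting $\Pi(T)=(\pi(T^{(n)}))_{n\geq 0}$, the way $\tilde{\phi}$ was defined (mapping the edge $[ijk]$ to the precise sequence of three-letter edges arising from $\phi(j)$ collared on left and right by the last letter of $\phi(i)$ and the first letter of $\phi(k)$) ensures exactly the compatibility condition $\tilde{\phi}(\pi(T^{(n+1)}))=\pi(T^{(n)})$, so $\Pi$ lands in the inverse limit.

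Injectivity of $\Pi$ follows because primitivity forces the supertiles in $T^{(n)}$ to grow without bound, so the sequence of central patches determines $T$. Surjectivity amounts to showing that any compatible sequence in $\varprojlim(K_\phi,\tilde{\phi})$ assembles into a legal tiling: each level supplies a larger central patch, and the compatibility guarantees these patches nest consistently and all factors are admitted by $\phi$. Continuity of $\Pi$ and its inverse follows from the standard metric on tiling spaces, since $\epsilon$-closeness of tilings corresponds to closeness in arbitrarily deep factors of the inverse limit.

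The main obstacle is justifying that the \emph{modified} AP complex $K_\phi$ of Definition \ref{DEF:ap-complex}, rather than the original Anderson-Putnam complex with its finer vertex identifications, still yields the correct inverse limit model. The crux is that in our setting, all collaring information needed to define $\tilde\phi$ unambiguously is already recorded by the two-letter vertex labels and three-letter edge labels: when an edge $[ijk]$ is subdivided according to $\phi(j)$, the interior edges $[j_{r-1}j_rj_{r+1}]$ depend only on $j$, while the two boundary edges depend on the adjacent letters of $\phi(i)$ and $\phi(k)$ — so $\tilde\phi$ is well-defined precisely when our vertex identifications are compatible, which they are by construction. Once this is in hand, continuity of \v{C}ech cohomology (Eilenberg–Steenrod continuity axiom for compact Hausdorff inverse limits) and the identification of induced maps closes the argument; the references to G\"{a}hler–Maloney \cite{GM:multi-one-d} handle the technical step that replacing the original AP complex with $K_\phi$ does not change the resulting direct limit.
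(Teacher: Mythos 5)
The paper does not actually prove this theorem; it quotes it, attributing the original-complex version to Anderson--Putnam \cite{AP} and the modified-complex version to G\"{a}hler--Maloney \cite{GM:multi-one-d}. Your outline reconstructs exactly the standard argument behind those citations---present $\Omega_\phi$ as $\varprojlim(K_\phi,\tilde{\phi})$, using recognizability to define the predecessor map and primitivity to get injectivity, then apply the continuity of \v{C}ech cohomology for inverse limits of compact Hausdorff spaces---and you correctly isolate the one genuinely delicate step (that the modified AP complex, with its coarser vertex identifications, still yields the correct answer) and delegate it to \cite{GM:multi-one-d}, precisely as the paper itself does.
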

	Let $R = \mbox{rk} H^1(K_\phi)$, the rank of the cohomology of $K_\phi$. Grout finds an explicit generating set of cocycles for the cohomology of $K_\phi$ and then outputs the induced homomorphism $\tilde{\phi}^*$ as the associated $(R \times R)$-matrix $M_{AP}$, which should be interpreted as acting on $\mathbb{Z}^R$ with respect to this generating set. So $\check{H}^1(\Omega_\phi) \cong \varinjlim M_{AP}$.  
	
	The algorithm for this computation begins by constructing the boundary matrix for the AP complex.  To do this, take all of the admitted three-letter words $abc$ and use the convention that the boundary of such an edge is $bc-ab$. Construct the associated $m \times n$ boundary matrix $B$, where $m$ is the number of two-letter words admitted by $\phi$ and $n$ is the number of three-letter words admitted by $\phi$. The entries of $B$ are given by
	$$(B)_{abc,xy} =
	\begin{cases}
	1, & \text{if $xy = bc$;}\\
	-1, & \text{if $xy = ab$;}\\
	0, & \text{otherwise.}
	\end{cases}
	$$
	Using standard methods from linear algebra, we find a maximal set of linearly independent $n$-dimensional integer vectors $g$ such that $Bg = 0$, searching over the set of all vectors of $0$s and $1$s. By construction, this set generates the kernel of the boundary homomorphism inside the simplicial $1$-chain group of $K_\phi$. This gives us a generating set of cycle vectors for the first homology of $K_\phi$.
	
	We then apply the homomorphism induced by the collared substitution on the chain groups to each of these generating vectors, giving us a new set of image vectors.  Using Gaussian elimination, we find the coordinates of these image vectors in terms of the generating vectors.  This induced homomorphism on homology can be represented as a square matrix. The transpose of this matrix $M_{AP}$ then represents the induced homomorphism on cohomology. The matrix $M_{AP}$ is the output for the cohomology calculation via the Anderson-Putnam method.  It should be noted that this algorithm is not efficient in the case where the substitution admits many three-letter words, as the dimension $m$ of the $1$-chain complex is the dominant limiting factor when finding linearly independent generating cycles. The time complexity increases exponentially with respect to $m$.
	\begin{example}\label{EX:TM-AP}
	Let $\phi \colon 0 \mapsto 01, 1 \mapsto 10$ be the Thue-Morse substitution. The admitted two-letter words for $\phi$ are given by $\mathcal{L}^2_\phi =\{00, 01, 10, 11\}$ and  the admitted three-letter words for $\phi$ are given by $\mathcal{L}^3_\phi = \{001, 010, 011, 100, 101, 110\}$. The AP complex $K_\phi$ for $\phi$ is given in Figure \ref{FIG:AP-TM}.
	
	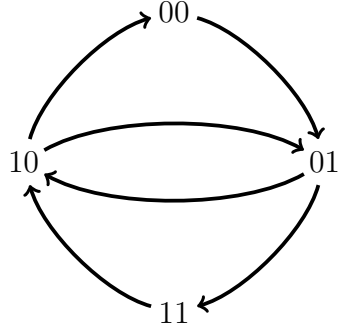
\begin{figure}[H]
\centering
\begin{tikzpicture}[->, node distance=3cm, auto, text=black, line width=0.5mm] 
\clip (-2.5,-2.5) rectangle (2.5,2.5);
\node [circle,inner sep = 0pt, outer sep = 2pt, minimum size=2mm] (abi) at (2.000000,0.000000) {$01$}; 
\node [circle,inner sep = 0pt, outer sep = 2pt, minimum size=2mm] (aai) at (0,2) {$00$};
\node [circle,inner sep = 0pt, outer sep = 2pt, minimum size=2mm] (bai) at (-2,0) {$10$}; 
\node [circle,inner sep = 0pt, outer sep = 2pt, minimum size=2mm] (bbi) at (0,-2) {$11$};

\draw [->, bend left, looseness=0.7] (abi) to (bbi); 

\draw [->, bend left, looseness=0.7] (bai) to (abi); 

\draw [->, bend left, looseness=0.7] (bbi) to (bai);

\draw [->, bend left, looseness=0.7] (abi) to (bai);

\draw [->, bend left, looseness=0.7] (bai) to (aai);

\draw [->, bend left, looseness=0.7] (aai) to (abi);

\end{tikzpicture}
\caption{The AP complex for the Thue-Morse substitution.}\label{FIG:AP-TM}
\end{figure}

The graph $K_\phi$ has Euler characteristic $\chi = |\mathcal{L}^3_\phi| - |\mathcal{L}^2_\phi| = 2$. As $K_\phi$ is connected, it follows that $\operatorname{rk}H^0(K_\phi) = 1$. We deduce that $R = \operatorname{rk}H^1(K_\phi) = \chi + \operatorname{rk}H^0(K_\phi) = 3$.

The boundary matrix $B$ for $K_\phi$ is
$$B =
\left[
\begin{matrix}
-1 & 0 & 0 & 1 & 0 & 0\\
1 & -1 & -1 & 0 & 1 & 0\\
0 & 1 & 0 & -1 & -1 & 1\\
0 & 0 & 1 & 0 & 0 & -1
\end{matrix}
\right].$$
We find three linearly independent vectors $v_1, v_2, v_3$ such that $Bv_i = 0$ by inspection:
$$
v_1 = \left[\begin{matrix}0\\1\\0\\0\\1\\0\end{matrix}\right],
v_2 = \left[\begin{matrix}1\\0\\1\\1\\0\\1\end{matrix}\right],
v_3 = \left[\begin{matrix}0\\0\\1\\0\\1\\1\end{matrix}\right].
$$
The action of substitution on the cycles represented by these vectors is $\tilde{\phi}^*(v_1) = v_2, \tilde{\phi}^*(v_2) = 2v_1 + v_2, \tilde{\phi}^*(v_3) = v_1 + v_2 $ and so the induced homomorphism on homology is represented by the matrix
$$\tilde{M}_{AP} =
\left[
\begin{matrix}
0 & 2 & 1 \\
1 & 1 & 1 \\
0 & 0 & 0
\end{matrix}
\right].$$
On cohomology, the action is given by the transpose of $\tilde{M}_{AP}$. That is, $M_{AP} = \tilde{M}_{AP}^T$. By Theorem \ref{THM:AP}, we calculate cohomology of the tiling space to be $$\check{H}^1(\Omega_\phi) \cong \varinjlim
\left[
\begin{matrix}
0 & 1 & 0 \\
2 & 1 & 0 \\
1 & 1 & 0
\end{matrix}
\right].$$
	\end{example}
	
\subsection{Via properization}\label{SEC:proper}
	For this method of computation we make use of a technique involving the return words that were introduced in Section \ref{SEC:recog}, as outlined in work of Durand, Host and Skau \cite{DHS:return-words}, for replacing a primitive substitution with a conjugate \emph{pre-left-proper} primitive substitution. One may then use the fact that if $\phi$ is a recognizable pre-left-proper primitive substitution, then $\check{H}^1(\Omega_\phi)\cong \varinjlim M_\phi^T$ \cite{S:book}.

	We begin by defining what it means to be proper.

	\begin{definition}\leavevmode
		\item A substitution is \emph{left-proper} if there exists a letter $i \in \mathcal{A}$ such that the leftmost letter of $\phi(j)$ is $i$ for all $j \in \mathcal{A}$. That is, $\phi(j) = i w_j$ for some $w_j \in \mathcal{A}^\ast$.
		\item A substitution is \emph{right-proper} if there exists a letter $i \in \mathcal{A}$ such that the rightmost letter of $\phi(j)$ is $i$ for all $j \in \mathcal{A}$. That is, $\phi(j) = w_j i$ for some $w_j \in \mathcal{A}^\ast$.
		\item A substitution is \emph{fully-proper} (we will often abbreviate this to just \emph{proper}) if it is both left-proper and right-proper.
		\item A substitution is \emph{pre-left-proper} if some power of the substitution is left-proper.
		\item A substitution is \emph{pre-right-proper} if some power of the substitution is right-proper.
		\item A substitution is \emph{pre-proper} if some power of the substitution is proper.
\end{definition}

	The following algorithm produces what we call the \emph{pre-left-properization} of a substitution, so-called because there exists a finite power of the new substitution which is left-proper. As usual, $k$ is the one from the definition of the fixed letter $a$. 

	Note that if $v$ is a return word to the fixed letter $a$, then $va \in \mathcal{L}_\phi$ and so $\phi^k(va)$ must also be admitted by $\phi$. We have $\phi^k(va) = \phi^k(v) \phi^k(a)$, and both of $\phi^k(v)$ and $\phi^k(a)$ begin with the fixed letter $a$, hence $\phi^k(v)$ is an exact concatenation of return words to $a$. So, if we apply $\phi^k$ to a return word, then the result is a unique composition of return words.  We let $\eta$ denote this newly constructed substitution rule on the new alphabet $\mathcal{R}_{\phi,a}$ of return words.

	This process gives a new primitive substitution on a possibly larger alphabet \cite{DHS:return-words}. By taking a power of this substitution, we arrive at a left-proper substitution. It is clear that such a power exists. Indeed, every return word $v \in \mathcal{R}_{\phi,a}$ begins with the fixed letter $a$ and, by primitivity, $\phi^n(a)$ contains at least two copies of the letter $a$ for large enough $n$, so $\phi^n(a) = v_0 a u$ for some return word $v_0 \in \mathcal{R}_{\phi,a}$ and some other word $u$. Then $\eta^n(v)$ must begin with $v_0$. It follows that $\eta^n$ is a left-proper substitution with leftmost letter $v_0$. 

	We may also form an associated proper substitution on $\mathcal{R}_{\phi,a}$ by composing $\eta^n$ with its \emph{right conjugate}. The right conjugate $\phi^{(R)}$ of a left-proper substitution $\phi$ is given by setting $\phi^{(R)}(b) = w_ba$ where $a$ is the fixed letter such that $\phi(b) = aw_b$ for all $b\in\mathcal{A}$. The right conjugate is a right-proper substitution, and the composition of a left-proper and right-proper substitution is both left-proper and right-proper, hence fully-proper.  It is easy to show that $X_{\phi\circ \phi^{(R)}}$ and $X_\phi$ are topologically conjugate subshifts \cite{DL:properisation}. In fact, a word is admitted by $\phi\circ\phi^{(R)}$ if and only if it is admitted by $\phi$, so $X_{\phi\circ \phi^{(R)}}$ and $X_\phi$ are equal. Hence, $\check{H}^1(\Omega_{\phi\circ \phi^{(R)}}) \cong \check{H}^1(\Omega_\phi)$.
	\begin{example}
	The Fibonacci substitution $\phi \colon 0 \mapsto 01 , \mapsto 0$ is left-proper. Its right conjugate is given by $\phi^{(R)} \colon 0 \mapsto 10, 1 \mapsto 0$ which is right-proper. Their composition is given by $\phi \circ \phi^{(R)} \colon 0 \mapsto 001, 1 \mapsto 01$ which is fully-proper.
	\end{example}
	
	We make use of the following which is a consequence of results appearing in the work of Durand, Host and Skau \cite{DHS:return-words}.
	\begin{proposition}
		Let $\phi$ be a primitive substitution on $\mathcal{A}$ and let $\eta$ be the pre-left-properization of $\phi$. The tiling space $\Omega_\eta$ is homeomorphic to $\Omega_\phi$.
	\end{proposition}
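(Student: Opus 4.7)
The plan is to construct an explicit homeomorphism $\Phi\colon \Omega_\eta \to \Omega_\phi$ by ``unfolding'' return-word tiles into their constituent letter tiles. By primitivity, the fixed letter $a$ occurs with positive frequency in every $s \in X_\phi$ on both sides of the origin, and $\mathcal{R}_{\phi,a}$ is finite with uniformly bounded word lengths. Consequently, every $s \in X_\phi$ admits a unique bi-infinite decomposition $s = \cdots v_{-1} v_0 v_1 \cdots$ into return words, normalized so that the leftmost letter of $v_0$ occupies the least position $\geq 0$ at which $a$ occurs in $s$. I would first verify that the resulting sequence $(v_i)_{i\in\Z}$ lies in $X_\eta$: any factor of $(v_i)$ arises in the return-word decomposition of some factor of $\phi^N(a)$, and an induction on $N$ using the defining rule (that $\eta(v)$ is the return-word decomposition of $\phi^k(v)$) shows these factors are exactly those admitted by $\eta$.

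I would then define $\Phi$ on a representative $(w,t) \in X_\eta \times [0,1]$ by interpreting the return word $w_0$ geometrically as an interval of length $|w_0|$ covering $[-t|w_0|,\, (1-t)|w_0|] \subset \R$, placing the other $w_i$ to either side in the natural order, and finally unfolding each such interval into its $|w_i|$ constituent unit-length letter-tiles. This produces a tiling of $\R$ by unit-length letter tiles, i.e.\ a point of $\Omega_\phi$. The inverse $\Phi^{-1}$ is induced by the return-word decomposition above, with the offset of the origin within the ambient return-word tile rescaled back to $[0,1]$. Bijectivity is immediate from uniqueness of the decomposition.

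The key checks are well-definedness on the quotient $\Omega_\eta$ and continuity. Well-definedness requires $\Phi(w,1) = \Phi(\sigma w, 0)$: at $t=1$ the origin sits at the right boundary of the $w_0$-tile, equivalently at the left boundary of the $w_1$-tile in $\sigma w$, yielding the same letterwise tiling of $\R$ and matching the identification $(w,1) \sim (\sigma w, 0)$. Continuity requires that close $w,w' \in X_\eta$ and close $t,t'$ produce tilings close in the tiling space metric; this will follow from the uniform bound $|v| \leq L$ for $v \in \mathcal{R}_{\phi,a}$, since agreement of many return words around position $0$ in $X_\eta$ forces agreement of the letterwise tilings on a window of at least proportional size, with offsets differing by a uniformly controlled amount. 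The main obstacle is the bookkeeping of the piecewise-linear rescaling $t \mapsto t|w_0|$ across the quotient identification, in particular coordinating the rescaling choices at $t=0$ and $t=1$ as $|w_0|$ varies; once this is handled, the fact that $\Phi$ is a homeomorphism follows from a standard continuous bijection argument on compact Hausdorff spaces.
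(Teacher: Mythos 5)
Your construction is sound, but it is worth noting that the paper does not actually prove this proposition at all: it states it as ``a consequence of results appearing in the work of Durand, Host and Skau'' and moves on. So your argument is a genuinely different (and more self-contained) route --- an explicit homeomorphism obtained by unfolding return-word tiles into letter tiles, rather than an appeal to the derived-sequence machinery of that reference. The two places where your sketch is really carrying the load are (i) the claim that the return-word decomposition of an arbitrary $s \in X_\phi$ lies in $X_\eta$ and, conversely, that every element of $X_\eta$ arises this way (needed for surjectivity); this language correspondence between $\mathcal{L}_\phi$-factors aligned on occurrences of $a$ and $\mathcal{L}_\eta$ is precisely the content of the cited Durand--Host--Skau results, and your induction on $N$ via $\eta(v) = $ return-word decomposition of $\phi^k(v)$ is the right way to reprove it, with the helpful observation that alignment is automatic because the decomposition is determined solely by the positions of the letter $a$ (so, unlike decomposition into words $\phi(b)$, no recognizability is needed); and (ii) the fact that $a$ occurs syndetically (with bounded gaps, not merely positive frequency) in every $s \in X_\phi$, which primitivity does supply and which you implicitly use when you say $\mathcal{R}_{\phi,a}$ is finite with uniformly bounded lengths. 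Granting those, the well-definedness check at $t=1$, the continuity estimate via the uniform bound $|v| \le L$, and the compact-Hausdorff continuous-bijection argument all go through; the resulting map is a homeomorphism but not a conjugacy of the $\mathbb{R}$-actions (it reparametrizes time by $|w_0|$ on each fiber), which is all the proposition asks for. What your approach buys is an explicit map and independence from the cited reference; what the paper's citation buys is brevity and the stronger structural statements (orbit equivalence, the tower picture) that come packaged with the Durand--Host--Skau framework.
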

	Hence we get a corollary.
	\begin{corollary}\label{COR:pre-left}
	$$\check{H}^1(\Omega_\eta) \cong \check{H}^1(\Omega_\phi).$$
	\end{corollary}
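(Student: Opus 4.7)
The plan is essentially a one-line deduction from the proposition immediately preceding the corollary together with the general principle, already invoked earlier in the section, that \v{C}ech cohomology is a topological invariant. So the first step is to recall that the preceding proposition provides a homeomorphism $\Omega_\eta \to \Omega_\phi$ between the two tiling spaces, built from the pre-left-properization procedure of Durand--Host--Skau.

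Next, I would invoke the topological invariance of \v{C}ech cohomology: if $X \cong Y$ as topological spaces, then $\check{H}^k(X) \cong \check{H}^k(Y)$ for every $k \geq 0$. This principle was stated explicitly in the introductory discussion of \v{C}ech cohomology in Section \ref{SEC:cohomology} (prior to Corollary \ref{COR:BD}), and I would simply apply it in the case $k=1$ with $X = \Omega_\eta$ and $Y = \Omega_\phi$. The homeomorphism from the preceding proposition induces an isomorphism on cohomology, yielding $\check{H}^1(\Omega_\eta) \cong \check{H}^1(\Omega_\phi)$.

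There is no real obstacle here, since all the substantive work has already been carried out in constructing the pre-left-properization $\eta$ and verifying (via the cited work of Durand--Host--Skau) that the associated tiling spaces are homeomorphic. The corollary is a purely formal consequence, and a proof of two or three lines citing the proposition above and the topological invariance of $\check{H}^\ast$ is all that is needed. In writing it up I would keep the proof to a single sentence, since expanding further would only repeat the statements of the cited results.
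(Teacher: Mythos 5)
Your proof is correct and matches the paper exactly: the paper derives the corollary immediately from the preceding proposition ($\Omega_\eta \cong \Omega_\phi$) together with the topological invariance of \Cech cohomology stated earlier in Section \ref{SEC:cohomology}. Nothing more is needed.
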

	As $\eta^n$ is left-proper, $\check{H}^1(\Omega_{\eta^n})\cong \varinjlim M_{\eta^n}^T \cong \varinjlim (M_{\eta}^n)^T \cong \varinjlim M_{\eta}^T$. Hence $\check{H}^1(\Omega_\phi)\cong \varinjlim M_\eta^T$.

	Grout outputs the pre-left-properization $\eta$, the left-properization $\eta^n$, the full properization $\eta^n \circ (\eta^n)^{(R)}$, and owing to the above, Grout also outputs the matrix $M_\eta^T$ in the cohomology section.

\begin{figure}[H]
\centerline{
\includegraphics[scale=0.5]{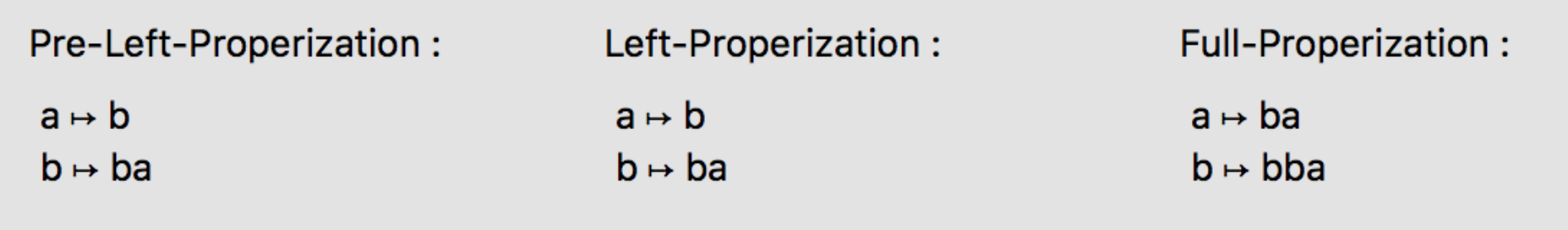}
}
\caption{The properization results for the substitution $a \mapsto b, b\mapsto ba$ in Grout.}
\end{figure}

\begin{remark}
If the substitution is already proper, the properization algorithm may return a different proper version of this substitution.  This may seem like it is a feature which has no use, but by iterating this process, we find that the sequence of substitutions is eventually periodic, first proved by Durand \cite{D:derived-seqs}. This sequence of properizations is itself of interest.
\end{remark}

\begin{figure}[H]
\centerline{
\includegraphics[scale=0.5]{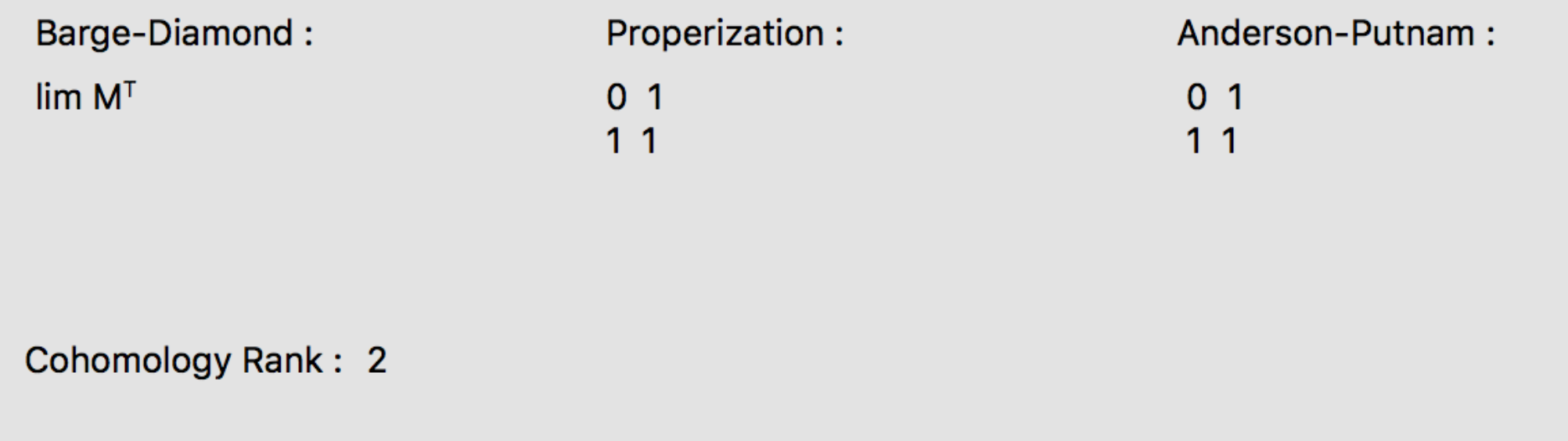}
}
\caption{The cohomology results for the substitution $a \mapsto b, b\mapsto ba$ in Grout.}
\end{figure}
\begin{example}\label{EX:TM-proper}
Let $\phi \colon 0 \mapsto 01, 1\mapsto 10$ be the Thue-Morse substitution. Recall from Example \ref{EX:TM-return} that the Thue-Morse substitution has return words $\mathcal{R}_{\phi,0} = \{0,01,011\}$ which we relabel as $\{[0] = i, [01] = j, [011] = k\}$. The induced substitution on the new alphabet is then given by
$$\eta \colon \left \{\begin{array}{rclclcl}
i & \mapsto & [\phi(0)] & = & [01] & = & j\\
j & \mapsto & [\phi(01)] & = & [0110] & = & ki\\
k & \mapsto & [\phi(011)] & = & [011010] & = & kji
\end{array}\right.,$$
giving us the pre-left-properization. We then calculate cohomology via Corollary \ref{COR:pre-left} to be $$\check{H}^1(\Omega_\phi) \cong \varinjlim M_\eta^T \cong \varinjlim \left[\begin{matrix}0&1&0\\1&0&1\\1&1&1\end{matrix}\right].$$
We invite the reader to check that the square of $\eta$ is a left-proper substitution and that $\eta^2 \circ (\eta^2)^{(R)}$ is a proper substitution.
\end{example}

\section{Pisot substitutions}\label{SEC:pisot}
In this section, we direct our attention to a special class of substitutions known as \emph{Pisot substitutions}. This class of substitutions has received considerable attention in the literature. For an overview of the various flavors of Pisot conjectures and a mostly up-to-date exposition of current knowledge, we refer the reader to the review article of Akiyama, Barge, Berth\'{e} and Lee \cite{ABBL:pisot}.

\begin{definition}
Let $\phi$ be a primitive aperiodic substitution on an alphabet $\mathcal{A}$ with $l$ letters and let $M_\phi$ be the associated substitution matrix with characteristic polynomial $q_{M_\phi}$ and PF-eigenvector $\lambda_{PF}$. We say that $\phi$ is a \emph{Pisot substitution} if for every root $\lambda \neq \lambda_{PF}$ of the minimal polynomial of $\lambda_{PF}$, we have $|\lambda|<1$. If we further have that $p_{M_\phi}$ is an irreducible polynomial over the rationals (equivalently over the integers) then we say that $\phi$ is an \emph{irreducible Pisot substitution}. Equivalently, $\phi$ is irreducible Pisot if every eigenvalue $\lambda \neq \lambda_{PF}$ of $M_\phi$ satisfies $0 < |\lambda| < 1$.
\end{definition}
Note that irreducible Pisot substitutions on two or more letters are necessarily primitive and recognizable.
\begin{example}
Some of the most well-known substitutions are irreducible Pisot:
\begin{itemize}
\item The Fibonacci substitution from Example \ref{EX:fibonacci} has eigenvalues $\lambda_{PF} = \tau$ and $\lambda_1 = \tau - 1$ where $\tau = (1+\sqrt{5})/2$ is the golden mean.

\item The \emph{silver mean substitution} \cite{BG:book} $\phi \colon 0 \mapsto 001, 1\mapsto 0$ with fixed point \seqnum{A171588}, (see also \seqnum{A049472}):
$$001001000100100010010010001 \cdots $$ has eigenvalues $\lambda_{PF} = 1+\sqrt{2}$ and $\lambda_1 = 1-\sqrt{2}$.
\item The \emph{Tribonacci substitution} \cite{R:rauzy-fractals} $\phi \colon 0 \mapsto 01, 1 \mapsto 02, 2 \mapsto 0$ with fixed point \seqnum{A092782}: $$01 02 01 0 01 02 01 01 02 01 0 01 02 01 02 \cdots $$ has eigenvalues $\lambda_{PF} \simeq 1.83929, \lambda_{1,2} \simeq -0.419643 \pm 0.606291 i$.
\item The \emph{flipped Tribonacci substitution} \cite{S:flipped-tribonacci} $\phi \colon 1 \mapsto 12, 2 \mapsto 31, 3 \mapsto 1$ with fixed point \seqnum{A100619}: $$123111212123112311231112123 \cdots$$ has the same eigenvalues as the Tribonacci substitution as they have equal substitution matrices.
\end{itemize}
\end{example}
The Tribonacci substitution and the flipped Tribonacci substitution have very different dynamical properties, even though they appear extremely similar in their presentations and share eigenvalues \cite{S:flipped-tribonacci}.
\begin{example}
The Thue-Morse substitution from Example \ref{EX:thue-morse} is a Pisot substitution as it has PF-eigenvalue $\lambda_{PF} = 2$ whose minimal polynomial is just $\lambda - 2$, and is therefore trivially Pisot. However, this is not an irreducible Pisot substitution because $p_{M_\phi} = \lambda(\lambda - 2)$ is reducible. Equivalently, because $0$ is also an eigenvalue of $M_\phi$, the substitution cannot be irreducible Pisot.
\end{example}
\begin{example}
The substitution $\phi \colon 0 \mapsto 001111, 1 \mapsto 001$ is not Pisot, as the characteristic polynomial of $M_\phi$ is $p_{M_\phi} = \lambda^2 - 3\lambda - 6$, which is irreducible but with both roots of modulus greater than $1$.
\end{example}

An important tool for studying primitive aperiodic symbolic substitutions is the associated dynamical spectrum of both its subshift and tiling space. A good reference text for such notions is the book by Baake and Grimm \cite{BG:book}. The notion of the dynamical spectrum for a compact measure dynamical system is beyond the scope of this work. Fortunately, there is a simple equivalent combinatorial definition of pure discrete spectrum for irreducible Pisot substitutions \cite{BSW:pds}. We adopt this equivalent notion as our definition below in Theorem \ref{THM:pure-discrete}.

\begin{remark}
We note that, due to results of Clark and Sadun \cite{CS:shape}, the subshift of a Pisot substitution has pure discrete spectrum if and only if its corresponding tiling space (with possibly non-unital tile lengths) has pure discrete spectrum. With this in mind, we only mention the dynamical spectrum of the subshift, confident that everything also applies to the tiling space.
\end{remark}

Let $\psi(u) \in \mathbb{Z}^l$ be the abelianization of the word $u$. That is, the $i$th coordinate of the vector $\psi(u)$ is the number of times the letter $i$ appears in the word $u$. We say that the pair of words $(u,v)$ are a \emph{balanced pair} if $\psi(u) = \psi(v)$. If $u$ and $v$ cannot be factored as proper factors $u = u_1u_2$ and $v = v_1v_2$ such that $(u_1,v_1)$ and $(u_2,v_2)$ are balanced pairs, then we say that $(u,v)$ is an \emph{irreducible balanced pair}. For every $i \in \Al$, we call the balanced pair $(i,i)$ a \emph{coincidence}. For pairs $(u_1,v_1)$ and $(u_2,v_2)$ we write $(u_1,v_1)(u_2,v_2) = (u_1u_2,v_1v_2)$ and call this \emph{concatenation of pairs}. We say that $(u_1,v_1)$ and $(u_2,v_2)$ are \emph{factors} of $(u_1u_2,v_1v_2)$. Clearly, every balanced pair can be factored uniquely into irreducible balanced pairs and if $(u,v)$ is a balanced pair, then $(\phi(u),\phi(v))$ is a balanced pair.

For a balanced pair $(u,v)$, let
$$
\begin{array}{rcl}
I(u,v) 	& = & \{(u_i,v_i) \mid \exists n\geq 0, \mbox{ such that }(u_i,v_i) \mbox{ appears as}\\
		&	& \mbox{an irreducible balanced pair factor of } (\phi^n(u),\phi^n(v))\}.
\end{array}
$$

\begin{definition}
Let $\phi$ be a primitive aperiodic substitution on an alphabet $\mathcal{A}$ with $l$ letters. If $I(u,v)$ is finite, we say that the balanced pair algorithm for $\phi$ \emph{terminates} on $(u,v)$. If $I(u,v)$ contains a coincidence, then we say that the balanced pair algorithm for $\phi$ \emph{terminates with coincidence} on $(u,v)$.
\end{definition}
\begin{example}\label{EX:Fib-coin}
Let $\phi \colon 0 \mapsto 01, 1 \mapsto 0$ be the Fibonacci substitution. Consider the balanced pair $(01,10)$. We find that $\left(\phi(01),\phi(10)\right) = (010,001) = (0,0)(10,01)$ and the next iterate also contains the coincidence $(1,1)$. It is clear that no new irreducible balanced pairs are found as factors of higher iterates of the substitution. It follows that $I(01,10)$ is given by $$I(01,10) = \{(0,0),(1,1),(01,10),(10,01)\},$$ which is both finite and contains a coincidence. Hence, $\phi$ terminates with coincidence on $(01,10)$.
\end{example}
The following theorem \cite{BSW:pds} gives a simple criterion in terms of the balanced pair algorithm for determining if the subshift of an irreducible Pisot substitution has pure discrete spectrum. We take this as our definition for pure discrete spectrum.
\begin{theorem}\label{THM:pure-discrete}
If $\phi$ is an irreducible Pisot substitution, then for any distinct pair of letters $i,j \in \Al$ the subshift $X_\phi$ has pure discrete spectrum if and only if $\phi$ terminates with coincidence on the balanced pair $(ij,ji)$.
\end{theorem}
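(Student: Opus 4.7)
The plan is to translate termination of the balanced pair algorithm into a geometric tiling condition on Rauzy fractals, and then invoke the standard fact that this tiling condition characterises pure discrete spectrum for irreducible Pisot substitutions.

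First I would set up the Rauzy fractals. The irreducible Pisot hypothesis gives an $M_\phi$-invariant splitting $\mathbb{R}^l = \mathbb{E}^u \oplus \mathbb{E}^s$, with $\mathbb{E}^u$ the one-dimensional Perron eigenspace and $\mathbb{E}^s$ an $(l-1)$-dimensional contracting subspace. For each letter $i \in \mathcal{A}$, define the Rauzy fractal $\mathcal{R}_i \subset \mathbb{E}^s$ as the closure of the projections $\pi^s(\psi(s_{-n} \cdots s_{-1}))$ taken over $s \in X_\phi$ with $s_0 = i$ and $n \geq 0$, where $\pi^s$ is the projection along $\mathbb{E}^u$. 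A well-known result of Pisot substitution theory says that $X_\phi$ has pure discrete spectrum if and only if the sets $\mathcal{R}_i$, together with translates by an appropriate lattice $\Lambda \subset \mathbb{E}^s$, tile $\mathbb{E}^s$ with pairwise disjoint interiors --- the \emph{geometric coincidence condition}.

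Second I would show that the balanced pair algorithm tracks potential overlaps in this tiling. A balanced pair $(u,v)$ with $u_0 = a$ and $v_0 = b$ encodes a pair of points in $\mathcal{R}_a \times \mathcal{R}_b$ with the same image in the quotient $\mathbb{E}^s / \Lambda$, i.e.\ a candidate overlap. Passing to $(\phi(u),\phi(v))$ and factoring into irreducibles tracks the evolution of these candidates under the contracting dynamics. The Pisot hypothesis forces non-Perron eigenvalues to have modulus strictly less than one, which both guarantees finiteness of $I(u,v)$ (abelianization differences of irreducible factors must live in a bounded ball of $\mathbb{E}^s$) and ensures that a coincidence $(i,i)$ appearing in $I(u,v)$ certifies that the candidate overlap is in fact a genuine agreement point. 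If $(ij,ji)$ produces a coincidence, primitivity then propagates this to every pair of letters --- every admitted two-letter word $ab$ occurs inside some $\phi^n(ij)$ --- killing all candidate overlaps and establishing the tiling condition.

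The principal obstacle will be formalising the bidirectional link between combinatorics and measure theory. The ``if'' direction reduces to showing that a single coincidence in $I(ij,ji)$ propagates under primitivity to force coincidence in $I(u,v)$ for every balanced pair; this uses minimality of the shift action from Proposition \ref{PROP:minimal} together with the multiplicativity of the balanced pair construction under $\phi$. The ``only if'' direction is more delicate: assuming pure discrete spectrum, one must extract from any termination-without-coincidence scenario a nested sequence of irreducible balanced pairs whose limit realises a genuine overlap of two distinct Rauzy tiles on a set of positive Lebesgue measure in $\mathbb{E}^s$, contradicting the measure-theoretic isomorphism between $X_\phi$ and its maximal equicontinuous factor. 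Carrying out this extraction --- in particular, showing that the resulting limit object gives a non-trivial measurable defect in the torus factor map --- is where the bulk of the technical work concentrates.
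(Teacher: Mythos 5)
The first thing to say is that the paper contains no proof of Theorem \ref{THM:pure-discrete} for you to be compared against: the statement is quoted from the literature (the reference \cite{BSW:pds} cited immediately before it), and the authors explicitly \emph{adopt it as their definition} of pure discrete spectrum. So within this paper the theorem is an imported black box, and your proposal is an attempt to reprove a deep external result rather than to reconstruct an argument the authors give.

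Judged on its own terms, your outline follows the standard strategy from the literature (Rauzy fractals, the geometric coincidence/overlap condition, and the dictionary between irreducible balanced pairs and tile overlaps), which is the right neighborhood, but two load-bearing steps are not sound as stated. First, the claim that the Pisot hypothesis ``guarantees finiteness of $I(u,v)$'' because abelianization differences lie in a bounded ball of $\mathbb{E}^s$ does not follow: boundedness of the projected prefix-difference vectors confines them to finitely many lattice points but does not bound the \emph{lengths} of irreducible balanced pairs, and termination of the balanced pair algorithm is not known to be automatic for irreducible Pisot substitutions---it is part of what the criterion tests, not a free consequence of Pisot-ness. Second, in the ``if'' direction your propagation step conflates factors of $\phi^n(ij)$ with irreducible balanced-pair factors of $(\phi^n(ij),\phi^n(ji))$; an arbitrary balanced pair $(u,v)$ need not arise as a factor of any iterate of $(ij,ji)$, so primitivity alone does not show that coincidence for this one pair kills all candidate overlaps. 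Closing these gaps (together with the measure-theoretic extraction you correctly flag in the ``only if'' direction) is essentially the content of the cited source, and the appropriate move here is to cite it rather than to sketch it.
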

 From Example \ref{EX:Fib-coin}, the above theorem tells us that the Fibonacci substitution has pure discrete spectrum.

The following notion of strong coincidence is different from the balanced pair algorithm terminating with coincidence, though they are intimately related (and possibly equivalent) \cite{BD:pisot-coincidence}.
\begin{definition}
Let $\phi$ be a primitive aperiodic substitution on an alphabet $\mathcal{A}$ with $l$ letters. For a distinct pair of letters $i,j \in \mathcal{A}$, if there exists $n \geq 1$ and a letter $k \in \mathcal{A}$ such that $\phi^n(i) = ukv$, $\phi^n(j) = \tilde{u}k\tilde{v}$ and $\psi(u) = \psi(\tilde{u})$ then we say that $\phi$ has a \emph{coincidence} for the pair $i,j$. If $\phi$ has a coincidence for every pair of letters $i,j \in \mathcal{A}$ then we say that $\phi$ is \emph{strongly coincident}.
\end{definition}
\begin{example}
Let $\phi \colon 0 \mapsto 10, 1 \mapsto 0$ be the reversed Fibonacci substitution. Note that $\phi^3(0) = (10)(0)(10)$ and $\phi^3(1) = (01)(0)()$, where we have used brackets to partition the substituted words appropriately. So, $\phi$ has a coincidence for the pair $0,1$ (with $\psi(u) = \psi(\tilde{u}) = (1,1)$, $k=0$). As $\mathcal{A} = \{0,1\}$, we have checked all distinct pairs of letters. It follows that the reversed Fibonacci substitution is strongly coincident.
\end{example}
Note that, unlike in the above example, the ordinary Fibonacci substitution reaches a coincidence for the pair $0,1$ after just the first iteration. This is an artifact of the asymmetric definition for a substitution to have a coincidence.

Solomyak has shown that if the subshift of a primitive substitution has pure discrete spectrum, then the substitution must be Pisot \cite{S:pisot}. We are concerned with the converse of this statement in the case of irreducible Pisot substitutions and a close variant concerning strong coincidence\footnote{Although the first of these conjectures is referred to just by the name \emph{Pisot}, the conjecture is sometimes referred to as the \emph{Pisot substitution conjecture} to distinguish it from conjectures in other areas adopting the same name.}.
\begin{conj}[Pisot conjecture]
The subshift of every irreducible Pisot substitution has pure discrete spectrum.
\end{conj}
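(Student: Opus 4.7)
Since the Pisot conjecture is a long-standing open problem, rather than a routine proof I can only outline a plan of attack that mirrors the standard approach in the literature. The plan would be to pass from the dynamical statement (pure discrete spectrum) to a purely combinatorial one via Theorem \ref{THM:pure-discrete}. Fix an irreducible Pisot substitution $\phi$ on $\mathcal{A}$ with $|\mathcal{A}|=l$ and PF-eigenvalue $\lambda_{PF}$. It suffices to show that for every distinct pair $i,j\in\mathcal{A}$ the balanced pair algorithm starting at $(ij,ji)$ produces only finitely many irreducible balanced pairs and that at least one of them is a coincidence $(k,k)$.

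First I would try to reduce termination-with-coincidence to strong coincidence. Since $\phi$ is irreducible Pisot, there are $l-1$ contracting algebraic conjugates of $\lambda_{PF}$, which can be assembled into the \emph{contracting subspace} $E^c \subset \mathbb{R}^l$ of the substitution matrix acting on $\mathbb{Z}^l$. For any balanced pair $(u,v)$, the height vector $\psi(u) - \psi(v) = 0$ by definition, but any \emph{proper prefix difference} $\psi(u_1) - \psi(v_1)$ lies in the kernel of projection onto the PF-eigenvector direction and so projects into $E^c$. The key idea is to model each irreducible balanced pair by its prefix-difference trajectory in $E^c$; primitivity together with the Pisot condition forces these vectors to be uniformly bounded in $E^c$. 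This should give a finite set of irreducible balanced pairs that can appear under iteration, establishing termination of the algorithm. Showing at least one coincidence requires in addition a strong coincidence step: given primitivity, iterate until the prefix-difference vector becomes so short that a letter must be shared at the aligned position.

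The key intermediary I would introduce is the \emph{Rauzy fractal} $\mathcal{R}\subset E^c$, obtained as the closure of the set of such projected prefix vectors for a fixed point of $\phi$. The Pisot conjecture is known to be equivalent to $\mathcal{R}$ (together with its pieces indexed by letters) tiling $E^c$ periodically by a lattice. Hence after the combinatorial setup I would try to show that irreducibility plus primitivity force the subpieces $\mathcal{R}_i$ to have disjoint interiors and tile; the balanced pair $(ij,ji)$ algorithm then corresponds to tracking the overlaps of translated Rauzy fractal pieces, and coincidences correspond to exact overlaps. Partial results of Arnoux--Ito, Barge--Kwapisz, and Akiyama--Barge--Berth\'e--Lee (cited in \cite{ABBL:pisot}) resolve many sub-cases this way, in particular the two-letter case and the case of $\beta$-substitutions.

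The main obstacle, and the reason the conjecture remains open, is the absence of a general argument that the Rauzy fractal pieces never overlap on a set of positive Lebesgue measure. Equivalently, one must rule out pathological infinite orbits of the balanced pair algorithm in which prefix-difference vectors remain bounded in $E^c$ but never collapse to the origin. For $l\geq 3$ the contracting subspace has real dimension at least two and the overlap geometry can become arbitrarily intricate; no purely combinatorial or purely algebraic invariant has yet been identified which obstructs such pathologies in general. This is precisely the step where my plan, like all existing attacks, would stall, and where tools such as Grout become valuable: by running the balanced pair algorithm and the strong coincidence check on systematic families of candidate substitutions (as done in Section \ref{SEC:pisot}), one can search for a counterexample whose existence would refute the conjecture or, failing that, accumulate computational evidence in its favor.
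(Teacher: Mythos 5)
The statement you were asked about is labelled a \emph{conjecture} in the paper, and the paper offers no proof of it: the Pisot conjecture is a well-known open problem, and the authors' contribution in Section \ref{SEC:pisot} is only a computational search for counterexamples to the related strong coincidence conjecture (Conjecture \ref{scoin}), together with a survey of the partial results of Hollander--Solomyak, Barge--Diamond, Barge, Akiyama--G\"{a}hler--Lee, and Berth\'{e}--Jolivet--Siegel. You correctly recognized this and declined to claim a proof, which is the right call; there is therefore no proof in the paper to compare your plan against.

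As a survey of the standard attack your outline is broadly accurate: reduce pure discrete spectrum to termination with coincidence of the balanced pair algorithm via Theorem \ref{THM:pure-discrete}, project prefix-difference vectors into the contracting hyperplane of $M_\phi$ where the Pisot property gives boundedness, and reinterpret the whole question as non-overlapping of Rauzy fractal subpieces. Two cautions. First, boundedness of the projected prefix differences gives you finiteness of the set of \emph{candidate} irreducible balanced pairs only after you also control their lengths; the genuinely hard step, as you note, is forcing a coincidence rather than an infinite bounded orbit, and no known argument does this in general for $l \geq 3$. Second, be careful with the logical direction between strong coincidence and pure discrete spectrum: the paper explicitly states that it is not known in general whether strong coincidence implies pure discrete spectrum (this implication is only established on two letters, by Hollander--Solomyak), so ``reducing termination-with-coincidence to strong coincidence'' would not by itself close the argument even if strong coincidence were proved. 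Your plan stalls exactly where the field does, so there is no error to report beyond the fact that the statement remains unproven.
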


\begin{conj}[Strong coincidence conjecture]\label{scoin}
Every irreducible Pisot substitution is strongly coincident.
\end{conj}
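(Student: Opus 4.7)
The plan is to approach the conjecture in two parallel fronts: a broad computational verification using the tools developed earlier in Grout, and a structural argument that tries to leverage what is already known. First, I would note that the two-letter case is classical and already settled, so the first genuinely new territory begins with alphabets on three letters. For the computational front, I would exhaustively enumerate irreducible Pisot substitutions on three and four letters, bounded by some maximum total length $\sum_i |\phi(i)|$, and for each pair $i,j \in \mathcal{A}$ iterate $\phi^n(i)$ and $\phi^n(j)$ letter by letter, scanning for a position $p$ at which both words share the same symbol $k$ and whose length-$p$ prefixes share the same abelianization $\psi$. Primitivity together with the Perron--Frobenius structure of $M_\phi$ guarantees that the frequencies of each letter in $\phi^n(i)$ and $\phi^n(j)$ converge to the same limiting vector $\mathbf{r}_{PF}$, so heuristically one expects such a coincidence to appear quickly if one exists; one therefore needs only to bound $n$ carefully to guarantee termination of the check.

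For a theoretical attack, I would first try to reduce to the two-letter case. Given a pair $i,j \in \mathcal{A}$, one approach is to consider the induced substitution on the two-symbol alphabet $\{i,j\}$ obtained by replacing all other letters by appropriate return words, analogous to the properisation procedure of Section \ref{SEC:proper}. If this induced system inherits enough of the Pisot structure of $\phi$ to invoke the two-letter case, strong coincidence on $(i,j)$ would follow. A more geometric strategy is to invoke the \emph{Rauzy fractal} $\mathcal{R}_\phi$ associated with an irreducible Pisot $\phi$: the subpieces $\mathcal{R}_i, \mathcal{R}_j$ are self-similar compacta of positive Lebesgue measure in a suitable contracting subspace, and strong coincidence for the pair $(i,j)$ is equivalent to the assertion that $\mathcal{R}_i$ and $\mathcal{R}_j$ have a common interior point (up to a lattice translate determined by the abelianization). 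Thus the task reduces to showing these subtiles overlap, which one might hope to extract from measure-theoretic or topological arguments about the Rauzy subdivision.

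The main obstacle is that the strong coincidence conjecture is a well-known open problem, closely intertwined with the Pisot substitution conjecture itself, and no general proof is currently known. A pure combinatorial induction on the alphabet size appears to break down because the induced two-letter system need not remain Pisot or even primitive in a usable way, and the Rauzy fractal reformulation, while elegant, essentially converts the combinatorial obstruction into a delicate geometric overlap problem that is no easier to settle in general. The theoretical step I would most expect to get stuck on is verifying that the reduced two-letter system retains enough rigidity to feed into the known result. Given this, the realistic deliverable of the present work is the computational one: to verify the conjecture for as large a class of three- and four-letter irreducible Pisot substitutions as Grout can handle, thereby providing strong empirical evidence while flagging any anomalies that could guide a future structural proof.
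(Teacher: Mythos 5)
This statement is a conjecture, and the paper offers no proof of it; its actual contribution is precisely the computational search you describe, namely enumerating three- and four-letter substitutions in a fixed ordering, filtering for irreducible Pisot, and verifying strong coincidence iterate by iterate (finding no counterexample among roughly $12.6$ million three-letter and $15{,}001$ four-letter irreducible Pisot substitutions). Your proposal correctly identifies the problem as open, correctly flags that the structural reductions (to two letters, or to overlap of Rauzy subtiles) do not go through, and lands on essentially the same deliverable as the paper; the only caveat worth noting is that the iteration count needed for a coincidence is not known to be bounded a priori (the paper's data tops out at $10$ iterations but the authors explicitly say no bound is proved), so the check terminates only empirically, not provably.
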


It is not known, in general, if the subshift of every strongly coincident substitution has pure discrete spectrum. If this is the case, then the Pisot conjecture is equivalent to the strong coincidence conjecture.

Other than in specific cases of single substitutions, there are relatively few families of substitutions for which we know that the Pisot conjecture holds:
Hollander and Solomyak have shown that for two-letter Pisot substitutions, the strong coincidence conjecture is equivalent to the Pisot conjecture \cite{HS:pisot};
Barge and Diamond had previously shown that the strong coincidence conjecture is true for two-letter substitutions and hence that the Pisot conjecture holds for all substitutions on two letters \cite{BD:pisot-coincidence};
Barge has shown that the Pisot conjecture holds for substitutions of $\beta$-type \cite{B:pisot-beta-subs}; Akiyama, G\"{a}hler and Lee have shown via an exhaustive search that the Pisot conjecture holds for all three-letter substitutions whose substitution matrices have trace at most 2 \cite{AGL:pisot};
Berth\'{e}, Jolivet, and Siegel have shown that the Pisot conjecture holds for substitutions of Arnoux-Rauzy type \cite{BJS:arnoux-rauzy};
Barge has shown that the Pisot conjecture holds for all substitutions that are left-proper and right-bijective (or vice-versa) \cite{B:proper-bijective};
Barge, Bruin, Jones and Sadun have shown that a na\"{i}ve homological analogue of the Pisot conjecture admits counterexamples for all algebraic degrees, but that the coincidence rank conjecture (a variant of the homological Pisot conjecture relating the norm of the PF-eigenvalue and the \emph{coincidence rank} of the substitution) holds for any substitution whose PF-eigenvalue has algebraic degree 1 \cite{BBJS:homological-pisot}.

\subsection{Computational search for counterexamples}
Perhaps the largest computer search for counterexamples to either conjecture in the literature appears in work of Akiyama, G\"{a}hler and Lee \cite{AGL:pisot}, where all 446,683 irreducible Pisot substitutions on three letters whose substitution matrix has trace at most 2 were checked for counterexamples to the Pisot conjecture.  Instead, we choose to focus our search for counterexamples to the strong coincidence conjecture. We use the algorithms developed for Grout, as well as standard algorithms from linear algebra, to calculate eigenvalues and check for irreducibility over $\mathbb{Z}$ \cite{L:factorising}, in order to do a large scale search for counterexamples to the strong coincidence conjecture. For our search, we limit to substitutions over three or four letters, where irreducibility can be checked in a combinatorial fashion instead of full implementation of the irreducibility algorithms.

We would like to highlight a subtlety encountered while implementing the check that involved a deterministic check for when eigenvalues have modulus exactly 1 or are instead very close but strictly within or without the unit disc. Algorithms for finding eigenvalues are typically numeric and involve a converging sequence of approximate solutions to the true solution. In order for the process to halt in finite time, an acceptance threshold needs to be set. This is problematic when one of the conditions that needs to be checked is that an eigenvalue does not lie on the boundary of the unit disc. This problem was overcome by implementing an algorithm that was brought to our attention on Stack Exchange \cite{R:stack-exchange}. Once it is determined that an eigenvalue does not lie on the boundary of the unit disc, the numerical process can be sure to satisfy a threshold around the approximate solutions which is either fully within or fully outside the unit disc, and so the algorithm halts with the correct evaluation of $|\lambda| < 1$, $|\lambda| = 1$, or $|\lambda| > 1$.

We place an ordering on substitutions over the same alphabet (modulo the trivial equivalences given by reversing and letter permutations) via the lexicographical ordering of the words $(\phi(1), \phi(2) ,\ldots, \phi(l))$. For example, $(0,0,0) \prec (0,0,1) \prec (0,2,0) \prec (00,0,0)$. We then methodically generate the substitutions in this list. Sample sizes were limited by allotted computation times; 24 hours for each run on a high performance computing cluster at the University of Leicester, UK.
\begin{res}[Three letters]
The first 321,425,442 three-letter substitutions were checked, of which 12,573,955 were irreducible Pisot.  All of these substitutions passed the strong coincidence check, and therefore no counterexample to the strong coincidence conjecture was found.  During these checks, the number of iterations of the substitution that it took for a strong coincidence to be found was recorded. This is summarized in Table \ref{Tab:strong-coincidence}.

\begin{table}[H]
\centering

\begin{tabular}{c|c}

\textbf{Number of Iterations} & \textbf{Number of Substitutions} \\ \hline

1                             & 1,921,144                        \\ 
2                             & 5,778,331                        \\ 
3                             & 3,777,324                        \\ 
4                             & 984,669                          \\
5                             & 105,325                          \\
6                             & 6,608                            \\
7                             & 512                              \\
8                             & 37                               \\
9                             & 3                                \\
10                            & 2                                \\
11+                           & 0                                \\
\end{tabular}
\caption{Number of iterations to produce a strong coincidence.}\label{Tab:strong-coincidence}

\end{table}
\end{res}
The substitutions that took 10 iterations to form their first coincidence for all pairs of letters were
$$
\begin{array}{rlrlrl}
0 \mapsto & 2011   & 1 \mapsto & 02 & 2 \mapsto & 0, \\
0 \mapsto & 212101 & 1 \mapsto & 0  & 2 \mapsto & 1.
\end{array}
$$
The result suggests a likelihood that there is no bound on the number of iterations necessary for a strongly coincident substitution to form its first coincidence for all pairs, although a proof of this statement is not known to us. Unfortunately, sampling the substitutions formed which produced a new `record' as the program ran did not provide any clear pattern that one might use to produce an infinite family of substitutions who each require a successively larger number of iterations.
\begin{res}[Four letters]
The first 200,399 four-letter substitutions were checked, of which 15,001 were irreducible Pisot. All of these substitutions passed the strong coincidence check, and therefore no counterexample to the strong coincidence conjecture was found.
\end{res}

\section{Supplementary resources}
Grout is available to download for Windows and Mac OSX along with the supporting documentation at the following URL:

\vspace{3mm}

\centerline{\url{https://github.com/GroutGUI/Grout}}

\section{Acknowledgments}
This research used the SPECTRE High Performance Computing Facility at the University of Leicester.

The authors would like to thank Fabien Durand for his helpful advice in piecing together a robust recognizability algorithm, and Etienne Pillin for helpful comments with regards to the coding.  We would also like to thank Alex Clark, Greg Maloney and Jamie Walton for helpful suggestions during the development of Grout and for testing early versions of the program, and Marcy Barge for suggestions and discussions relating to the section on Pisot substitutions.

\bibliographystyle{jis}
\bibliography{bib-grout}
\bigskip
\hrule
\bigskip

\noindent 2010 {\it Mathematics Subject Classification}: Primary 37B10; Secondary 52C23, 54H20, 55N05, 68R15.

\noindent \emph{Keywords:} Tiling spaces, Symbolic dynamics, \Cech cohomology, Pisot, Substitutions.

\bigskip
\hrule
\bigskip

\noindent
(Concerned with sequences
\seqnum{A001285},
\seqnum{A003842},
\seqnum{A003849},
\seqnum{A010059},
\seqnum{A010060},
\seqnum{A014577},
\seqnum{A014709},
\seqnum{A014710},
\seqnum{A020985},
\seqnum{A035263},
\seqnum{A049320},
\seqnum{A049321},
\seqnum{A049472},
\seqnum{A073057},
\seqnum{A092782},
\seqnum{A096268},
\seqnum{A096270},
\seqnum{A100260},
\seqnum{A100619},
\seqnum{A106665},
\seqnum{A171588},
\seqnum{A275202}, and
\seqnum{A275855}.)

\bigskip
\hrule
\bigskip

\end{document}